\newtheorem{thm}{Theorem}
\newtheorem{remark}{Remark}
\newtheorem{prop}{Proposition}
\newtheorem{lem}{Lemma}
\newtheorem{condition}{Condition}
\newtheorem{example}{Example}
\newcommand{\cov}{\mathrm{cov}}
\def\T{{\ensuremath{\mathsf{T}}}}
\newcommand{\I}{\mathcal{I}}
\newcommand{\F}{\mathcal{F}}
\newcommand{\p}{\partial}
\newcommand{\X}{\mathbf{X}}
\newcommand{\x}{\mathbf{x}}
\newcommand{\E}{\mathbb{E}}
\newcommand{\R}{\mathbb{R}}
\newcommand{\Rp}{\mathbb{R}_{+}}
\newcommand{\y}{\mathbf{y}}
\newcommand{\calf}{\mathcal{F}}
\begin{document}

\title{Asymptotic Distribution of the Score Test for Detecting Marks in Hawkes Processes}

\author{Simon Clinet\footnote{Faculty of Economics, Keio University. 2-15-45 Mita, Minato-ku, Tokyo, 108-8345, Japan. Phone:  +81-3-5427-1506. E-mail: clinet@keio.jp, website: http://user.keio.ac.jp/\char`\~clinet/}, William T.M. Dunsmuir\footnote{Corresponding Author, School of Mathematics and Statistics, University of New South Wales, Sydney, NSW 2052, Australia. E-mail: W.Dunsmuir@unsw.edu.au, website: https://research.unsw.edu.au/people/professor-william-t-m-dunsmuir.},\\ Gareth W. Peters\footnote{Department of Actuarial Mathematics and Statistics, Heriot-Watt University, Edinburgh, UK. E-mail: g.peters@hw.ac.uk website: https://www.hw.ac.uk/staff/uk/macs/gareth-peters.htm.}, Kylie-Anne Richards\footnote{UTS Business School, University of Technology Sydney, Sydney, Australia. E-mail: Kylie-Anne.Richards@uts.edu.au,  website: https://www.uts.edu.au/staff/kylie-anne.richards-0. } }
\maketitle

\begin{abstract} 
The asymptotic distribution of the score test of the null hypothesis that marks do not impact the intensity of a Hawkes marked self-exciting point process is shown to be chi-squared. For local asymptotic power, the distribution against local alternatives is also established as non-central chi-squared. These asymptotic results are derived using existing asymptotic results for likelihood estimates of the unmarked Hawkes process model together with mild additional conditions on the moments and ergodicity of the marks process and an additional uniform boundedness assumption, shown to be true for the exponential decay Hawkes process.
\end{abstract}
\noindent%
{\it Keywords:} Marked Hawkes point process; Ergodicity; Quasi likelihood; Score test; Inferential statistics; Local power.
\newpage
\section{Introduction}\label{Sec: Introduction}
Since their introduction over fifty years ago Hawkes self exciting process models \citep{Hawkes1971} have been used to model point processes in many fields of application including seismology \citep{Ogata1988_PowerLawKernel}, sociology \citep{Crane2008_HawkesSocial}, modelling neuronal systems and increasingly in recent years for modelling high frequency financial trading 
(for a general review, see \cite{bacry2015hawkes,Hawkes2018_FinanceReview}). Extensions of the Hawkes process where parameters are time-varying and replicate the non-stationarity of intraday financial data have also been considered in \cite{chen2013inference, clinet2018statistical} for example. The theoretical properties of such models are quite well advanced as is estimation methodology and its associated statistical theory. Increasingly \textit{marked} Hawkes processes, in which marks attached to past event times influence future intensities, are being considered for a range of applications. For example, \cite{RichDunsmuirPeters} consider the use of marked Hawkes processes for modelling millisecond recordings of activity in the limit order book for a range of assets traded on international futures markets. In these applications there are numerous potential marks that are recorded at each event and a method is required to efficiently screen out those that are not influential on future event arrival intensities before the joint models for the event times and associated marks are estimated. 

Assessment of influential marks could be done by simultaneously estimating the parameters of the marked Hawkes process and then assessing them for statistical significance. Even if there is a single scalar valued mark included in the model estimation using, say, maximum likelihood methods, there are computational challenges. When numerous marks are jointly included in the model these challenges are substantial. The use of likelihood for the marked Hawkes process model, if feasible, would allow use of standard inferential techniques such as the Wald or likelihood ratio test for testing the significance of marks impact.  However, the relevant statistical inference methods and theory for marked Hawkes processes are not well developed at this time. 

An alternative approach for assessing the impact of marks on intensity is the score test as proposed in \cite{RichDunsmuirPeters} where details of computational implementation, simulation and application to limit order book event series is presented. As is well known, this test, also known as the Lagrange Multiplier test \citep{BreuschPaganLMtest}, is computed using the score of the likelihood evaluated under the null hypothesis, which, in this application, is that marks do not impact the intensity, so that the event times are that of an unmarked Hawkes process. The score statistic can then be constructed easily based on a single fitted intensity for an unmarked process. Because of this, the score test leads to substantial computational advantages particularly when relevant and significant marks need to be selected from a possibly large catalogue before requiring the effort of jointly fitting the marked process model.  Apart from the obvious computational advantage afforded by using a single Hawkes model fit, the score test large sample distribution theory can be derived using existing asymptotic theory for \textit{unmarked} Hawkes processes as is explained below. In this paper the large sample distribution of the score test under the null hypothesis that the mark or marks under test do not boost the intensity of events is shown to be the standard chi-squared distribution with appropriate degrees of freedom. We also show that the power against local alternatives is non-central chi-squared.

In the literature on the theory and inferential methods for marked Hawkes processes focus has been on the situation where marks are unpredictable (in a sense to be detailed below), such as when they are independent and identically distributed. Our extensive experience with applications in high frequency financial data suggests that influential marks display serial dependence in addition to cross dependence between the marks. Accordingly, we derive the asymptotic distribution of the score test for serially dependent multivariate valued marks. This requires us to show that stable marked Hawkes process exist when the marks are stationary serially dependent, something that is not currently available in the literature. 

From now on we consider a univariate Hawkes self exciting marked point process (SEPP) $N_{g} \in \mathbb{N}\times\mathbb{X}$, observed over the interval $t \in [0,T]$ and which takes the value $0$ at $t=0$. There are $N_T$ events observed in the interval $[0,T]$ at times $0<t_1 <t_2< \ldots <t_{N_T} \le T$ and a vector of $d$ marks $\X_i \in \mathbb{X}\subset\mathbb{R}^d$ is associated with the $i$th event. The observed points of this process are $\{(t_i,\x_i), i = 1,\dots,N_T\}$. In \cite{RichDunsmuirPeters} relevant marks constitute a vector of correlated marks which are also serially dependent. In order to accommodate such examples we explain how to define a marked Hawkes process with serially dependent marks, give conditions for stationarity of the point process, and, define the relevant quasi-likelihood. 

Following \cite{liniger2009multivariate} and \cite{embrechts2011}, with modifications to notation as used in \cite{ClinetYoshida2017}, let the marked Hawkes SEPP have intensity process given by 
\begin{equation} \label{Eqn: Intensity Process}
\lambda_g(t;\theta,\phi,\psi) = \eta + \vartheta \int_{[0,t)\times\mathbb{X}} w(t-s;\alpha) g(\x;\phi,\psi)N_g(ds \times d\x)
\end{equation}
where $w$ is a non-negative decay function satisfying
\begin{equation}\label{Eqn: integral omega, s omega} 
\int_{0}^{\infty}w(s;\alpha)ds=1, \quad \int_{0}^{\infty}sw(s;\alpha)ds < \infty.
\end{equation}
The immigration rate is $\eta$, the branching coefficient $\vartheta$ and the parameter $\alpha$, not necessarily scalar, specifies the decay function $w$. The marks $\X$ have density $f(\x;\phi)$ (w.r.t. Lebesgue measure) and impact the intensity through the scalar valued boost function $g(\X;\phi,\psi)$, where $g(\cdot;\phi,\psi): \R^d\to \R^{+}$ and $\psi$ is a vector parameter of length $r$ specifying the way in which marks enter the boost function. In addition, $g$ depends on the parameters of the marks density, $\phi$, because the normalization $\E_{\phi}[g(\X;\phi,\psi)]=1$ is required to obtain a stationary solution to \eqref{Eqn: Intensity Process} as in \cite{embrechts2011}. In Section \ref{SubSec: Existence of Stable Marked Hawkes} we show, for the case where the marks are i.i.d as considered in \cite{embrechts2011}, that under this normalization a stationary solution exists and, for serially dependent marks under a stronger condition on the conditional expectation of $g(\x;\phi,\psi)$ given the past, this is also true.

Henceforth, let $\theta=(\eta, \vartheta,\alpha)\in \Theta$, $\phi\in \Phi$ and $\psi \in \Psi$ for some parameter spaces $\Theta$, $\Phi$ and $\Psi$. Let $\nu = (\theta,\phi,\psi) \in \Theta \times \Phi \times \Psi$ be the collection of all parameters for the marked process with intensity function \eqref{Eqn: Intensity Process}. We denote the true value of the parameters as $(\theta^{*}, \phi^{*}, \psi^*)$.
When the null hypothesis holds, the true parameter vector is denoted $\nu^{*}=(\theta^{*},\phi^{*},0)$ and it is assumed that $g(\x;\phi,0)\equiv 1$ so that the intensity in \eqref{Eqn: Intensity Process} does not depend on the marks. Since the score test will be developed under the null hypothesis we only require details of $\Theta$ in the derivation and theory to follow. Specifically we let $\Theta$ be a finite dimensional relatively compact open subset of $\R^K$, $K>1$. The parameter space $\Phi$ for the marks density will typically be the natural space of parameters for the specified density and the boost parameter space $\Psi$ is chosen as appropriate for the form of $g$.

Quite general normalized boost functions, $g(\X;\phi,\psi)$ can be constructed by starting with a function $h(\X,\psi)$ and defining the boost function
\begin{equation}\label{Eqn: Boost function g}
g(\X;\phi,\psi)= \frac{h(\X;\psi)}{\mathbb{E}_{\phi}[h(\X;\psi)]}.
\end{equation}
It is no loss of generality to require $h(\X; 0) \equiv 1$.
Many examples of boost functions including the polynomial, exponential and power functions forms for $h$, as presented in \cite{liniger2009multivariate}, as well as additive and multiplicative combinations of individual elements of $\X$ used in \cite{RichDunsmuirPeters} can be formulated in this way. The null hypothesis being assessed with the score test is $H_0: \psi=0$, which is equivalent to $g(\x;\phi,0) = 1$ so that marks do not boost intensity.

Under $H_0$  the observed event times are those of an unmarked Hawkes SEPP, $N$ with intensity denoted by 
\begin{equation} \label{Eqn: Intensity Process Null Hyp}
\lambda(t;\theta) = \eta + \vartheta \int_{[0,t)} w(t-s;\alpha)N(ds).
\end{equation}
We assume that the initial value of the intensity is $\lambda(t_0)= C_0$ for some specified value of $C_0$;  for example $C_0=\mathbb{E}[\lambda(t)]=\eta/(1-\vartheta)$, the theoretical long run average for a stationary Hawkes process \citep{laub2015hawkes}. Note also that this intensity function is not defined using events prior to the observation period, that is for $t<0$, because in practice \eqref{Eqn: Intensity Process} is used for computation of the likelihood. The intensity process defined in \cite{embrechts2011} is the stationary version with infinite, but unobserved, event history included. In \cite{bremaud1996} the authors show that a suitable probability space exists on which a stationary version on $\R$, $N^{\infty}$, can be defined and to which the non-stationary version in \eqref{Eqn: Intensity Process} converges. We assume that $N^{\infty}$ is ergodic and this is proven in \cite{ClinetYoshida2017} for the exponential decay case. \cite{ogata1978asymptotic} considers both the stationary version of the intensity process and the non-stationary version as in \eqref{Eqn: Intensity Process Null Hyp} along with the associated likelihoods.

The remainder is organized as follows. Section \ref{SubSec: Existence of Stable Marked Hawkes} proves (see Proposition \ref{propExistence}), via a thinning construction, that a stationary marked Hawkes process can be constructed when the marks are observed from a continuous time stationary process and gives several examples of processes for which the conditions are met. Section \ref{Sec: Quasi-likelihood} extends the definition of the joint likelihood of event times and marks beyond the i.i.d. case currently available in the literature to marks which are serially dependent. Section \ref{Sec: The Score Test} defines the score test in detail. Section \ref{Sec: Asymptotic Dist Score Test} states the main result (see Theorem \ref{Thm: Score Statistic Asymptotic Chi-squared}) that the score statistic is asymptotically chi-squared distributed under the null hypothesis that marks do not impact the intensity function. For this result, in addition to the conditions of \cite{ClinetYoshida2017} for the consistency and asymptotic normality of the unmarked process, conditions are required on the existence of moments and ergodicity of the mark process itself together with an additional condition (Condition \ref{Cond: Condition on sup lambda_g}) which links the marks and the unmarked intensity process. Lemma \ref{Lem 1} shows that the Condition \ref{Cond: Condition on sup lambda_g} is satisfied for the case of exponential decay function $w$. Section \ref{Sec: Local Power} proves that the score statistic is asymptotically non-central chi-squared distributed under local alternatives of the form $\psi_T^*=\gamma^*/\sqrt{T}$ where $T \to \infty$ and $T$ is the length of the interval over which the point process is observed. Section \ref{Sec: Conclusions etc} discusses possible extensions to the main results. The Appendices contain proofs.

\section{Existence of a stable marked Hawkes process}\label{SubSec: Existence of Stable Marked Hawkes}

Assume the existence of a probability space $(\Omega, \calf, \mathbb{P})$ bearing a continuous time process $(\y_t)_{t \in \mathbb{R}}$ taking values in the Borel space $(\mathbb{X}, \mathcal{X})$ (meaning that there exists a bijection $h$ between $\mathbb{X}$ and $[0,1]$ such that $h$ and $h^{-1}$ are measurable, see \citep[p.7]{kallenberg2006foundations}). We define $(\calf_t^\y)_{t \in \mathbb{R}}$ the canonical filtration of $\y$. We now give a proof of the existence of the marked Hawkes process using a thinning method similar to \citep[Chapter 6]{liniger2009multivariate} and \cite{bremaud1996}. To that end, we assume the existence on $(\Omega,\calf,\mathbb{P})$ of a Poisson process $\overline{N}$ with intensity $1$ on $\mathbb{R}^2$, with points denoted $(t_i,u_i)_{i \in \mathbb{Z}}$ and independent of $\y$. Then we consider the canonical process $(t_i, u_i, \y_{t_i})_{i \in \mathbb{Z}}$ and see this process as a random measure $\overline{N}_g$ on $\mathbb{R}^2 \times \mathbb{X}$. We associate to $\overline{N}$ the filtration generated by the $\sigma$-algebras $\calf_t^{\overline{N}}=\sigma \{ \overline{N}((-\infty,s] \times A), s \in (-\infty,t], A \in \mathcal{B}(\mathbb{R})\}$, where $\mathcal{B}(\mathbb{R})$ is the Borel $\sigma$-field of $\mathbb{R}$. Similarly, we associate to $\overline{N}_g$ the filtration generated by the $\sigma$-algebras $\calf_t^{\overline{N}_g}=\sigma \{ \overline{N}_g((-\infty,s] \times A \times B), s \in (-\infty,t], A \in \mathcal{B}(\mathbb{R}), B \in \mathcal{X}\}$. Note that $\overline{N}(dt \times du) = \overline{N}_g(dt \times du \times \mathbb{X})$.  Consider the filtration generated by $\calf_t = \calf_t^{\y} \vee \calf_t^{\overline{N}}$. In Proposition \ref{propExistence} below, we show the existence of a marked point process $N_g$ adapted to $\calf_t$, satisfying (\ref{Eqn: Intensity Process}), and which is constructed as an integral over the canonical measure $\overline{N}_g$. 

Before we state our result, we recall that a marked point process of the form $(\tau_i, \y_{\tau_i})_{i \in \mathbb{Z}}$ is stationary if for any $t \in \mathbb{R}$, it has the same distribution (seen as a random measure) as $(\tau_i + t , \y_{\tau_i + t})_{i \in \mathbb{Z}}$. 

\begin{prop} \label{propExistence} Assume that for any $t \in \mathbb{R}$, $\E[g(\y_t; \phi,\psi) | \calf_{t-}^{\y}] \le C$ where $C < 1/\vartheta $, along with $\int_0^{+\infty} w(s;\alpha)ds = 1$.  Then, there exists a marked point process of the form $(\tau_i, \x_i)_{i \in \mathbb{Z}} := (\tau_i, \y_{\tau_i})_{i \in \mathbb{Z}}$, also represented by the random measure $N_g$ on $\mathbb{R} \times \mathbb{X}$, such that:
	\begin{enumerate}
		\item The counting process associated to $(\tau_i)_{i \in \mathbb{Z}}$ is adapted to $(\calf_t)_{t \in \mathbb{R}}$, and admits the stochastic intensity (with respect to $\calf_t$)
		$$ \lambda_g(t) = \eta + \vartheta \int_{(-\infty,t) \times \mathbb{X} } w(t-s;\alpha) g(\x; \phi,\psi)N_g(ds \times d\x).$$
		\item The random measure $N_g$ admits $\pi(ds \times d\x) = \lambda_g(s)ds \times F_s(d\x)$ as predictable compensator, where $F_s(d\x)$ is the conditional distribution of $\y_s$ given $\calf_{s-}^{\y}$. We recall that by definition, for any non-negative measurable predictable process $W$ (see \citep[Theorem II.1.8]{jacod2013limit}), the process\\ $\int_{(-\infty,t] \times \mathbb{X}} W(s,\x) \pi(ds \times d\x)$ is predictable, and moreover for any $u \leq t$
			$$ \E \left[\int_{(u,t] \times \mathbb{X}} W(s,\x) N_g(ds \times d\x) | \calf_{u}\right] = \E  \left[\int_{(u,t] \times \mathbb{X}} W(s,\x) \pi(ds \times d\x) | \calf_{u}\right].$$
		\item If the process $(\y_t)_{t \in \mathbb{R}}$ is stationary, then so is $N_g$.
	\end{enumerate} 
\end{prop}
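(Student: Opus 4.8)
The plan is to construct $N_g$ by \emph{Poisson embedding} (thinning), following the template of \cite{bremaud1996} and \citep[Chapter~6]{liniger2009multivariate} but carrying the serially dependent marks through the construction. Concretely, I would look for a sub-measure $N_g$ of $\overline{N}_g$ that keeps an atom $(t_i,u_i,\y_{t_i})$ of $\overline{N}_g$ precisely when $u_i\le\lambda_g(t_i)$, where $\lambda_g$ is built from the \emph{already retained} atoms through
\[
\lambda_g(t)=\eta+\vartheta\int_{(-\infty,t)\times\mathbb{X}}w(t-s;\alpha)\,g(\x;\phi,\psi)\,N_g(ds\times d\x).
\]
This is a self-referential equation, so I would solve it by Picard iteration: start from $N_g^{(0)}\equiv 0$, let $\lambda_g^{(n)}$ be the right-hand side evaluated at $N_g^{(n)}$, and let $N_g^{(n+1)}$ retain the atoms of $\overline{N}_g$ with $u_i\le\lambda_g^{(n)}(t_i)$. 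Since $w\ge0$ and $g\ge0$, the iteration map is monotone, so $N_g^{(n)}\uparrow N_g:=\lim_n N_g^{(n)}$ and $\lambda_g^{(n)}\uparrow\lambda_g$; each $\lambda_g^{(n)}$ is left-continuous and $(\calf_t)$-adapted, hence $(\calf_t)$-predictable.

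The heart of the argument is an a priori bound guaranteeing that $N_g$ is locally finite, and this is exactly where subcriticality enters. For fixed $n$, $N_g^{(n+1)}$ is the thinning of the unit-rate Poisson measure $\overline{N}_g$ by the $(\calf_t)$-predictable intensity $\mathbf 1\{u\le\lambda_g^{(n)}(t)\}$; the Poisson thinning theorem then gives that its $(\calf_t)$-compensator disintegrates as $\lambda_g^{(n)}(s)\,ds\,F_s(d\x)$. Here one must check two independence facts: that $\overline{N}$ stays Poisson of unit rate in the enlarged filtration $\calf_t=\calf_t^{\y}\vee\calf_t^{\overline{N}}$ (immersion, because $\overline{N}\perp\y$), and that at an atom $t_i$ of $\overline{N}$ the conditional law of $\y_{t_i}$ given $\calf_{t_i-}$ is still $F_{t_i}$ (again because $\overline{N}\perp\y$, so conditioning on $t_i$ being an atom adds nothing about $\y$). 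Taking expectations in $\lambda_g^{(n)}$ and invoking $\int_{\mathbb{X}}g(\x;\phi,\psi)F_s(d\x)=\E[g(\y_s;\phi,\psi)\mid\calf_{s-}^{\y}]\le C$ together with $\int_0^{\infty}w(s;\alpha)\,ds=1$ gives the renewal inequality $m_n(t)\le\eta+\vartheta C\int_{-\infty}^{t}w(t-s;\alpha)\,m_{n-1}(s)\,ds$ for $m_n(t):=\E[\lambda_g^{(n)}(t)]$, and induction from $m_0\equiv\eta$ yields $\sup_{t,n}m_n(t)\le\eta/(1-\vartheta C)<\infty$. Monotone convergence then passes this to $\E[\lambda_g(t)]\le\eta/(1-\vartheta C)$, so $N_g$ has a.s.\ finite intensity.

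With finiteness in hand, I would pass to the limit in the iteration. Because the $u_i$ are diffuse, $\{u_i\le\lambda_g^{(n)}(t_i)\}\uparrow\{u_i\le\lambda_g(t_i)\}$ up to a null set, so $N_g$ indeed retains exactly the atoms with $u_i\le\lambda_g(t_i)$ and satisfies the displayed fixed-point equation, which is the intensity in item~1. Since $\lambda_g$ is $(\calf_t)$-predictable with finite mean, the thinning theorem applied to the fixed point itself shows that the counting process of $(\tau_i)$ is $(\calf_t)$-adapted with stochastic intensity $\lambda_g(t)$, giving item~1; and the general disintegration of the compensator of a marked point process \citep[Theorem~II.1.8]{jacod2013limit}, combined with the same independence argument identifying the conditional mark law as $F_s$, gives $\pi(ds\times d\x)=\lambda_g(s)\,ds\times F_s(d\x)$ and the associated martingale identity, i.e.\ item~2. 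For item~3, when $\y$ is stationary the pair $(\overline{N}_g,\y)$ is jointly stationary under time shifts (as $\overline{N}$ is translation-invariant on $\R^2$ and independent of $\y$); the iteration map commutes with time translation and $N_g^{(0)}\equiv0$ is shift-invariant, so every $N_g^{(n)}$, and hence its increasing limit $N_g$, is stationary.

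The step I expect to be the main obstacle is the a priori bound of the second paragraph: one has to be scrupulous about the enlarged filtration $(\calf_t)$ — verifying predictability of the thinning intensities and the immersion/independence properties that keep $\overline{N}$ Poisson and the conditional mark law equal to $F_s$ in $(\calf_t)$ — so that the \emph{conditional} bound $\E[g(\y_s;\phi,\psi)\mid\calf_{s-}^{\y}]\le C$ translates into the effective branching ratio $\vartheta C<1$ controlling $\E[\lambda_g]$. Once that is set up cleanly, the renewal inequality, the monotone passage to the limit, and the verification of items~1--3 are all routine.
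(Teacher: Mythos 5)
Your proposal follows essentially the same route as the paper: thinning of the unit-rate Poisson measure $\overline{N}_g$, Picard iteration with monotone limits, an a priori moment bound via the subcriticality condition $\vartheta C<1$, identification of the compensator $\lambda_g(s)\,ds\times F_s(d\x)$ through the independence of $\overline{N}$ and $\y$, and stationarity inherited from the joint stationarity of $(\overline{N}_g,\y)$. The only cosmetic difference is that you bound $m_n(t)=\E[\lambda_g^{(n)}(t)]$ directly by a renewal inequality, whereas the paper bounds the increments $\rho_n(t)=\E[\lambda_{g,n}(t)-\lambda_{g,n-1}(t)]$ geometrically; both yield the same bound $\eta/(1-C\vartheta)$.
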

\begin{proof}[Proof of Proposition \ref{propExistence}]
	We construct by thinning and a fixed point argument the marked point process $N_g$. Define $\lambda_{g,0}(t) = \eta$. By induction, we then define the sequences of processes $(N_{g,n})_{n \in \mathbb{N}}$ and $(\lambda_{g,n})_{n \in \mathbb{N}}$ as follows. For any $t \in \mathbb{R}$ and $A \in \mathcal{X}$
	\begin{subequations}
		\begin{align}
		N_{g,n}((-\infty, t] \times A) &= \int_{(\-\infty,t] \times \mathbb{R} \times A}{\mathbf{1}_{\{0\leq u \leq \lambda_{g,n}(s)\}} \overline{N}_g(ds \times du \times d\x)},\label{Eqn: 5a}\\
		\label{lambdan1} \lambda_{g,n+1}(t) &= \eta + \vartheta \int_{(-\infty,t) \times \mathbb{X} } w(t-s;\alpha) g(\x; \phi,\psi)N_{g,n}(ds \times d\x).
		\end{align} 
		\label{systemHawkes}
	\end{subequations}
By taking conditional expectations throughout \eqref{Eqn: 5a} it is immediate to see that $t \to \lambda_{g,n}(t)$ is the stochastic intensity of $t \to N_{g,n}((-\infty,t] \times \mathbb{X})$. Moreover, by positivity of $w(t-s;\alpha) g(\x; \phi,\psi)$, we immediately deduce that the point process $N_{g,n}((-\infty, t] \times A)$ and the stochastic intensity $\lambda_{g,n}(t)$ are point wise increasing with $n$, so that we may define $N_g$ and $\lambda_{g}$ their limit processes. Moreover, by the monotone convergence theorem, taking the limit $n \to +\infty$ in the above equations yields that $t \to \lambda_g(t)$ is the stochastic intensity of $t \to N_g((-\infty, t] \times \mathbb{X})$ and has the desired shape. All we have to check to get the first claim of the proposition is the finiteness of the two limit processes. Let $\rho_n(t) = \E[\lambda_{g,n}(t) - \lambda_{g,n-1}(t)]$. We have
	\begin{eqnarray*}
		\rho_n(t) &=& \vartheta \E\left[ \int_{(-\infty,t) \times \mathbb{X} } w(t-s;\alpha) g(\x; \phi,\psi)\{N_{g,n-1} - N_{g,n-2}\}(ds \times d\x) \right] \\
		&=&  \vartheta \E\left[ \int_{(-\infty,t) \times \mathbb{X} } w(t-s;\alpha) g(\x; \phi,\psi)\{\lambda_{g,n-1}(s) - \lambda_{g,n-2}(s)\}ds F_s(d\x) \right] \\
		&=& \vartheta \E \left[ \int_{(-\infty,t)  } w(t-s;\alpha) \E[g(\y_{s}; \phi,\psi) | \calf_{s-}^\y] \{\lambda_{g,n-1}(s) - \lambda_{g,n-2}(s)\} ds \right] \\
		& \leq & C\vartheta   \int_{(-\infty,t)  } w(t-s;\alpha) \rho_{n-1}(s) ds,\\
	\end{eqnarray*}
	where we have used the $\calf_{s-}$ measurability of the stochastic intensities and that $\E[g(\y_{s}; \phi,\psi) | \calf_{s-}] = \E[g(\y_{s}; \phi,\psi) | \calf_{s-}^\y] \leq C$ (by independence of $\y$ and $\overline{N}$). From here, we deduce that $\sup_{s \in (-\infty,t)} \rho_n(s) \leq C\vartheta  \sup_{s \in (-\infty,t)} \rho_{n-1}(s)$ since $\int_0^{+\infty} w(s;\alpha)ds < 1 $. By a similar calculation, we also have $\rho_1(t) \leq C \vartheta \eta$, so that  by an immediate induction $\sup_{s \in (-\infty,t)} \rho_n(s) \leq (C\vartheta)^n \eta$. Therefore, $\E \lambda_g(t) = \eta + \sum_{k=1}^{+\infty} \rho_k(t) \leq \eta/(1-C\vartheta)  < +\infty$, which implies the almost sure finiteness of both $N_g$ (on any set of the form $[t_1,t_2] \times \mathbb{X}$, $- \infty < t_1 \leq t_2 < + \infty$) and $\lambda_g(t)$. This proves the first claim. Now we prove the second point. By a monotone class argument, it is sufficient to take $W(s,\x) = \mathbf{1}_A(\x)$ and prove the martingale property for any $A \in \mathcal{X}$. We show that for any $n \in \mathbb{N}$, $\pi^n(ds \times d\x) = \lambda_{g,n}(s)ds \times F_s(d\x)$ is the compensator of $N_{g,n}$. For $n=0$, we have 
		\begin{eqnarray*}
			\E \left[\int_{(u,t] \times \mathbb{X}} W(s,\x) N_{g,0}(ds \times d\x) | \calf_{u}\right] &=& \E \left[\sum_{u < t_i \leq t} \mathbf{1}_A(\y_{t_i})| \calf_{u}\right]\\
			&=&\E \left[\sum_{u < t_i \leq t} \E[ \mathbf{1}_A(\y_{t_i})| \calf_{t_i-} \vee \calf_{+\infty}^{\overline{N}}] | \calf_{u}\right]\\
			&=&\E \left[\sum_{u < t_i \leq t} \E[ \mathbf{1}_A(\y_{t_i})| \calf_{t_i-}^{\y} ] | \calf_{u}\right]\\
			&=&\E \left[\int_{(u,t]}\E[ \mathbf{1}_A(\y_{s})| \calf_{s-}^{\y} ] N_{g,0}(ds\times\mathbb{X}) | \calf_{u}\right] \\
			&=& \E \left[\int_{(u,t] \times \mathbb{X}} \mathbf{1}_A(\x) \lambda_{g,0}(s) F_s(d\x)ds | \calf_{u}\right]\\
			&=&\E \left[\int_{(u,t]}W(s,\x) \pi^0(ds \times d\x) | \calf_{u}\right],\\
		\end{eqnarray*} 
		where the existence of a regular distribution for $\y_s$ given $\calf_{s-}^\y$ is a consequence of the fact that $\mathbb{X}$ is a Borel space along with \citep[Theorem 6.3]{kallenberg2006foundations}. By induction, we easily prove that this holds for any $n \in \mathbb{N}$, and thus the second claim is a direct consequence of the monotone convergence theorem. Finally, the third claim comes from the stationarity of $\lambda_{g,n}$ and $N_{g,n}$ which is in turn a consequence of the stationarity of $\overline{N}_g$ and $\y$. 
\end{proof}

\begin{remark}
The above construction also works for a marked point process starting from $0$ instead of $-\infty$ (just replace $-\infty$ by $0$ in all the integrals). In that case, the resulting process $N_g$ is obviously not stationary, but one can prove that $N_g$ converges to the stationary version starting from $-\infty$ by a straightforward adaptation of the proof of Theorem 1 in \cite{bremaud1996}.
\end{remark}

\begin{remark}
If the marks $\y_s$ are i.i.d, the conditional expectation $\E[g(\y_s) | \calf_{s-}^\y]$ reduces to the usual expectation $\E[g(\y_s)]$ which is equal to unity due to normalization of the boost function $g$. Hence the condition that $\E[g(\y_s)|\calf_{s-}^{\y}] \leq C <1/\vartheta$ is obviously satisfied.
\end{remark}

\begin{remark}
 When $\y$ is a left-continuous process, or more generally a predictable process, then $\E[g(\y_s)|\calf_{s-}^{\y}]=g(\y_s)$ which leads to the condition $g(\y_s)\leq C <1/\vartheta$. This may be very restrictive in practice. For example, for a parametric linear boost in a single mark this would require the mark process to be bounded above by a constant which depends on $\phi$, $\psi$ and $\vartheta$. It is also interesting to note that for mark processes with continuous sample paths, mixing conditions (which specify the rate at which dependence fades away with increasing time separation) will not lead to a weakening of the aforementioned stringent condition. The difficulty stems from the dependence of $g(\y_t)$ and $g(\y_s)$ when $t$ and $s$ are close together. If at some time $t_0$, $g(\y_{t_0})>1/\vartheta$, by `continuity', it will stay above that level for some time $[t_0, t_0+\epsilon]$. On this interval the process becomes explosive, and regardless of the number of jumps, all the marks are highly correlated since $g(\y_s)\approx g(\y_{t_0}) > 1/\vartheta$ for $s \in [t_0, t_0+\epsilon]$.
\end{remark}

\begin{remark}
In view of the last remark, marks which arise from a stochastic process with continuous sample paths are probably not practical for use in marked Hawkes self exciting processes. On the other hand, marks based on a stochastic process which contains some degree of independence could more easily satisfy the condition of Proposition \ref{propExistence}. For example, let $\y_t = U_t + V_t$ where $U_t$ has continuous sample paths and $V_t$ be a pure noise process independent of $U_t$. More generally a conditionally independent specification in which $U_t$ is as before and $\y_t | \{U_t\} \overset{i.i.d}{\sim} f(\cdot|U_t)$ would also more easily satisfy the condition. For instance $U_t$ may specify some of the parameters needed for the density $f$. 
\end{remark}

\begin{remark}
The condition $\E[g(\y_t)|\calf_{t-}^{\y}] \leq C <1/\vartheta$ can be slightly relaxed as we next explain. Let $\rho_n(t) = \E[\lambda_{g,n}(t) - \lambda_{g,n-1}(t)]$. Following the proof of Proposition 1 we know that a sufficient condition for non-explosion is $\sum_{n=1}^{+\infty} \rho_n(t)<\infty$. A straightforward induction shows that
\[
\rho_n(t) = \vartheta^n \eta \E\int_{-\infty}^{t_1}\cdots \int_{-\infty}^{t_{n-1}} w(t-t_1) \cdots w(t_{n-1}-t_{N_T})\psi_{t_1} \cdots \psi_{t_{N_T}}dt_1 \cdots dt_{N_T},
\]
where $\psi_t = \E[g(\y_t)|\calf_{t-}^{\y}]$. Therefore, if we replace the above condition by $\int_{-\infty}^t w(t-s)\E[g(\y_s)|\calf_{s-}^{\y}]ds \leq C < 1/\vartheta$ for any $t \in \mathbb{R}$, then $\sum_{n=1}^{+\infty} \rho_n(t) \leq \frac{\eta}{1-C\vartheta} < +\infty$, and the process is stable.
\end{remark}

\begin{remark}
If the marks are considered to be observations on a discrete time process then it is not obvious that the thinning method used above can be used to construct a marked Hawkes process. Moreover, while it is possible to construct a non-stationary marked Hawkes process with discrete marks by iterating the intensity function from some initial time $t_0$, it is not clear whether there exists a stationary version of this process on $\mathbb{R}$. 
\end{remark}

\section{Quasi-likelihood for marked Hawkes processes}\label{Sec: Quasi-likelihood}

The log-likelihood and associated statistical properties for the unmarked Hawkes SEPP has a long history -- see \cite{ozaki1979maximum}, \cite{ogata1978asymptotic} or \cite{andersen1996statistical} for example. In deriving the likelihood \citep[Definition 3]{embrechts2011} assume that the marks are unpredictable as defined in \citep[Definition 6.4.III(b)]{daley2002introduction} so that the distribution of $\X_i$, the mark at time $t_i$, is independent of previous event times and marks, i.e. of $\{(t_j, \X_j)\}$ for $t_j <t_i$. An example of unpredictable marks is where the marks are conditionally i.i.d. given the past of the process but the marks may impact on the future of the intensity $\lambda_g$ as in \eqref{Eqn: Intensity Process}. The simplest example of this is where the marks are actually i.i.d. unconditionally as considered in \cite{embrechts2011}.
In our empirical analysis we have frequently observed that $\{\X_i\}$ is a time series of serially dependent marks. In this case the unpredictability property does not hold. As far as we can determine, in the literature on likelihood inference for marked Hawkes processes there is no existing treatment of the serially dependent marks case.

In general, it is possible to represent the log-likelihood $\bar{l}_g$ when the marks are not i.i.d as follows: recall that the integer-valued measure $N_g(dt \times d\x)$ admits a predictable compensator $\pi(dt \times d\x)$ by Proposition \ref{propExistence} (ii) of the form $\pi(ds \times d\x) = \lambda_g(s;\nu)ds \times F_s(d\x,\phi)$. Assuming that for any $s \in \mathbb{R}_+$, the conditional distributions $F_s(d\x,\phi)$ are dominated by some measure $c(d\x)$ ($F_s(d\x,\phi) = f_s(\x;\phi)c(d\x)$), using \citep[Theorem III.5.19]{jacod2013limit}) we can generalize  the log-likelihood  \eqref{Eqn: log likelihood Unmarked Hawkes} for the pure point process with 
$$ \bar{l}_g(\nu) = \int_{[0,T] \times\mathbb{X}} \ln [\lambda_g(t;\nu)f_t(\x;\phi)]N_g(dt\times d\x) - \int_{[0,T]}\underbrace{\int_{\mathbb{X}}f_t(\x;\phi)c(d\x)}_{=1}\lambda_g(t;\nu)dt.$$
Expanding the logarithm, we get
\begin{equation}\label{Eqn: log likelihood General Marked Case}
 \bar{l}_g(\nu) = \int_{[0,T] \times\mathbb{X}} \ln \lambda_g(t;\nu)N_g(dt\times d\x) - \Lambda_g(T;\nu) + \int_{[0,T] \times\mathbb{X}} \ln f_t(\x;\phi)N_g(dt\times d\x),
\end{equation}
where the compensator at $T$ is 
\begin{equation}
\Lambda_g(T;\nu)=\int_{[0,T]}\lambda_g(t;\nu)dt.\nonumber
\end{equation}
However, because of the third term, computing (\ref{Eqn: log likelihood General Marked Case}) requires that one observes the whole trajectory of the joint process $(N_t, \y_t)_{t \in [0,T]}$. When assuming that we only have discrete observations of the form $(t_i, \x_i)_{1 \leq i\leq N_T} = (t_i, \y_{t_i})_{1 \leq i\leq N_T}$, the last term in (\ref{Eqn: log likelihood General Marked Case}) should be changed to $\sum_{i=1}^{N_T} \textnormal{ln} f(\y_{t_i}; \phi | (t_j, \y_{t_j})_{1 \leq j < i})$, where $f(\cdot; \phi |  (t_j, \y_{t_j})_{1 \leq j < i}) $ corresponds to the conditional density of  the $i$th mark given  $(t_j, \y_{t_j})_{1 \leq j < i}$. This yields the log-likelihood 
\begin{equation}\label{Eqn: log likelihood General Marked Case2}
 l_g(\nu) = \int_{[0,T] \times\mathbb{X}} \ln \lambda_g(t;\nu)N_g(dt\times d\x) - \Lambda_g(T;\nu) + \sum_{i=1}^{N_T} \textnormal{ln} f(\y_{t_i}; \phi | (t_j, \y_{t_j})_{1 \leq j < i}).
\end{equation}

Some examples of the likelihood for cases where the marks are observations on a stationary process follow.
\begin{example}
The marks are i.i.d with density $f$ w.r.t some measure $c$ as in \cite{embrechts2011} and the last term in \eqref{Eqn: log likelihood General Marked Case2} becomes $\int_{[0,T]\times\mathbb{X}} \ln f(\x;\phi) N_g(dt\times d\x)$ which evaluates to $\sum_{i=1}^{N_T} \ln f(\x_i; \phi)$ and the log-likelihood is
\begin{equation} \label{Eqn: Joint log likelihood}
l_g(\nu) = \int_{[0,T]\times\mathbb{X}} \ln \lambda_g(t;\nu)N_g(dt\times d\x)- \Lambda_g(T;\nu) +  \sum_{i=1}^{N_T} \ln f(\x_i; \phi).
\end{equation}
\end{example}
\begin{example}
 More generally, the marks are observations $\x_i = \y_{t_i}$ on a stationary process $(\y_t)_{t \in \mathbb{R}_+}$ in continuous time. Then the last term in \eqref{Eqn: log likelihood General Marked Case} is the sum of the log conditional densities of $\y_{t_i}|\y_{t_{i-1}},\ldots,\y_{t_1}$ evaluated at the $\x_i$. We can write this as $\ln f(\x_1,\ldots,\x_{N_T}|t_1,\ldots,t_{N_T}; \phi)$ giving the quasi-likelihood in the form
\begin{align} \label{Eqn: Joint log likelihood Serial Dep Case}
l_g(\nu)& = \int_{[0,T]\times\mathbb{X}} \ln \lambda_g(t;\nu)N_g(dt\times d\x)- \Lambda_g(T;\nu)\nonumber\\& \quad +   \ln f(\x_1,\ldots,\x_{N_T}|t_1,\ldots,t_{N_T}; \phi)
\end{align}
where $\phi$ represents all the parameters of the joint conditional distribution including any parameters needed to model serial dependence. Note that this density depends on the event times since the specification of joint distributions for the continuous time process requires these. A simple example is when $(\y_t)_{t \in \mathbb{R}_+}$ is a stationary Gaussian process with covariance between $\y_t$ and $\y_s$ given by a function $\Gamma(s-t; \phi)$ depending on parameters $\phi$.
\end{example}
\begin{example}
	In applications to the limit order book, \cite{RichDunsmuirPeters} modelled the marks $\x_i$ at event time $t_i$ as observations on a discrete time stationary process indexed by event index $i$. The third term in \eqref{Eqn: Joint log likelihood Serial Dep Case} is replaced by $\ln f(\x_1,\ldots,\x_{N_T}; \phi)$ where $f$ now denoted the joint density for the discrete time stationary time series $\{\x_i\}$ in which actual event times are ignored and only the indices, $i$, of event times are needed to model serial dependence structure. It is not clear that this can be written as an integral with respect to $N_g(dt\times d\x)$ corresponding to the third term in \eqref{Eqn: log likelihood General Marked Case}.  Note that this leads to the objective function 
\begin{align} \label{Eqn: Joint log likelihood Discrete time Serial Dep Case}
l_g(\nu)& = \int_{[0,T]\times\mathbb{X}} \ln \lambda_g(t;\nu)N_g(dt\times d\x)- \Lambda_g(T;\nu) +   \ln f(\x_1,\ldots,\x_{N_T}; \phi)
\end{align}
to be maximised over the parameters. However this is not a formal likelihood, nor does it seem possible to define a stationary Hawkes process, as we did in Section \ref{SubSec: Existence of Stable Marked Hawkes} for the case where marks are drawn from a stationary discrete time process. Of course in the absence of serial dependence both \eqref{Eqn: Joint log likelihood Serial Dep Case} and \eqref{Eqn: Joint log likelihood Discrete time Serial Dep Case} lead to the i.i.d. version \eqref{Eqn: Joint log likelihood} considered in the literature to date.
\end{example}
When $\psi =0$ the boost is the identity so that marks do not impact the intensity. But the marks process and the event process may not be independent because the conditional distribution of marks is not free of the event times. None-the-less, the log-likelihood in \eqref{Eqn: log likelihood General Marked Case2} becomes a sum of two terms
\begin{equation*} \label{Eqn: Joint log likelihood Null Hyp}
l(\theta,\phi)=l(\theta) +  \sum_{i=1}^{N_T} \textnormal{ln} f(\y_{t_i}; \phi | (t_j, \y_{t_j})_{1 \leq j < i})
\end{equation*}
where the first term is the log-likelihood for the unmarked process $N(t)$
\begin{equation}\label{Eqn: log likelihood Unmarked Hawkes}
l(\theta) =  \int_{[0,T]} \ln \lambda(t;\theta)N(dt)- \Lambda(T;\theta)
\end{equation}
with corresponding compensator
\begin{equation*}
\Lambda(T,\theta)=\int_{0}^{T}\lambda(t;\theta)dt.
\end{equation*}
Here $N(dt)=N_g(dt,\mathbb{X})$ and $\lambda(t;\theta)=\lambda_g(t;\theta,0,\phi)$ for and $\phi \in \Phi$. The second term is the log likelihood for the marks conditional on event times.
Hence, under $H_0$, the parameters $\theta$ of the unmarked Hawkes process are decoupled from the parameters $\phi$ of the marks distribution so that these can be separately estimated. 
Note that in all forms of the quasi-likelihood, \eqref{Eqn: log likelihood General Marked Case}, \eqref{Eqn: log likelihood General Marked Case2}, \eqref{Eqn: Joint log likelihood}, \eqref{Eqn: Joint log likelihood Serial Dep Case} and \eqref{Eqn: Joint log likelihood Discrete time Serial Dep Case}, the third term involves only the parameter $\phi$ and hence the score vector with respect to the boost parameters $\psi$ does not involve the third term. Hence the score with respect to $\psi$ is the same under the null hypothesis. However, the score statistic also involves the information matrix and because the first two terms in the quasi-likelihoods do involve $\phi$ (recall that the boost function is normalized using moments of the marginal distribution of the marks), parts of Condition \ref{Cond: h function} are required so that the information matrix and hence the score test statistic are the same under the null hypothesis in all examples of likelihoods given above.
\section{The Score Test} \label{Sec: The Score Test}
Let $\nu^* = (\theta^{*}, \phi^{*},0)$ denote the true value of the combined parameters under $H_0$. Let $\hat \nu_T =(\hat \theta_T, \hat \phi_T, 0)$ where $\hat \theta_T$ is the quasi asymptotic maximum likelihood estimate, as in \citep[page 1804]{ClinetYoshida2017}, based on the likelihood \eqref{Eqn: log likelihood Unmarked Hawkes} under $H_0$ of the intensity process parameters and $\hat \phi_T$ be the MLE for the parameters of the marks density. Denote the derivatives of the log-likelihood with respect to $\nu$ as $\p_\nu l_g(\nu)$ at the parameter value $\nu$ so that $\p_\theta l_g(\nu^{*})$ and $\p_\nu l_g(\hat \nu_T)$ are evaluated at $\nu^{*}$ and $\hat \nu_T$ respectively. 
The score (or Lagrange multiplier) test statistic \citep{BreuschPaganLMtest} is defined as 
\begin{equation}\label{Eqn: Score Statistic Basic}
\hat Q_T  = \p_\nu l_g(\hat \nu_T)^\T \I(\hat \nu_T)^{-1} \p_\nu l_g(\hat \nu_T)
\end{equation}
where $\I(\nu^{*})  = \mathbb{E}_{\nu^{*}}[ \p_\theta l_g(\nu^{*}) \p_\theta l_g ( \nu^{*})^\T]$ and $\I(\hat \nu_T)$ evaluates this at the parameters, $\hat \nu_T$, estimated under $H_0$. Also \citep{BreuschPaganLMtest} the information matrix can be replaced by any matrix with the same limit in probability, for example the negative of the matrix of second derivatives of the log-likelihood, and the large sample properties of the score statistic will be the same. 
Under Condition \ref{Cond: h function} stated below on the functions $h$ defining the boost functions $g$ via \eqref{Eqn: Boost function g} the information matrix is shown in \cite{RichDunsmuirPeters} to be block diagonal which, together with $\p_\nu l_g(\hat \nu_T)= (0,0,\p_\psi l_g(\hat \nu_T))^\T$, allows simplification of \eqref{Eqn: Score Statistic Basic} to
\begin{equation}\label{Eqn: Score Statistic reduced}
\hat Q_T  = \p_\psi l_g(\hat \nu_T)^\T\I_{\psi}(\hat \nu_T)^{-1} \p_\psi l_g(\hat \nu_T)
\end{equation}
where $\I_{\psi}(\hat\nu)$ is the $r \times r$ diagonal block of $\I(\hat \nu_T)$ corresponding to $\psi$.

Because of the third term in the log-likelihood \eqref{Eqn: log likelihood General Marked Case} (and all variants \eqref{Eqn: log likelihood General Marked Case2}, \eqref{Eqn: Joint log likelihood}, \eqref{Eqn: Joint log likelihood Serial Dep Case}, \eqref{Eqn: Joint log likelihood Discrete time Serial Dep Case}) do not depend on $\psi$ it follows that
\begin{align}\label{Eqn: Score wrt general psi}
\p_\psi l_g(\nu) =  \int_{[0,T]\times\mathbb{X}}  \lambda_g(t;\nu)^{-1}\p_\psi\lambda_g(t;\nu)    N_g(dt\times d\x)-\int_{[0,T]} \p_\psi\lambda_g(t;\nu) dt
\end{align}
where
\begin{align*}
\p_\psi\lambda_g(t;\nu)=\vartheta \int_{[0,t)\times\mathbb{X}} w(t-s;\alpha) \p_\psi g(\x;\phi,\psi) N_g(ds \times d\x)
\end{align*}
and the vector of derivative of $g$ with respect to $\psi$ is
\begin{equation*}\label{Eqn: partial g wrt psi}
\partial_\psi g(\X;\phi,\psi)=\frac{1}{\E_{\phi}[h(\X;\psi)]} [ \partial_\psi h(\X;\psi)-g(\X;\phi,\psi)\E_\phi[ \partial_\psi h(\X,\psi)]].
\end{equation*}

\begin{condition} \label{Cond: h function} 
	\textbf{Conditions on boost function specification:} 
	Throughout we assume $h$, used to define the boost function $g$ in \eqref{Eqn: Boost function g}, and its first and second derivatives  with respect to $\psi$, denoted  $\partial_\psi h$ and $\p_{\psi\psi}^2 h$, satisfy the following properties: 
	\begin{enumerate}[(i)]
		\item $h(\X; 0) \equiv 1$; 
		\item $\mathbb{E}_{\phi}[h(\X;\psi)]$ and $\mathbb{E}_{\phi}[ \partial_\psi h(\X,\psi)]$ exist for all $\psi \in \Psi$, $\phi \in \Phi$;
		\item $\p_\phi \E_\phi(h(\X;\psi))|_{\nu^{*}} =0$;
		\item $\p_\phi \E_\phi[\p_\psi h(\X,\psi)]|_{\nu^{*}}$ exists for all $\phi \in \Phi$;
		\item $\cov_\phi(H(\X)) = \Omega_G(\phi)$ where $\Omega_G(\phi)$ is a finite positive definite matrix for any $\phi \in \Phi$ where $H(\X):=\partial_\psi h(\X;0)$.
	\end{enumerate}
\end{condition}
These conditions hold for all the boost functions mentioned above. Obviously, based on the properties required of $h$, $\mathbb{E}_{\phi}[g(\X;\phi,\psi)] = 1$ for all $\psi \in \Psi$, $g(\X; \phi,0) \equiv 1$,  and, letting $g(\x;\phi) =\partial_\psi g(\X;\phi,0)$,  $\mathbb{E}_{\phi}[ G(\X;\phi)]=0$. With the above specification, the null hypothesis of marks not impacting intensity is achieved by setting $\psi = 0$.  Note that $G(\X;\phi) =H(\X)-\mathbb{E}_\phi[ H(\X)]$ is a vector of dimension $r$ comprised of functions of the components of the vector mark centered at their expectations. The requirements that $\mathbb{E}_{\phi}[h(\X;\psi)]$, $\mathbb{E}_{\phi}[ \partial_\psi h(\X;\psi)]$ and $\p_\phi \E_\phi[\p_\psi h(\X,\psi)]|_{\nu^{*}}$ exist impose obvious conditions on the marginal distribution of $\X_m$. For example, if $h(\X;\psi)$ is a polynomial of degree $p$ in $\X$ then $\mathbb{E}_{\phi}[\X^p]$ needs to exist. Condition \ref{Cond: h function} parts (iii) and (iv) are required in order that the information matrix for all parameters in the full model likelihood is block diagonal allowing simplification of the score statistic defined below. For the definition of the score statistic we require the existence and positive definiteness of the covariance matrix of $g(\x;\phi)$, $\Omega_G(\phi)  = \cov_\phi(H(\X))$, as stated in (vi). Under $H_0$, the derivative of  \eqref{Eqn: Joint log likelihood} with respect to $\psi$ at any values of $\theta$, $\phi$ is

\begin{align}\label{Eqn: Score wrt general psi under H0}
\p_\psi l_g(\theta,\phi,0) =  \int_{[0,T]}  \lambda(t;\theta)^{-1}\p_\psi\lambda_g(t;\theta,\phi,0)   N(dt)-\int_{[0,T]} \p_\psi\lambda_g(t;\theta,\phi,0) dt
\end{align}
with
\[
\p_\psi\lambda_g(t;\theta,\phi,0)=\vartheta \int_{[0,t)\times\mathbb{X}} w(t-s;\alpha) G(\x;\phi) N_g(ds \times d\x).
\]
When evaluated at the estimates under the null hypothesis
\[\p_\psi l_g(\hat\nu_T) =  \int_{[0,T]}  \lambda(t;\hat\theta_T)^{-1}\p_\psi\lambda_g(t;\hat\theta_T,\hat\phi_T,0)   N(dt)-\int_{[0,T]} \p_\psi\lambda_g(t;\hat\theta_T,\hat\phi_T,0) dt
\]
and 
\begin{align*}
\p_\psi\lambda_g(t;\hat\theta_T,\hat\phi_T,0))=\hat\vartheta_T \int_{[0,t)\times\mathbb{X}} w(t-s;\hat\alpha_T) g(\x;\hat\phi_T) N_g(ds \times d\x).
\end{align*}
When evaluated at the true parameter vector, $\nu^{*}=(\theta^{*},\phi^{*},0)$ under $H_0$,  the score \eqref{Eqn: Score wrt general psi under H0} can be written as
\begin{align}\label{Eqn: Score wrt psi under H0}
\p_\psi l_g(\nu^{*}) =  \int_{[0,T]}  \lambda(t;\theta^{*})^{-1}\p_\psi\lambda_g(t;\nu^{*})  \tilde N(dt)
\end{align}
where $\tilde N(dt) = N(dt) -\lambda(t;\theta^{*})dt$
and 
\begin{align}
\I_{\psi}(\nu^*)&= \E[\int_{[0,T]}  \lambda(t;\theta^{*})^{-2}(\p_\psi\lambda_g(t;\nu^{*}))^{\otimes 2} N(dt)],\label{Eqn: Ipsipsi under H0, ver 3}
\end{align}
where $x^{\otimes 2} = x.x^\T$. Noting that the expectation required to evaluate \eqref{Eqn: Ipsipsi under H0, ver 3} 
is not computable in closed form, we suggest empirical evaluation replacing the expectation by the time average over events and using the estimate $\hat \nu_T$ to get
\begin{equation}
\hat \I_{\psi} = \int_{[0,T]}  \lambda(t;
\hat\theta_T)^{-2}(\p_\psi\lambda_g(t;\hat\nu_T))^{\otimes 2} N(dt)\label{Eqn: Estimated Ipsipsi under H0, ver 3}.
\end{equation}
We show that this empirical estimate has the same asymptotic limit as \eqref{Eqn: Ipsipsi under H0, ver 3} when scaled by $T$. 
Using these estimates in the definition \eqref{Eqn: Score Statistic reduced}, the score statistic can be implemented in practice as
\begin{equation}\label{Eqn: Score Statistic implemented form}
\hat Q_T  = \p_\psi l_g(\hat \nu_T)^\T\hat\I_{\psi}^{-1} \p_\psi l_g(\hat \nu_T)
\end{equation}
where $\p_\psi l_g(\hat \nu_T)$ is defined above and $\hat\I_{\psi}$ is given by \eqref{Eqn: Estimated Ipsipsi under H0, ver 3}.

\section{Asymptotic Distribution of the Score Statistic} \label{Sec: Asymptotic Dist Score Test}

To prove that the score statistic $\hat Q_T$ has a large sample chi-squared distribution under the null hypothesis conditions are required on the intensity process for the unboosted process. The extra conditions are those required for convergence of the quasi MLE for Hawkes processes under $H_0$, for which the intensity does not depend on marks. Because we adapt the proofs of \citep[Theorems 3.9 and 3.11]{ClinetYoshida2017} to the score statistic we re-state their conditions [A1], [A2], [A3] and [A4] here. These generalize Conditions A, B and C of \cite{ogata1978asymptotic} applied to the intensity process defined in \eqref{Eqn: Intensity Process Null Hyp} for the unmarked process. \cite{ogata1978asymptotic} provided the first consistency and asymptotic normality results for the unmarked Hawkes process and verified that his conditions apply to the exponential decay function $w(t;\alpha)$. \cite{ClinetYoshida2017} give conditions for the convergence of moments of the quasi MLE and verify them for the exponential decay function case. As far as we are aware there has been no published verification of the conditions of \cite{ogata1978asymptotic} or \cite{ClinetYoshida2017} for the power law decay function.

\begin{condition} \label{Cond: Conditions on Quasi MLE under null hypothesis} \textbf{Conditions on the intensity process under $H_0: \psi = 0$.}  For clarity, these are restated from \cite{ClinetYoshida2017} using notation of this paper and as relevant to the Hawkes process. These conditions refer to the intensity process defined in \eqref{Eqn: Intensity Process Null Hyp}. Recall that $\theta^{*}$ refers to the true parameter defining the intensity process under $H_0$.
	
	\begin{description}
		\item[A1] The mapping $\lambda: \Omega \times \mathbb{R}_{+}\times \Theta \to \mathbb{R}_{+}$ is $\F \otimes \mathbf{B}(\mathbb{R}_{+})\otimes \mathbf{B}(\Theta)$-measurable. Moreover, almost surely:
		\begin{enumerate}[(i)]
			\item for any $\theta \in \Theta$, $s \to \lambda(s,\theta)$ is left continuous;
			\item for any $s \in \Rp$, $\theta \to \lambda(s,\theta)$ is in $C^3(\Theta)$ and admits a continuous extension to $\bar \Theta$.
		\end{enumerate}
		\item[A2] The intensity process $\lambda$ and its derivatives satisfy,
			 for any $p >1$, $$\sup_{t \in \Rp} \sum_{i=0}^{3}\|\sup_{\theta \in \Theta }|\p^i_\theta \lambda(t,\theta)|\|_p < \infty.$$
		\item[A3] For a Borel space $(E, \mathbf{B}(E))$ let $C_b(E,\R)$ be the set of continuous, bounded functions from $E$ to $\R$. For any $\theta \in \Theta$ the triplet $(\lambda(\cdot, \theta^*),\lambda(\cdot, \theta), \p_\theta \lambda(\cdot,\theta))$ is ergodic in the sense that there exists a mapping $\pi: C_b(E,\R)\times \Theta \to \R$ such that for any  $(\xi, \theta) \in C_b(E,\R)\times \Theta$, 
		\[
		\frac{1}{T}\int_{0}^{T}\xi(\lambda(s,\theta^*),\lambda(s,\theta),\p_\theta \lambda(s,\theta))ds \to^ \mathbb{P} \pi(\xi,\theta).
		\]
		
		\item[A4] Define 
		\[
		\mathbb{Y}_T(\theta)=\frac{1}{T}(l_T(\theta)-l_T(\theta^{*}),
		\] which is shown in \citep[Lemma 3.10]{ClinetYoshida2017} to satisfy
		\[
		\sup_{\theta \in \Theta }|
		\mathbb{Y}_T(\theta)-\mathbb{Y}(\theta)| \to^{\mathbb{P}}0
		\]
		and $\mathbb{Y}(\theta)$ is the ergodic limit of $\mathbb{Y}_T(\theta)$ as defined in \citep[p. 1807]{ClinetYoshida2017}.
		Assume, for asymptotic identifiability, that for any $\theta \in \bar \Theta -\{\theta^{*}\}$, $\mathbb{Y}(\theta)\ne 0$.
	\end{description}
\end{condition}	

Under Condition \ref{Cond: Conditions on Quasi MLE under null hypothesis}: [A1] to [A4], \cite{ClinetYoshida2017} show (Theorem 3.9) that any asymptotic QMLE $\hat \theta_T$ is consistent, $\hat \theta_T \to^{\mathbb{P}} \theta^{*}$, and (Theorem 3.11) asymptotically normal $\sqrt{T}(\hat \theta_T - \theta^{*}) \to^{d} \Gamma^{-\frac{1}{2}}\zeta$ where $\zeta$ has a standard multivariate normal distribution and $\Gamma$ is the asymptotic information matrix, assumed to be positive definite. Additionally they prove that $\Gamma$ satisfies
\[
\sup_{\theta \in V_T}|T^{-1}\p_\theta^2 l_T(\theta)+\Gamma|\to^{P} 0,
\]
where $V_T$ is a ball shrinking to $\theta^*$.  

As noted above these conditions are met for the (multivariate) exponential decay Hawkes process without marks as shown in 
\citep[Section 4]{ClinetYoshida2017} assuming each element of $\theta = (\eta,\vartheta,\alpha)$ belongs to finite closed intervals of $\R$. For example, for the exponential decay function $w(s;\alpha)= \alpha \exp(-\alpha s)$, $K=3$ and we assume that $0<\underline{\eta}\le \eta \le \bar{\eta}<\infty$, $0<\underline{\vartheta}\le \vartheta \le \bar{\vartheta}<\infty$, $0<\underline{\alpha}\le \alpha \le \bar{\alpha}<\infty$ so that $\Theta$ is a finite dimensional relatively compact open subset of $\R^3$. 

In order to establish the asymptotic distribution of the score vector with respect to $\psi$, Condition \ref{Cond: Conditions on Quasi MLE under null hypothesis} A.2 needs to be extended to accommodate the contribution to the score vector from the marks as follows.
\begin{condition}\label{Cond: Condition on sup lambda_g}
	For $p = (dim(\Theta)+1)\vee 4$, where $x \vee y = \max(x,y)$, under $H_0$ and with $\phi$ fixed at $\phi^*$, assume
	\begin{equation}\label{Eqn: Condition on uniform lambda_g}
	\sup_{t \in \Rp} \sum_{i=0}^{2}\|\sup_{\theta \in \Theta }|\p^i_\theta(\p_\psi\lambda_g(t;\theta, \phi, \psi)|_{(\theta, \phi^*,0)})|\|_p < \infty.
	\end{equation}
\end{condition}
\begin{lem} \label{Lem 1}
	Condition \ref{Cond: Condition on sup lambda_g} is satisfied for the exponential decay function model (for which $dim(\Theta) = 3$) and stationary ergodic marks for which $\E_{\phi^*}[|G(\X)|^4]<\infty$. 
\end{lem}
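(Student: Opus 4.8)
Since for the exponential kernel $w(s;\alpha)=\alpha e^{-\alpha s}$ one has $\dim(\Theta)=3$, the index required in Condition~\ref{Cond: Condition on sup lambda_g} is $p=(\dim(\Theta)+1)\vee 4=4$. The plan is to first compute, under $H_0$ and with $\phi=\phi^*$,
\[
\p_\psi\lambda_g(t;\theta,\phi^*,0)=\vartheta\alpha\int_{[0,t)\times\mathbb{X}}e^{-\alpha(t-s)}G(\x;\phi^*)\,N_g(ds\times d\x)=\vartheta\alpha\sum_{t_i<t}e^{-\alpha(t-t_i)}G(\X_i),
\]
and to observe that differentiating the scalar kernel $\vartheta\alpha e^{-\alpha x}$ in $\theta=(\eta,\vartheta,\alpha)$ up to second order produces only finitely many terms of the form $(\text{polynomial in }(\vartheta,\alpha)\text{ of bounded degree})\times x^k e^{-\alpha x}$ with $k\le 2$, while $\p_\eta$ and $\p_\vartheta^2$ annihilate it. Since $\Theta$ is relatively compact and $\alpha\ge\underline\alpha>0$ on $\Theta$, I would absorb each polynomial factor via $x^k e^{-\alpha x}\le x^k e^{-\underline\alpha x}\le c_k e^{-cx}$ for $x\ge 0$, with $c:=\underline\alpha/2$ and $c_k:=\sup_{x\ge0}x^k e^{-\underline\alpha x/2}<\infty$. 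This reduces all of \eqref{Eqn: Condition on uniform lambda_g}, with a $t$-independent constant, to showing that, with $Z_t:=\int_{[0,t)\times\mathbb{X}}e^{-c(t-s)}|G(\x;\phi^*)|\,N_g(ds\times d\x)=\sum_{t_i<t}e^{-c(t-t_i)}|G(\X_i)|$ (Euclidean norm of the $\R^r$-valued $G$), one has $\sup_{t}\|Z_t\|_4<\infty$.

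The key observation is that under $H_0$ ($\psi=0$, $g\equiv 1$) the thinning construction of Proposition~\ref{propExistence} produces the unmarked point process $N(dt)=N_g(dt\times\mathbb{X})$ as a functional of the driving Poisson process alone, so $N$ is independent of the mark process $\y$, and the atoms of $N_g$ are precisely $(t_i,\y_{t_i})$. Conditionally on the event times $\mathcal N=(t_i)$ the weights $e^{-c(t-t_i)}$ are deterministic and the marks are $\y$ sampled at fixed times; applying Minkowski's inequality in the conditional $L^4$ norm, together with stationarity of $\y$ (so $\E[|G(\y_s)|^4]=m_4:=\E_{\phi^*}[|G(\X)|^4]$ for every $s$), gives
\[
\E\bigl[Z_t^4\bigm|\mathcal N\bigr]\le m_4\Bigl(\sum_{t_i<t}e^{-c(t-t_i)}\Bigr)^4=m_4\,W_t^4,\qquad W_t:=\int_{[0,t)}e^{-c(t-s)}\,N(ds),
\]
hence $\E[Z_t^4]\le m_4\,\E[W_t^4]$, and it remains to bound $\sup_t\E[W_t^4]$.

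For the last step I would split $[0,t)$ into the blocks $I_j=[t-(j+1)\delta,t-j\delta)\cap[0,t)$, bound $W_t\le\sum_{j\ge0}e^{-cj\delta}N(I_j)$, and use Minkowski to get $\|W_t\|_4\le(1-e^{-c\delta})^{-1}\sup_{|I|\le\delta}\|N(I)\|_4$. The remaining fact---that counts of the subcritical exponential Hawkes process over intervals of bounded length have uniformly (in location) finite moments of every order---is standard; it can be read off from the moment estimates of \cite{ClinetYoshida2017}, or proved directly from [A2] of Condition~\ref{Cond: Conditions on Quasi MLE under null hypothesis} by writing $N(I)=\int_I\lambda(s;\theta^*)\,ds+\tilde N(I)$, bounding $\|\int_I\lambda\,ds\|_4\le|I|\sup_s\|\lambda(s;\theta^*)\|_4<\infty$, and applying the Burkholder--Davis--Gundy inequality to the purely discontinuous martingale $\tilde N$ (whose quadratic variation is $N$ itself) to get $\E[\tilde N(I)^4]\le C\,\E[N(I)^2]$, the order-two bound being obtained the same way using $\E[\tilde N(I)^2]=\E[\int_I\lambda(s;\theta^*)\,ds]<\infty$. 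Putting the pieces together and summing over $i=0,1,2$ yields \eqref{Eqn: Condition on uniform lambda_g}.

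The step I expect to be the real content is matching the hypothesis to the exponent: we only assume a \emph{fourth} moment on $G$, which is exactly $p=4$, so there is no room for crude devices such as pulling $|G(\X_i)|$ through a maximum. What makes a fourth moment sufficient, and what I would rely on, is conditioning on the point process---which under $H_0$ is independent of the marks---after which the four mark-factors are controlled by Hölder's inequality at exponent $4$ and everything reduces to uniformly finite fourth moments of exponentially weighted Hawkes counts. A minor point to handle carefully is the interchange of $\sup_{\theta\in\Theta}$ with the stochastic integral, which is harmless because $\Theta$ is relatively compact and the integrand is jointly continuous in $\theta$.
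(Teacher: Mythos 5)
Your proof is correct, and it takes a genuinely different route from the paper's. The paper keeps the weight $f_{i,t}(s)=(t-s)^ie^{-\underline\alpha(t-s)}$ intact, decomposes the marked integral into its martingale part against $\tilde N_g^0$ plus its compensator, and then applies the Burkholder--Davis--Gundy inequality \emph{twice} (each time peeling off another martingale/compensator split), handling the marks inside via $\E\bigl[\E[|G_c(\y_s)|\,|\,\calf_{s-}^{\y}]^4\bigr]\le\E[|G_c(\y_s)|^4]$ and Jensen's inequality with the normalized measure $f_{i,t}(s)ds/\int_0^t f_{i,t}$; everything then reduces to the bounds $\sup_t\E[\int_0^t f_{i,t}(s)^q\lambda(s;\theta^*)^{4/q}ds]<\infty$ imported from Lemma A.5 of \cite{ClinetYoshida2017}. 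You instead exploit the independence of $N$ and $\y$ under $H_0$ structurally: conditioning on the whole time process and applying Minkowski in the conditional $L^4$ norm strips the marks out in one stroke, leaving only $\sup_t\E[W_t^4]$ for the exponentially weighted \emph{unmarked} counts, which you settle by a block decomposition plus a single BDG application at the level of $N$. Your version is more elementary and self-contained (it does not need the iterated BDG gymnastics nor Lemma A.5 as a black box, only $\sup_s\|\lambda(s;\theta^*)\|_4<\infty$ from [A2]), and it makes transparent why exactly a fourth moment on $G$ matches $p=4$. The paper's version has the advantage that its template --- martingale/compensator splits with conditional expectations of $g$ kept inside --- carries over with minor changes to the local-alternative setting of Condition \ref{conditionLocalPower} and Lemma \ref{lem deviation alternatives}, where the marks \emph{do} influence the intensity and your conditioning trick (which requires $N\perp\y$) would no longer apply. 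Two small points to tighten if you write this up: state explicitly that under $H_0$ the thinning construction makes $N$ measurable with respect to $\sigma(\overline N)$ alone, hence independent of $\y$, so that $\E[|G(\y_{t_i})|^4\,|\,\sigma(N)]=m_4$; and note that Minkowski for the countable sum is justified by monotone convergence.
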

The proof of Lemma \ref{Lem 1} is given in Appendix \ref{Sec: Proof Lemma 1}.

The final condition concerns the marks. In the remainder some additional notation is helpful. Recall that, under the null hypothesis the true parameter vector is $\nu^* = (\theta^*,\phi^*, 0)$ and the maximum likelihood estimates are $\hat \nu=(\hat \theta_T, \hat \phi_T,0)$. Denote $\mu_H(\phi) =E_{\phi}[H(\X)]$ and if evaluated at $\phi^*$ put $\mu_H = \mu_H(\phi^*)$ and if evaluated at $\hat \phi_T$ put $\hat \mu_H$. We also use the same notation for any consistent estimate of $\mu_H$ such as $\hat \mu_H = \bar H(\X)$, the vector of sample means of components. We let $G(\X) = H(\X)-\mu_H$ at the true value and $\hat G(X) = H(\X)-\hat \mu_H$. 
\begin{condition}\label{Cond: Marks Stat Ergodic}
The marks are from a stationary ergodic process with $\E_{\phi^*}[|G(\X)|^4]<\infty$ and $\hat \mu_H \to^P \mu_H$ as $T \to \infty$.
\end{condition}

\noindent Note that $\hat \mu_H \to^P \mu_H$ holds for either the sample mean estimate (using ergodicity of $\X$), the parametric form, $\hat \mu_H = \mu_H(\hat \phi_T)$ (using consistency of the maximum likelihood estimates $\hat \phi_T$ under appropriate regularity conditions on $f_t(\x;\phi)$) or any other consistent estimates of $\phi$ such as using method of moments.

We now state the main result.  
\begin{thm} \label{Thm: Score Statistic Asymptotic Chi-squared}
	Assume Conditions \ref{Cond: h function},  \ref{Cond: Conditions on Quasi MLE under null hypothesis}, \ref{Cond: Condition on sup lambda_g} and \ref{Cond: Marks Stat Ergodic}. Under $H_0$, the score statistic defined in \eqref{Eqn: Score Statistic reduced} with information matrix $\I_{\psi}(\hat \nu_T)$ estimated by $\hat \I_{\psi}$  defined in \eqref{Eqn: Estimated Ipsipsi under H0, ver 3} satisfies
	\begin{equation} \label{Eqn: Score Stat Asymptotic Chi-squared}
	\hat Q_T \overset{\textrm{d}}{\longrightarrow}  \chi_{(r)}\quad \textrm{as } T\to\infty, \quad r = dim(\psi).
	\end{equation}
\end{thm}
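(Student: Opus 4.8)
The plan is to establish the result by decomposing the score statistic into a quadratic form in an asymptotically normal vector times the inverse of a matrix that converges in probability to the asymptotic variance of that vector. The key observation is that, under $H_0$, the score with respect to $\psi$ evaluated at the true parameter $\nu^*$ has the martingale representation \eqref{Eqn: Score wrt psi under H0}, namely $\p_\psi l_g(\nu^*) = \int_{[0,T]} \lambda(t;\theta^*)^{-1}\p_\psi\lambda_g(t;\nu^*)\,\tilde N(dt)$ with $\tilde N(dt) = N(dt) - \lambda(t;\theta^*)dt$ the compensated counting measure. First I would show that $T^{-1/2}\p_\psi l_g(\nu^*) \to^d \mathcal{N}(0, \Sigma)$ where $\Sigma = \lim_T T^{-1}\I_\psi(\nu^*)$. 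This follows from the martingale central limit theorem for point process stochastic integrals (as in Ogata, or Theorem 3.11 of \cite{ClinetYoshida2017} adapted to the $\psi$-component): the predictable quadratic variation is $\int_{[0,T]}\lambda(t;\theta^*)^{-2}(\p_\psi\lambda_g(t;\nu^*))^{\otimes 2}\lambda(t;\theta^*)dt = \int_{[0,T]}\lambda(t;\theta^*)^{-1}(\p_\psi\lambda_g(t;\nu^*))^{\otimes 2}dt$, and ergodicity of the stationary Hawkes process together with Condition \ref{Cond: Condition on sup lambda_g} (giving the requisite integrability of $\p_\psi\lambda_g$) and Condition \ref{Cond: Marks Stat Ergodic} (giving ergodicity of the marks entering through $G(\x;\phi^*)$) ensures this converges in probability to a deterministic positive definite $\Sigma$; the Lindeberg condition is checked via the $p$th moment bounds in Conditions \ref{Cond: Conditions on Quasi MLE under null hypothesis} A2 and \ref{Cond: Condition on sup lambda_g}.

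The second step is to control the effect of replacing $\nu^*$ by the estimate $\hat\nu_T = (\hat\theta_T,\hat\phi_T,0)$. I would Taylor-expand $T^{-1/2}\p_\psi l_g(\hat\nu_T)$ about $\nu^*$ in the $(\theta,\phi)$ directions:
\begin{align*}
T^{-1/2}\p_\psi l_g(\hat\nu_T) = T^{-1/2}\p_\psi l_g(\nu^*) + \left(T^{-1}\p_{\psi\theta}^2 l_g(\nu^*)\right)\sqrt{T}(\hat\theta_T-\theta^*) + \left(T^{-1}\p_{\psi\phi}^2 l_g(\nu^*)\right)\sqrt{T}(\hat\phi_T-\phi^*) + o_P(1).
\end{align*}
By the block-diagonality of the information matrix guaranteed by Condition \ref{Cond: h function}(iii)--(iv) — in particular $\p_\phi\E_\phi[\p_\psi h] = 0$ at $\nu^*$ — the cross term $T^{-1}\p_{\psi\phi}^2 l_g(\nu^*)$ converges in probability to zero, so the $\hat\phi_T$ correction vanishes asymptotically; this is precisely why the likelihood form for the marks (i.i.d.\ or serially dependent) is irrelevant to the limiting distribution. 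The $\hat\theta_T$ correction does not vanish, but here I would use the standard score-test algebra: because $\hat\theta_T$ is the QMLE under $H_0$, $\sqrt{T}(\hat\theta_T-\theta^*)$ is itself asymptotically a linear functional of $T^{-1/2}\p_\theta l(\theta^*)$, and the joint martingale CLT for $(T^{-1/2}\p_\theta l(\theta^*), T^{-1/2}\p_\psi l_g(\nu^*))$ combined with the block structure shows that the corrected vector $T^{-1/2}\p_\psi l_g(\hat\nu_T)$ is again asymptotically $\mathcal{N}(0,\Sigma)$ — with the \emph{same} $\Sigma$, because the relevant off-diagonal block of the information is the $\psi$-$\theta$ block, which is subtracted but, under block-diagonality in the full $(\theta,\phi,\psi)$ information at $\nu^*$, leaves $\Sigma$ unchanged. (Strictly, one needs that the $\psi$-$\theta$ information block behaves as in the standard profiling argument; this is where I would invoke the explicit form of $\p_\psi\lambda_g$ and the ergodic limits A3.)

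The third step is to show $T^{-1}\hat\I_\psi \to^P \Sigma$, where $\hat\I_\psi$ is the empirical estimator \eqref{Eqn: Estimated Ipsipsi under H0, ver 3}. This requires (a) replacing $N(dt)$ by $\lambda(t;\theta^*)dt$ (valid because $T^{-1}\int\lambda^{-2}(\p_\psi\lambda_g)^{\otimes 2}\tilde N(dt) = o_P(1)$ by the martingale LLN and the moment bounds), (b) the ergodic theorem A3 extended to the mark-dependent integrand, which is legitimate under Conditions \ref{Cond: Condition on sup lambda_g} and \ref{Cond: Marks Stat Ergodic}, and (c) a uniform-continuity-in-$\theta$ argument over the shrinking ball $V_T$ containing $\hat\theta_T$, together with $\hat\phi_T \to^P \phi^*$ and $\hat\mu_H \to^P \mu_H$ so that $\hat G(\x) \to G(\x)$, to pass from $\nu^*$ to $\hat\nu_T$ inside the integrand. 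Finally, assembling Slutsky's theorem gives $\hat Q_T = (T^{-1/2}\p_\psi l_g(\hat\nu_T))^\T (T^{-1}\hat\I_\psi)^{-1} (T^{-1/2}\p_\psi l_g(\hat\nu_T)) \to^d Z^\T\Sigma^{-1}Z$ with $Z \sim \mathcal{N}(0,\Sigma)$, which is $\chi^2_{(r)}$, $r = \dim(\psi)$, since $\Sigma$ is positive definite.

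I expect the main obstacle to be step two: rigorously verifying that the substitution of $\hat\theta_T$ for $\theta^*$ does not alter the limiting covariance — i.e.\ that the $\psi$-$\theta$ adjustment term combines with the raw score to give a vector with variance exactly $\Sigma$ rather than $\Sigma$ minus a correction. This hinges on the precise block structure of the full-model information matrix at $\nu^*$ established via Condition \ref{Cond: h function}, and on adapting the joint martingale CLT and the expansion of the QMLE from \cite{ClinetYoshida2017} (their Theorems 3.9, 3.11 and the Hessian convergence on $V_T$) to include the extra $\psi$-coordinate; handling the remainder terms uniformly over $\theta \in \Theta$ uses Condition \ref{Cond: Condition on sup lambda_g} in an essential way.
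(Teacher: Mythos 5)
Your proposal follows essentially the same route as the paper: a martingale CLT for $T^{-1/2}\p_\psi l_g(\nu^*)$ (the paper's Lemma \ref{Lem 3}, built on the stationary ergodic approximation of Lemma \ref{Lem U ergodicity}), a proof that substituting $\hat\nu_T$ for $\nu^*$ is asymptotically negligible (Lemma \ref{Lem 4}), the convergence $T^{-1}\hat\I_\psi \to^P \Omega$ (Lemma \ref{Lem 5}), and Slutsky. The obstacle you flag in step two is resolved in the paper exactly via the block-diagonality you invoke: since $\E_{\phi}[G(\X)]=0$, the coefficient $T^{-1}\p^2_{\psi\theta}l_g$ converges in probability to the zero matrix, so the $\hat\theta_T$ correction is $o_P(1)\cdot O_P(1)$ and the whole difference $T^{-1/2}(\p_\psi l_g(\hat\nu_T)-\p_\psi l_g(\nu^*))$ vanishes outright, rather than merely leaving the limiting covariance unchanged.
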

The proof is given in Appendix \ref{Sec: Proof Thm 1}.
\section{Local power} \label{Sec: Local Power}
We now investigate what happens to the distribution of the score statistic when $H_0$ fails, that is when the mark process impacts the distribution of the jump times of the point process. We adopt the local power approach, which consists in considering the sequence of local alternatives $H_1^T : \psi_T^* = \gamma^*/\sqrt{T}$ for some unkown $\gamma^*$. We therefore assume that the marks weakly impact the distribution of the jump times (with a magnitude of order $1/\sqrt{T}$), so that for a given $T >0$, the associated counting process is nearly a pure Hawkes process. Our goal is to derive the asymptotic distribution of the score statistic under the local alternatives $H_1^T$. Following Proposition \ref{propExistence}, we thus assume that we observe a sequence of marked Hawkes processes $N_g^T$, all defined on (and adapted to) the same probability space $(\Omega, \calf, \mathbb{P})$. Note that we adopt the notation $N_g^T$ because, in contrast with the null hypothesis, the point process now depends on $T$. Moreover, we assume that all the marked Hawkes processes indexed by $T$ are generated by the random measure $\bar{N}_{g}$ on $\mathbb{R}^2 \times \mathbb{X}$, such that the normalized boost function of $N_g^T$ is $g(., \phi^*, \psi_T^*)$, that is, for any $t \in \mathbb{R}_+$, $N_{g}^T$ admits the following stochastic intensity: 
$$\lambda_g^{T}(t; \theta^*,\phi^*,\psi_T^*) = \eta^* + \vartheta^* \int_{(-\infty,t) \times \mathbb{X} } w(t-s;\alpha^*) g(\x; \phi^*,\psi_T^*)N_g^T(ds \times d\x),$$
for some unkown parameter $\nu_T^* = (\theta^*, \phi^*, \psi_T^*)$. The expression of the score statistic is naturally adapted to    
\begin{equation} \label{scoreStatisticNewDef}
\p_\psi l_g^T(\hat \nu_T)^\T\I_{\psi}(\hat \nu_T)^{-1} \p_\psi l_g^T(\hat \nu_T),
\end{equation}
where $l_g^T$ admits the same expression as in (\ref{Eqn: log likelihood Unmarked Hawkes}), replacing the pure Hawkes process $N(dt)$ by the counting process $N^T(dt) = N_g^T(dt,\mathbb{X})$. Similarly, in (\ref{scoreStatisticNewDef}), $\hat{\nu}_T = (\hat{\theta}_T, \hat{\phi}_T, 0)$, where $\hat{\theta}_T$ is one maximizer of $l_g^T$ in the interior of $\Theta$, and $\hat{\phi}_T$ is a consistent estimator of $\phi^*$. As stated in Theorem \ref{thm LocalPower} below, it turns out that under $H_1^T$, $\hat{Q}_T$ tends to a non central chi-squared distribution, whose non-centrality parameter depends on $\gamma^*$ and on the inverse of the Fisher information matrix (at point $\psi = 0$), $\Omega$. In order to ensure the convergence of $\hat{Q}_T$, we make the following assumptions.

\begin{condition}\label{conditionLocalPower}
	For $p = (dim(\Theta) + 1) \vee 4$, we assume the existence of $\epsilon >0$ such that, defining $\mathcal{U} = \Theta \times \{\phi^*\}\times \mathcal{B}(0,\epsilon)$ where $\mathcal{B}(0,\epsilon)$ is the open ball of radius $\epsilon$,
	$$ \sup_{T \in \Rp}\sup_{t \in [0,T]} \sum_{i=0}^3 \E \left[ \sup_{\nu \in \mathcal{U}} |\partial_\theta^i \lambda_g^{T}(t;\nu)|^p\right] < + \infty.$$
	Moreover,
	$$ \sup_{T \in \Rp}\sup_{t \in [0,T]} \sum_{i=0}^2 \E \left[ \sup_{\nu \in \mathcal{U}} |\partial_\theta^i \partial_\psi \lambda_g^{T}(t;\nu)|^p\right] < + \infty.$$
	Moreover, assume that there exists $\epsilon >0$ such that
	\begin{equation} \label{momentLocalPower}
	\E \sup_{\psi \in \mathcal{B}(0,\epsilon) } \left| \partial_\psi g(\x;\phi^*,\psi)\right|^p < +\infty.
	\end{equation}
	Finally, for $q \in \{1,2\}$, defining $\mathcal{A} = \{\alpha | \exists (\eta, \vartheta) \textnormal{ s.t. } (\eta,\vartheta,\alpha) \in \Theta \}$, we assume the existence of $\bar{w}$ such that for any $\alpha \in \mathcal{A}$, for any $t \geq 0$, $w(t;\alpha) \leq \bar{w}(t),$ and 
	\begin{equation} \label{uniform w localpower}
	\int_0^{+\infty} \bar{w}(t)^q  dt< \infty.
	\end{equation} 
	
\end{condition}

Condition \eqref{uniform w localpower} is satisfied for the exponential decay function under the conditions stated above for $\alpha$. For suitable choice of a compact parameter space for the power law decay function, a two parameter family of decay functions, the condition is also satisfied without placing undue restrictions on the parameter space.

\begin{lem}\label{lem exponentialHawkes localPower}
	Condition \ref{conditionLocalPower} is satisfied for the exponential kernel case ($dim(\Theta)=3$) and for stationary marks satisfying (\ref{momentLocalPower}).  
\end{lem}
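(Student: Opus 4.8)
Here $p=(\dim(\Theta)+1)\vee 4=4$, and the plan is to split Condition \ref{conditionLocalPower} into a ``kernel'' part, immediate for the exponential decay, and an ``intensity'' part, which I reduce to a single uniform-in-$T$ moment estimate on the observed marked process and then dispatch exactly as in the proof of Lemma \ref{Lem 1}. For the kernel part, (\ref{momentLocalPower}) is a hypothesis of the lemma, and for $w(t;\alpha)=\alpha e^{-\alpha t}$ with $\underline{\alpha}\le\alpha\le\bar{\alpha}$ one has $w(t;\alpha)\le\bar{\alpha}e^{-\underline{\alpha}t}=:\bar{w}(t)$ (since $\alpha\le\bar{\alpha}$ and $e^{-\alpha t}\le e^{-\underline{\alpha}t}$ for $t\ge 0$), with $\int_0^{+\infty}\bar{w}(t)^q\,dt=\bar{\alpha}^q/(q\underline{\alpha})<\infty$ for $q\in\{1,2\}$, so (\ref{uniform w localpower}) holds. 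It then remains to bound, uniformly over $T$, $t\in[0,T]$ and $\nu\in\mathcal{U}$, the $L^p$ norms of $\partial_\theta^i\lambda_g^T(t;\nu)$ for $i\le 3$ and of $\partial_\theta^i\partial_\psi\lambda_g^T(t;\nu)$ for $i\le 2$.

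The core of the argument is the uniform-in-$T$ weighted-count bound
\[
\sup_{T}\ \sup_{u\ge 0}\ \Bigl\| \int_{(u,u+1]\times\mathbb{X}} G_0(\x)\,N_g^T(ds\times d\x)\Bigr\|_p<+\infty,\qquad G_0(\x):=1\vee\sup_{\psi\in\mathcal{B}(0,\epsilon)}|\partial_\psi g(\x;\phi^*,\psi)|,
\]
where $G_0$ belongs to $L^p$ of the mark law by (\ref{momentLocalPower}) together with the Taylor bound $\sup_{\psi\in\mathcal{B}(0,\epsilon)}|g(\x;\phi^*,\psi)|\le 1+\epsilon\sup_{\psi\in\mathcal{B}(0,\epsilon)}|\partial_\psi g(\x;\phi^*,\psi)|$. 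Since $\psi_T^*=\gamma^*/\sqrt{T}\to 0$ and $g(\cdot;\phi^*,0)\equiv 1$, after possibly reducing $\epsilon$ and discarding a bounded range of $T$ (irrelevant for the asymptotic conclusion of Theorem \ref{thm LocalPower}) we have $\E[g(\y_t;\phi^*,\psi_T^*)\mid\calf_{t-}^{\y}]\le C<1/\vartheta^*$ with $C$ not depending on $T$; feeding this into the moment computation of the proof of Proposition \ref{propExistence} gives $\sup_{T}\sup_{s}\E[\lambda_g^T(s)]\le\eta^*/(1-C\vartheta^*)$ with $T$-independent constants. Upgrading this first-moment bound to the $p$-th moment bound above is then done as in \cite{ClinetYoshida2017} and in the proof of Lemma \ref{Lem 1}: split $\int_{(u,u+1]\times\mathbb{X}}G_0\,dN_g^T$ into its compensated martingale part, controlled by the Burkholder--Davis--Gundy inequality, and its predictable part $\int_{(u,u+1]}\bigl(\int_{\mathbb{X}}G_0(\x)F_s(d\x)\bigr)\lambda_g^T(s)\,ds$, controlled by H\"older's inequality using that $\lambda_g^T$ has uniformly bounded moments of every order and that $\int_{\mathbb{X}}G_0(\x)F_s(d\x)=\E[G_0(\y_s)\mid\calf_{s-}^{\y}]$ inherits, by Jensen's inequality and stationarity, the integrability of $G_0(\y_0)$.

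Given this estimate the remainder parallels the proof of Lemma \ref{Lem 1}. Up to an additive constant (coming from the $\eta$ term), each $\partial_\theta^i\lambda_g^T(t;\nu)$ is a finite sum of terms $c\int_{[0,t)\times\mathbb{X}}\partial_\alpha^{j}w(t-s;\alpha)\,g(\x;\phi^*,\psi)\,N_g^T(ds\times d\x)$ with $c\in\{1,\vartheta\}$ and $j\le i$, and likewise $\partial_\theta^i\partial_\psi\lambda_g^T(t;\nu)$ is a finite sum of terms $c\int_{[0,t)\times\mathbb{X}}\partial_\alpha^{j}w(t-s;\alpha)\,\partial_\psi g(\x;\phi^*,\psi)\,N_g^T(ds\times d\x)$; for the exponential kernel $\sup_{\alpha\in[\underline{\alpha},\bar{\alpha}]}|\partial_\alpha^{j}w(s;\alpha)|\le\bar{w}_i(s):=C_i\sum_{m=0}^{i}s^{m}e^{-\underline{\alpha}s}$, which satisfies $\sum_{k\ge 0}\sup_{s\in[k,k+1]}\bar{w}_i(s)<\infty$. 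Taking the supremum over $\mathcal{U}$ inside each integral (bounding $\vartheta\le\bar{\vartheta}$ and $\sup_{\psi\in\mathcal{B}(0,\epsilon)}|\cdot|\le G_0$), splitting $[0,t)$ into the unit intervals $(t-k-1,t-k]$, applying Minkowski's inequality together with the weighted-count bound termwise, and summing $\sum_{k\ge 0}\sup_{s\in[k,k+1]}\bar{w}_i(s)$ yields the two required uniform $L^p$ bounds.

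The main obstacle is the uniform-in-$T$ weighted-count estimate, since the law of $N_g^T$ genuinely varies with $T$. It is resolved by observing that the local-alternative scaling $\psi_T^*=O(T^{-1/2})$ forces $g(\cdot;\phi^*,\psi_T^*)\to 1$ and hence keeps the effective branching $C\vartheta^*$ bounded away from $1$ uniformly in $T$, so the fixed-point and moment estimates of Proposition \ref{propExistence} and of \cite{ClinetYoshida2017} carry over with $T$-independent constants, while (\ref{momentLocalPower}) supplies exactly the integrability needed to integrate the unbounded mark weight $G_0$ against $N_g^T$.
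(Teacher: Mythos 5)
Your proposal is correct and follows essentially the same route as the paper, whose proof of Lemma \ref{lem exponentialHawkes localPower} is simply the observation that the argument of Lemma \ref{Lem 1} goes through verbatim once the fourth-moment condition on $G(\x)$ is replaced by the local uniform condition (\ref{momentLocalPower}). You additionally make explicit the one genuinely new point that the paper leaves implicit — that the local scaling $\psi_T^*=\gamma^*/\sqrt{T}$ keeps the effective branching ratio bounded away from $1$ uniformly in $T$, so the moment estimates for $N_g^T$ hold with $T$-independent constants — which is a worthwhile clarification rather than a deviation.
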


\begin{proof}
	The proof follows exactly the same path as that of Lemma \ref{Lem 1}, replacing the fourth order moment condition on $G(\x)$ by the local uniform condition (\ref{momentLocalPower}).
\end{proof}

Before we state the main result of this section, we define 
\begin{eqnarray*}
\Omega = \mathbb{P}-\lim_{T \to + \infty} T^{-1}\I_{\psi}(\nu^*),
\end{eqnarray*}
where we recall that $\I_{\psi}(\nu^*)$ was defined in (\ref{Eqn: Ipsipsi under H0, ver 3}) and corresponds to the Fisher information matrix associated to $\psi$, at point $\psi = 0$, under the null hypothesis. We prove that such a limit exists in Appendix B (Lemma \ref{Lem 3}). We can now state the following theorem.
\begin{thm}\label{thm LocalPower}
	Assume Conditions 1,2,3,4 and 5. Under $H_1^T : \psi_T^* = \gamma^*/\sqrt{T}$, we have 
	$$ \hat{Q}_T \to^d  \chi^2(\Omega^{1/2}\gamma^*),$$
	where $\chi^2(\Omega^{1/2}\gamma^*) \sim \|Z\|^2$ with $Z \sim \mathcal{N}(\Omega^{1/2}\gamma^*,1)$.
\end{thm}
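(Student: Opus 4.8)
The plan is to reduce the local-alternative analysis to the null-hypothesis machinery already developed for Theorem \ref{Thm: Score Statistic Asymptotic Chi-squared}, exploiting the fact that $N_g^T$ is built from the same driving measure $\bar N_g$ for all $T$, so that a contiguity (or direct pathwise comparison) argument connects the law under $H_1^T$ to the law under $H_0$. Concretely, I would first establish that the key statistics computed under $H_1^T$ differ from their $H_0$-counterparts by terms that are $o_P(1)$ after the appropriate $\sqrt T$ scaling. Since $\psi_T^* = \gamma^*/\sqrt T \to 0$, the intensity $\lambda_g^T(t;\nu_T^*)$ converges pathwise to the pure Hawkes intensity $\lambda(t;\theta^*)$, and a Gronwall-type estimate on the thinning recursion (as in the proof of Proposition \ref{propExistence}) controls the difference $\lambda_g^T - \lambda$ uniformly on $[0,T]$ in $L^p$; Condition \ref{conditionLocalPower}, in particular the uniform-in-$T$ moment bounds on $\lambda_g^T$ and its derivatives and the local uniform bound \eqref{momentLocalPower} on $\partial_\psi g$, is exactly what makes these estimates go through. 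This step lets me transfer the consistency $\hat\theta_T \to^P \theta^*$ and the convergence $T^{-1}\hat\I_\psi \to^P \Omega$ (using Lemma \ref{Lem 3}) from the null setting.

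Next I would analyze the $\psi$-score $\partial_\psi l_g^T(\hat\nu_T)$ under $H_1^T$. Writing it, as in \eqref{Eqn: Score wrt general psi under H0}, as an integral of $\lambda^{-1}\partial_\psi\lambda_g^T$ against the innovation measure $N^T(dt) - \lambda(t;\hat\theta_T)dt$, I would split this into (i) a martingale part, which is $T^{-1/2}\partial_\psi l_g^{T}(\nu_T^*)$ essentially, plus (ii) a deterministic drift coming from the fact that the true intensity generating $N^T$ is $\lambda_g^T(t;\nu_T^*)$ rather than $\lambda(t;\theta^*)$, plus (iii) a remainder from replacing $\theta^*$ by $\hat\theta_T$ and $\phi^*$ by $\hat\phi_T$. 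The drift (ii) is where the non-centrality is born: a first-order Taylor expansion in $\psi$ around $0$ gives $\lambda_g^T(t;\nu_T^*) - \lambda(t;\theta^*) \approx \psi_T^{*\T}\partial_\psi\lambda_g(t;\nu^*) = T^{-1/2}\gamma^{*\T}\partial_\psi\lambda_g(t;\nu^*)$, and integrating $\lambda^{-1}\partial_\psi\lambda_g$ against this drift, after $T^{-1/2}$ normalization and invoking the ergodic limit, converges in probability to $\Omega\gamma^*$. The correction for $\hat\theta_T$ uses the block-diagonality of the full information matrix (Condition \ref{Cond: h function}) together with $\sqrt T(\hat\theta_T - \theta^*) = O_P(1)$ exactly as in the proof of Theorem \ref{Thm: Score Statistic Asymptotic Chi-squared}, so the $\theta$-estimation contribution to the $\psi$-score is asymptotically orthogonal and does not shift the mean; the $\hat\phi_T$ correction vanishes by Condition \ref{Cond: h function}(iii)--(iv) and Condition \ref{Cond: Marks Stat Ergodic}.

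For the martingale part (i), I would apply a martingale central limit theorem (as invoked for the null case via \cite{ClinetYoshida2017}) to get $T^{-1/2}\partial_\psi l_g^T(\nu_T^*) \to^d \mathcal N(0,\Omega)$; the predictable quadratic variation is $T^{-1}\I_\psi(\nu_T^*) \to^P \Omega$ by the transfer argument above, and the Lindeberg condition follows from the uniform moment bounds in Condition \ref{conditionLocalPower}. Combining (i), (ii) and (iii) yields $T^{-1/2}\partial_\psi l_g^T(\hat\nu_T) \to^d \mathcal N(\Omega\gamma^*, \Omega)$. Finally, plugging into \eqref{scoreStatisticNewDef} and using $T^{-1}\hat\I_\psi \to^P \Omega > 0$, Slutsky's theorem gives $\hat Q_T \to^d (\mathcal N(\Omega\gamma^*,\Omega))^\T \Omega^{-1} (\mathcal N(\Omega\gamma^*,\Omega)) = \|Z\|^2$ with $Z \sim \mathcal N(\Omega^{1/2}\gamma^*, I_r)$, which is the claimed non-central chi-squared.

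The main obstacle I anticipate is making the pathwise/$L^p$ comparison between $N_g^T$ (under $H_1^T$) and the pure Hawkes process rigorous uniformly in $T$ on the growing window $[0,T]$ — in particular controlling the remainder in the Taylor expansion of $\lambda_g^T$ in $\psi$ and showing it contributes $o_P(1)$ after $\sqrt T$-scaling of the score. This is where Condition \ref{conditionLocalPower}'s uniform-in-$T$ structure is essential, and it is the step that most directly goes beyond the null-hypothesis proof; the CLT and the algebraic bookkeeping for the non-centrality are comparatively routine once that comparison is in hand.
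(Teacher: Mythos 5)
Your proposal is correct and follows essentially the same route as the paper: a pathwise/$L^p$ coupling of $N_g^T$ with the pure Hawkes process $N^{(0)}$ driven by the same measure $\bar N_g$ (established via a Gronwall-type renewal estimate giving $O(T^{-1/2})$ uniform bounds, the paper's Lemma \ref{lem deviation alternatives}), transfer of consistency and of $T^{-1}\hat\I_\psi\to^P\Omega$, and a martingale-plus-drift decomposition of the $\psi$-score in which the drift term, after a mean-value Taylor expansion of $g$ in $\psi$, produces the non-centrality $\Omega\gamma^*$ (the paper's Lemmas \ref{lem consistencyMLE local power}--\ref{lem Fisher local power}). The obstacle you flag — making the comparison uniform in $t\in[0,T]$ as $T$ grows — is exactly where the paper expends its effort, resolved by the assumption $\E[g(\y_s;\phi^*,\psi_T^*)\,|\,\calf_{s-}^{\y}]\le C<1/\vartheta$ together with Condition \ref{conditionLocalPower}.
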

The proof is in Appendix \ref{Sec: Proof Thm 2}.
\section{Conclusions and Future Extensions} \label{Sec: Conclusions etc}

In this paper we have derived the asymptotic distribution of the score test proposed for determining if marks have an impact on the intensity of a single Hawkes process. Quite general boost functions can be formulated in this setting. We prove that the asymptotic distribution under the null hypothesis that there is no impact of the proposed marks on the intensity process is the usual chi-squared distribution with degrees of freedom equal to the number of parameters specified for the marks boost function. 
These asymptotic results rely heavily on the large sample results for quasi-likelihood estimation of multivariate unmarked Hawked process considered in \cite{ClinetYoshida2017}. In addition to their assumptions on the null hypothesis model specification and parameters, because the score test involves functions of the marks, one additional assumption (Condition \ref{Cond: Condition on sup lambda_g})  is required, and this is shown to hold in the exponential decay case (see Lemma \ref{Lem 1}).

The marks process can be quite general and includes marks obtained from observations on a continuous time vector valued process in which there is serial dependence as well as dependence between components of the mark vector. The main requirement is that the marks have finite fourth moment.  

For local power computations, we have also derived the non-central chi-squared limiting distribution for the score test statistic under a sequence of local alternatives with the boost parameter converging to the null hypothesis value at rate $T^{-1/2}$. 

Establishing consistency of the score test requires a proof that the power tends to unity for any value of $\psi \ne 0$. However, establishing this rigorously requires proving the ergodicity of the point process along with substantial extensions to existing asymptotic theory for likelihood estimation in marked Hawkes processes. The main technical challenge for establishing this is showing that the asymptotic score w.r.t. $\phi$ is non-degenerate. Here a major difficulty arises because the existence of multiple stationary values in the limiting likelihood function of $(\theta,\phi)$ when $\psi \ne 0$ cannot be ruled out easily.

Crucial to establishing the conditions required for the results of \cite{ClinetYoshida2017} as well as our additional Condition \ref{Cond: Condition on sup lambda_g} is the Markovian nature of the Hawkes intensity process with an exponential decay function. Extension to decay functions which are linear combinations of exponential kernels retain the Markov property and so extension of above results should be straightforward. For other kernels, such as the power law decay function, the Markov property does not hold and hence extension of our results would require substantial and fundamental theory to extend known results in the literature firstly in the unmarked Hawkes processes and secondly in the marked case.
Because \cite{ClinetYoshida2017} also establish the required asymptotic theory of likelihood estimation for a multivariate unmarked Hawkes process and because the form of the score statistic for a marked multivariate Hawkes process is of the same basic form as for the univariate Hawkes process the results of this paper should readily extend to the multivariate case and could be the topic of future research.
\appendix
\section{Proof of Lemma \ref{Lem 1}}\label{Sec: Proof Lemma 1}
For any $c \in \R^r$ we denote the linear combinations $G_c(\X)= c^\T G(\X)$ and similarly for $\hat G_c(\X)$. We use the notation $N_g^0$ for the point process generated under $H_0$. This point process has event intensity identical to that of $N$ defined in \eqref{Eqn: Intensity Process Null Hyp}. Marks are observed at the event times of this process but do not impact the intensity of it.
For the exponential decay specification, since $dim(\Theta) = 3$, we need to show Condition \ref{Cond: Condition on sup lambda_g} for $p = 4$.
\begin{equation*}\label{Eqn: Extended A2}
\sup_{t \in \Rp} \E[\sup_{\theta \in \Theta }|\p^i_\theta \{\vartheta\int_{[0,t)\times\mathbb{X}}  w(t-s;\alpha)G_c(\x)N_g^0(ds \times d\x)\}|^p] < \infty
\end{equation*}
for $i = 0, 1, 2$. Notice that only the derivatives with respect to $\vartheta$ and $\alpha$ are required. These derivatives are linear combinations of terms of the form 
\begin{equation*}
\vartheta^{k}\int_{[0,t)\times\mathbb{X}}  \p_\alpha^i w(t-s;\alpha)G_c(\x)N_g^0(ds \times d\x)
\end{equation*}
for $i = 0,1,2$ and $k=0,1$, and with $w(t-s;\alpha)= e^{-\alpha (t-s)}$. Since $\Theta$ is bounded, we consider the integrals which are finite combinations of terms of the form
\begin{equation*}
\int_{[0,t)\times\mathbb{X}} (t-s)^i e^{-\alpha(t-s)}G_c(\x)N_g^0(ds \times d\x) \leq \int_{[0,t)\times\mathbb{X}} (t-s)^i e^{-\underline{\alpha}(t-s)}G_c(\x)N_g^0(ds \times d\x)
\end{equation*}
for $i=0,1,2$, and where $0 < \underline{\alpha} = \inf \{\alpha | \exists (\eta,\vartheta), (\eta,\alpha,\vartheta)  \in \Theta \} $. Therefore, we need to show to conclude the proof that 
\begin{equation*}\label{Eqn: Lemma 4 key inequality}
\sup_{t \in \Rp} \E|\int_{[0,t)\times\mathbb{X}}  (t-s)^i e^{-\underline\alpha(t-s)}G_c(\x)N_g^0(ds \times d\x)|^p<\infty, \quad i=0,1,2.
\end{equation*}
where $p=4$. Let $f_{i,t}(s)=(t-s)^i \exp(-\underline\alpha(t-s))$. For any $t \geq 0$, we have 
\begin{align*}
&\E[(\int_{[0,t)\times \mathbb{X}}f_{i,t}(s)G_c(\x)N_g^0(ds \times d\x) )^4]\\
&\le C \E[(\int_{[0,t)\times \mathbb{X}}f_{i,t}(s)|G_c(\x)|\tilde N_g^0(ds \times d\x))^4]\\
& \quad+C\E[(\int_{[0,t)\times \mathbb{X}}f_{i,t}(s)|G_c(\x)|F_s(d\x) \lambda(s;\theta^*)ds )^4]
\end{align*}
for some finite constant $C$ and where the compensator of $N_g^0(ds \times d\x)$ is $ \lambda(s;\theta^*) F_s(d\x)ds$ where $F_s(d\x)$ is the conditional distribution of $\y_s$ with respect to $\calf_{s-}^\y$. 

First define the probability measure $\mu(ds) = (\int_0^t f_{i,u} du)^{-1} f_{i,s}ds$ on $[0,t]$, and apply Jensen's inequality to the second term to get
\begin{align*}
&\E[(\int_{[0,t)\times \mathbb{X}}f_{i,t}(s)|G_c(\x)|F_s(d\x)\lambda(s;\theta^*)ds)^4]\\
&= (\int_{[0,t)}f_{i,t}(s)ds)^4 \E[(\int_{[0,t)\times \mathbb{X}} |G_c(\x)|F_s(d\x)\lambda(s;\theta^*) \mu(ds))^4]\\ 
&\leq (\int_{[0,t)}f_{i,t}(s)ds)^3 \E[\int_{[0,t)}  f_{i,t}(s) (\int_{\mathbb{X}}|G_c(\x)|F_s(d\x))^4 \lambda(s;\theta^*)^4ds]\\
&\leq (\int_{[0,t)}f_{i,t}(s)ds)^3 \E[\int_{[0,t)}f_{i,t}(s)\lambda(s;\theta^*)^4 \E[\E[|G_c(\x)| | \calf_{s-}^\y]^4]ds]\\
&\leq C, 
\end{align*}
Where we have used the independence of $\y$ and $N_g^0$, the fact that $\E[\E[|G_c(\y_s)| | \calf_{s-}^\y]^4]] < \E[|G_c(\y_s)|^4] <  K$ for some constant $K >0$, and $\sup_{t \in \Rp}\E[\int_{[0,t)}f_{i,t}(s)\lambda(s;\theta^*)^4ds]<\infty$ by \citep[Lemma A.5]{ClinetYoshida2017}. Consider now the first expected value. Using Davis-Burkholder-Gundy inequality we have arguing similarly to \citep[Lemma A.2]{ClinetYoshida2017}, for some constant $C<\infty$ not necessarily the same as above,
\begin{align*}
&\E[(\int_{[0,t)\times \mathbb{X}}f_{i,t}(s)|G_c(\x)|\tilde N_g^0(ds \times d\x))^4]\\
& \le C \E[(\int_{[0,t)\times \mathbb{X}}f_{i,t}(s)^2|G_c(\x)|^2 N_g^0(ds \times d\x))^2]\\
& \le 2 C \E[(\int_{[0,t)\times \mathbb{X}}f_{i,t}(s)^2|G_c(\x)|^2 \tilde N_g^0(ds \times d\x))^2] \\
& \quad + 2 C \E[(\int_{[0,t)\times \mathbb{X}}f_{i,t}(s)^2|G_c(\x)|^2\lambda(s;\theta^*)F_s(d\x)ds)^2].
\end{align*}
Similarly to the previous argument the second term is uniformly bounded because $$\sup_{t \in \Rp}\E[(\int_{[0,t)}f_{i,t}(s)^2\lambda(s;\theta^*)^2ds)^2]<\infty$$ by \citep[Lemma A.5]{ClinetYoshida2017} and $\E[\E[|G(\X)^2|\calf_{s-}^\y]^2] < \E|G_c(\y_s)|^4 < K$ for some constant $K >0$. For the first term we have
\begin{align*}
&\E[(\int_{[0,t)\times \mathbb{X}}f_{i,t}(s)^2|G_c(\x)|^2 \tilde N_g^0(ds \times d\x))^2] \\
& = \E[\int_{[0,t)\times \mathbb{X}}f_{i,t}(s)^4|G_c(\x)|^4 \lambda(s;\theta^*)F_s(d\x)ds]\\
& = \E[\int_0^t f_{i,t}(s)^4\lambda(s;\theta^*)\E|G_c(\y_s)|^4ds],
\end{align*}
where we have used the independence of $\y$ and $N_g$. Now, $\E|G_c(\y_s)|^4 < \infty$ and, once more by \citep[Lemma A.5]{ClinetYoshida2017} we have 
$$\sup_{t \in \Rp}\E[\int_{[0,t)}f_{i,t}(s)^4\lambda(s;\theta^*)ds]<\infty$$
which completes the proof.

\section{Proof of Theorem 1} \label{Sec: Proof Thm 1}
Define, for any fixed $c$,
\begin{equation}\label{Eqn: U(t) definition}
U(t;\theta,\phi) = c^\T \p_\psi \lambda_g(t;\theta,\phi,0) = \vartheta \int_{[0,t)\times\mathbb{X}} w(t-s;\alpha) c^\T G(\x;\phi) N_g(ds \times d\x)
\end{equation}
This notation is used repeatedly in the proof of the theorem as well as the lemmas used. The proof follows somewhat closely that of \cite{ClinetYoshida2017}. We first consider the normalized process corresponding to \eqref{Eqn: Score wrt psi under H0} and for any non zero vector of constants $c\in \R^r$ define the process in  $u \in [0,1]$
\begin{align}\label{Eqn: SuT process}
S_u^T  &=  \frac{1}{\sqrt{T}}\int_{[0,uT]}  \lambda(t;\theta^{*})^{-1}c^\T\p_\psi\lambda_g(t;\nu^{*})  \tilde N(dt)\nonumber\\
&= \frac{1}{\sqrt{T}}\int_{[0,uT]}  \lambda(t;\theta^{*})^{-1} U(t;\theta^*, \phi^*)\tilde N(dt).
\end{align}
Note that $S_1^T=\frac{1}{\sqrt{T}}c^\T\p_\psi l_g(\nu^{*})$. Similarly to \cite{ClinetYoshida2017}, we establish a functional CLT when $T \to \infty$.

The proof of this theorem proceeds via several lemmas. Convergence throughout is with $T\to\infty$. The first lemma is concerned with the ergodic properties of  $U(t;\theta,\phi)$ defined in \eqref{Eqn: U(t) definition} when $\phi=\phi^*$, is fixed at the true value in which case we further abbreviate notation to $U(t;\theta)= U(t;\theta,\phi^*)$.

\begin{lem} \label{Lem U ergodicity} 
	There exists a stationary Hawkes point process $N^{\infty}$ on the original probability space $(\Omega, \calf,\mathbb{P})$, adapted to $\calf_t$ and defined on $\mathbb{R}$ such that: (i) $N^{\infty}$ and $\y$ are independent. (ii) the stochastic intensity of $N^{\infty}$ admits the representation $ \lambda^{\infty}(t) = \eta + \vartheta^*\int_{(-\infty,t)} w(t-s;\alpha^*)N^{\infty}(ds)$.
	Moreover, let us define $N_g^{\infty}$ as the marked point process which jumps at points of the form $(t_i^{\infty}, \y_{t_i^\infty})$ where $t_i^\infty$ are the jump times of $N^\infty$. Accordingly, we define 
	$$ U^{\infty}(t;\theta) = \vartheta \int_{(-\infty,t)\times\mathbb{X}} w(t-s;\alpha) G_{c}(\x) N_g^\infty(ds \times d\x).$$
	Then, the joint process $(\lambda^\infty, U^{\infty}(.;\theta^*))$ is stationary ergodic. Finally we have the convergence
	\begin{equation}\label{Eqn: deviation L1}
	\E | \lambda(t,\theta^*) -  \lambda^{\infty}(t) | + \E | U(t;\theta^*) - U^\infty(t;\theta^*)| \to 0, t \to +\infty.
	\end{equation}
\end{lem}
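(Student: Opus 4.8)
The plan is to realize $N^{\infty}$ through the thinning construction of Proposition \ref{propExistence}, applied on the original space $(\Omega,\calf,\mathbb{P})$ with $\psi=0$ and $\phi=\phi^{*}$, and then to read off all four assertions from properties of that construction together with the stability estimate used in its proof. Since $g(\x;\phi^{*},0)\equiv1$, we have $\E[g(\y_{t};\phi^{*},0)\mid\calf_{t-}^{\y}]=1$, so Proposition \ref{propExistence}'s hypothesis holds with any constant $C\in[1,1/\vartheta^{*})$ --- a nonempty interval because stability of the Hawkes process requires $\vartheta^{*}<1$ --- and $\int_{0}^{\infty}w(s;\alpha^{*})\,ds=1$. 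This yields a marked point process $N_{g}^{\infty}$ on $\mathbb{R}\times\mathbb{X}$ with points $(t_{i}^{\infty},\y_{t_{i}^{\infty}})$; its time-marginal $N^{\infty}:=N_{g}^{\infty}(\cdot\times\mathbb{X})$ has, by Proposition \ref{propExistence}(i) with $g\equiv1$, exactly the stochastic intensity $\lambda^{\infty}$ of assertion (ii), and by Proposition \ref{propExistence}(iii), together with stationarity of $\y$ (Condition \ref{Cond: Marks Stat Ergodic}), $N_{g}^{\infty}$ is stationary; this gives (ii) and the stationarity part of (iii).

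For the independence in (i), I would inspect the recursion \eqref{systemHawkes} at $\psi=0$: because $g\equiv1$, the iterates $\lambda_{g,n}$ and the time-locations of the points of $N_{g,n}$ are measurable functions of $\overline{N}$ alone, with no dependence on $\y$. Passing to the increasing limit, $\lambda^{\infty}$ and $N^{\infty}$ are $\sigma(\overline{N})$-measurable, and since $\overline{N}$ is independent of $\y$ this proves (i); the marks enter only as the evaluations $\y_{t_{i}^{\infty}}$ of $\y$ along these $\overline{N}$-measurable locations.

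Ergodicity of $(\lambda^{\infty},U^{\infty}(\cdot;\theta^{*}))$ is the crux. Both coordinates are obtained from the pair $(\overline{N},\y)$ by a single measurable map that commutes with the time-shift: the thinning dynamics are time-homogeneous, and $U^{\infty}(t;\theta^{*})=\vartheta^{*}\sum_{t_{i}^{\infty}<t}w(t-t_{i}^{\infty};\alpha^{*})G_{c}(\y_{t_{i}^{\infty}})$ is shift-equivariant in $(\overline{N},\y)$. It therefore suffices to show $(\overline{N},\y)$ is ergodic under the time-shift flow. Now $\overline{N}$ is a homogeneous Poisson process, hence mixing under translations, $\y$ is stationary ergodic by Condition \ref{Cond: Marks Stat Ergodic}, and the two are independent; since the product of a mixing flow and an ergodic flow is ergodic, $(\overline{N},\y)$ is ergodic, and so is its factor $(\lambda^{\infty},U^{\infty}(\cdot;\theta^{*}))$. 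The same argument in fact delivers ergodicity of the whole family $(\lambda^{\infty},(U^{\infty}(\cdot;\theta))_{\theta\in\Theta})$.

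Finally, for the $L^{1}$ deviation \eqref{Eqn: deviation L1}: $\lambda(\cdot,\theta^{*})$ and $\lambda^{\infty}$ are thinnings of the \emph{same} $\overline{N}$, the former started at $0$ (with $\lambda(t_{0})=C_{0}$) and the latter at $-\infty$. Setting $D(t)=\E|\lambda(t,\theta^{*})-\lambda^{\infty}(t)|$ and repeating the $\rho_{n}$-type computation from the proof of Proposition \ref{propExistence} --- this is precisely the estimate underlying Theorem 1 of \cite{bremaud1996} --- gives a renewal inequality $D(t)\le b(t)+\vartheta^{*}\int_{0}^{t}w(t-s;\alpha^{*})D(s)\,ds$, where $b(t)$ collects the pre-$0$ history omitted by $N$ and the effect of the initialization $C_{0}$ and is $O\bigl(\int_{t}^{\infty}w(u;\alpha^{*})\,du\bigr)\to0$; a Gr\"onwall/renewal argument then gives $D(t)\to0$. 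Coupling $N_{g}^{0}$ (the $H_{0}$ marked process started at $0$, whose time-marginal is $N$) with $N_{g}^{\infty}$ through $\overline{N}$, the expected mass on which they disagree over $[0,t)$ is controlled by $\int_{0}^{t}D(s)\,ds$, whence
\begin{align*}
\E|U(t;\theta^{*})-U^{\infty}(t;\theta^{*})|
&\le \vartheta^{*}\,\E\!\int_{[0,t)\times\mathbb{X}}w(t-s;\alpha^{*})|G_{c}(\x)|\,|N_{g}^{0}-N_{g}^{\infty}|(ds\times d\x)\\
&\quad+\vartheta^{*}\,\E\!\int_{(-\infty,0)\times\mathbb{X}}w(t-s;\alpha^{*})|G_{c}(\x)|\,N_{g}^{\infty}(ds\times d\x);
\end{align*}
both terms vanish as $t\to\infty$ --- the second by stationarity of $\lambda^{\infty}$, $\E_{\phi^{*}}[|G_{c}(\X)|^{4}]<\infty$ (Condition \ref{Cond: Marks Stat Ergodic}) and $\int_{t}^{\infty}w(u;\alpha^{*})\,du\to0$, and the first by the same ingredients plus the independence of $\y$ from the point processes from (i), which decouples the marks from the discrepancy mass. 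I expect the ergodicity step --- cleanly presenting $N^{\infty}$ as a shift-equivariant factor of the Poisson input and applying the fact that a product of a mixing flow with an ergodic flow is ergodic --- together with the bookkeeping of $b(t)$ and of the coupled $U$-difference, to be the parts needing the most care; the rest is routine given Proposition \ref{propExistence} and \cite{bremaud1996}.
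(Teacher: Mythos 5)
Your proposal is correct and follows the same overall skeleton as the paper's proof: construct the stationary version, get independence of $N^{\infty}$ from $\y$, deduce ergodicity of $(\lambda^{\infty},U^{\infty}(\cdot;\theta^{*}))$ as a factor of a jointly ergodic pair, and prove \eqref{Eqn: deviation L1} via the renewal inequality $f\le r+\vartheta^{*}w\ast f$ followed by the split of the $U$-difference into the $[0,t)$ discrepancy term and the $(-\infty,0)$ tail term. The differences are in how two sub-steps are justified, and in both cases your route is the more self-contained one. For existence and independence, the paper cites Proposition 4.4(i) of \cite{ClinetYoshida2017}, whereas you re-run the thinning recursion \eqref{systemHawkes} with $g\equiv 1$ and observe that the iterates are $\sigma(\overline{N})$-measurable; both are fine. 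For ergodicity, the paper asserts that independence of the ergodic processes $N^{\infty}$ and $\y$ makes the pair $(N^{\infty},\y)$ jointly ergodic and then invokes \citep[Lemma 10.5]{kallenberg2006foundations}; as literally stated, independence plus ergodicity of each factor does not give ergodicity of the product (two independent copies of the same irrational rotation is the standard counterexample) --- one factor must be weakly mixing. Your argument supplies exactly this missing ingredient by working upstream with $(\overline{N},\y)$, using that the homogeneous Poisson process is mixing under translations, that the product of a mixing flow with an ergodic flow is ergodic, and that $(\lambda^{\infty},U^{\infty})$ is a shift-equivariant factor. This is the cleaner justification of the ergodicity claim. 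Two minor remarks: in the tail term of the $U$-estimate only $\E_{\phi^{*}}|G_{c}(\X)|<\infty$ is needed, not the fourth moment; and your $b(t)$ should be identified explicitly as $r(t)=\E[\lambda^{\infty}(0)]\int_{t}^{\infty}w(u;\alpha^{*})\,du$ (plus the effect of $C_{0}$), after which the iteration $f\le R\ast r$ with $R=\sum_{k}\vartheta^{*k}w^{\ast k}$ and dominated convergence closes the argument exactly as in the paper.
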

\begin{proof} The existence of $N^\infty$ along with properties (i) and (ii) are direct consequences of the independence of $N$ and $\y$, along with Proposition 4.4 (i) from \cite{ClinetYoshida2017}. Next, since $\y$ is ergodic by assumption, the process of jumps $N^\infty$ is stationary ergodic by assumption, and since both processes are independent from each other, the joint process $(N^\infty,\y)$ is stationary ergodic as well. Since for any $t \in \mathbb{R}$, $(\lambda^\infty(t), U^\infty(t,\theta^*))$ admits a stationary representation and given the form of $(\lambda^\infty(t), U^\infty(t,\theta^*))_{t \in \mathbb{R}}$, we can deduce that they are also ergodic by Lemma 10.5 from \cite{kallenberg2006foundations}. Finally, we show \eqref{Eqn: deviation L1}. We first deal with the convergence of $f(t):=\E | \lambda(t,\theta^*) -  \lambda^{\infty}(t) |$ to $0$. Defining $r(t) = \E\int_{(-\infty,0]} w(t-s;\alpha^*)\lambda^\infty(s) ds$, and following the same reasoning as for the proof of Proposition 4.4 (iii) in \cite{ClinetYoshida2017}, some algebraic manipulations easily lead to the inequality
	$$ f(t) \leq r(t) + \vartheta^* w(.;\alpha^*) \ast f(t), \textnormal{ } t \geq 0$$
	where for two functions $a$ and $b$, and $t \in \mathbb{R}_+$, $a \ast b (t) = \int_0^t a(t-s)b(s)ds$ whenever the integral is well-defined. Iterating the above equation, we get for any $n \in \mathbb{N}$
	$$ f(t) \leq \sum_{k=0}^n \vartheta^{*k} w(.;\alpha^*)^{\ast k} \ast r(t) + \vartheta^{*n+1} w(.;\alpha^*)^{\ast(n+1)}\ast f(t).$$
	Using the fact that $\int w(.;\alpha^*) = 1$, $\vartheta^* < 1$ and using Young's convolution inequality we easily deduce that the second term tends to 0 as $n$ tends to infinity, so that $f$ is dominated by $R \ast r$ where $ R := \sum_{k=0}^{+\infty} \vartheta^{*k} w(.;\alpha^*)^{\ast k}$. Note that $R$ is finite and integrable since $\int_0^{+\infty} R(s)ds  \leq 1/(1-\vartheta)$. We first prove that $r(t) \to 0$. To do so, note that $r(t) = \E\left[\lambda^\infty(0)\right] \int_t^{+\infty} w(u;\alpha^*)du \to 0$ since $w(.;\alpha^*)$ is integrable. Now, since $R \ast r(t) = \int_0^t R(s)r(t-s)ds$, and $R(s)r(t-s)$ is dominated by $\textrm{sup}_{u \in \mathbb{R}_+} r(u) R(s)$ which is integrable, we conclude by the dominated convergence theorem that $f(t) \leq R \ast r(t) \to 0$. Finally, we prove that $g(t) := \E | U(t;\theta^*) - U^\infty(t;\theta^*)| \to 0$. Let $N_g^0(ds \times d\x)$ denote the point process for  $s \in \R$ and under the null hypothesis. We have
	\begin{align*}
	g(t) &\leq  \E \left| \int_{(0,t) \times \mathbb{X}}w(t-s;\alpha^*) G_c(\x) (N_g^0 - N_g^\infty)(ds \times d\x)\right|\\&\quad \quad + \E\left|\int_{(-\infty,0) \times \mathbb{X}} w(t-s;\alpha^*)G_c(\x)N_g^\infty(ds \times d\x)\right|\\
	&\leq\E \int_{(0,t) \times \mathbb{X}}w(t-s;\alpha^*) |G_c(\x)||N_g^0 - N_g^\infty|(ds \times d\x) \\&\quad \quad+ \E \int_{(-\infty,0) \times \mathbb{X}} w(t-s;\alpha^*)|G_c(\x)|N_g^\infty(ds \times d\x)\\
	&\leq \E|G_c(\x)| \int_{(0,t)} w(t-s;\alpha^*) f(s)ds\\&\quad \quad + \E|G_c(\x)|\E\left[\lambda^\infty(0)\right] \underbrace{\int_{t}^{+\infty} w(u;\alpha^*)du}_{\to 0}.
	\end{align*}
	Since $\int_{(0,t)} w(t-s;\alpha^*) f(s)ds = \int_{(0,t)} w(s;\alpha^*) f(t-s)ds$ and $f(t)\to 0$, we have, again, by application of the dominated convergence theorem that $g(t) \to 0$. 
\end{proof}
\begin{lem} \label{Lem 3} $S_u^T$ defined in \eqref{Eqn: SuT process} satisfies
	\begin{equation*}
	(S_u^T)_{u \in [0,1]} \to^d \Omega^{1/2} (W_u)_{u \in [0,1] }
	\end{equation*}
	where $W$ is standard Brownian motion (and convergence is in the Skorokhod space $\mathbf{D}([0,1])$) and $\Omega$ is a positive definite matrix.
\end{lem}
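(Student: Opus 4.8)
The plan is to recognise $(S^T_u)_{u\in[0,1]}$ as a rescaled, time-changed purely discontinuous square-integrable martingale and to invoke a martingale functional central limit theorem, following the strategy of \cite{ClinetYoshida2017}. Fix $c\neq 0$, abbreviate $U(t)=U(t;\theta^*)$ as in \eqref{Eqn: U(t) definition} (with $\phi$ fixed at $\phi^*$), and put $H(t)=\lambda(t;\theta^*)^{-1}U(t)$. Then $H$ is predictable (left-continuous in $t$) and, since $\lambda(t;\theta^*)\ge\eta^*>0$ and by the uniform $L^p$ bounds of Conditions \ref{Cond: Conditions on Quasi MLE under null hypothesis}[A2] and \ref{Cond: Condition on sup lambda_g}, $\sup_t\|H(t)\|_4<\infty$; hence $M^T_t:=\tfrac{1}{\sqrt T}\int_{[0,t]}H(s)\,\tilde N(ds)$ is a genuine (not merely local) square-integrable $(\calf_t)$-martingale on $[0,T]$, and $S^T_u=M^T_{uT}$. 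I will prove the scalar statement $(S^T_u)\to^d(\sigma_c W_u)$ with $\sigma_c^2=c^\T\Omega c$; the $r$-dimensional conclusion $(S^T_u)\to^d\Omega^{1/2}(W_u)$ then follows by running the same argument jointly on all components of $\tfrac1{\sqrt T}\int_{[0,\cdot]}\lambda(s;\theta^*)^{-1}\p_\psi\lambda_g(s;\nu^*)\tilde N(ds)$.

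The heart of the proof is the convergence of the predictable quadratic variation. Since $\tilde N$ has compensator $\lambda(s;\theta^*)\,ds$,
\[
\langle S^T\rangle_u=\frac1T\int_{[0,uT]}H(s)^2\lambda(s;\theta^*)\,ds=\frac1T\int_{[0,uT]}\lambda(s;\theta^*)^{-1}U(s)^2\,ds ,
\]
and I would show $\sup_{u\in[0,1]}\bigl|\langle S^T\rangle_u-u\,\sigma_c^2\bigr|\to^{\mathbb{P}}0$ where $\sigma_c^2=\E[\lambda^\infty(0)^{-1}U^\infty(0;\theta^*)^2]$, with $\lambda^\infty,U^\infty$ the stationary versions of Lemma \ref{Lem U ergodicity}. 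This splits into three parts: (a) replacing the non-stationary pair $(\lambda(\cdot;\theta^*),U(\cdot))$ inside the time average by $(\lambda^\infty,U^\infty(\cdot;\theta^*))$, controlling the error by the $L^1$-convergence \eqref{Eqn: deviation L1} of Lemma \ref{Lem U ergodicity} combined with Cauchy--Schwarz against the uniform fourth-moment bounds and a Ces\`aro step; (b) Birkhoff's ergodic theorem for the stationary ergodic process $(\lambda^\infty,U^\infty(\cdot;\theta^*))$ (its ergodicity is furnished by Lemma \ref{Lem U ergodicity}), which is legitimate because $1/\lambda^\infty\le1/\eta^*$ and $\E[U^\infty(0;\theta^*)^2]<\infty$ render the integrand integrable, giving $\tfrac1T\int_0^{T}\lambda^\infty(s)^{-1}U^\infty(s;\theta^*)^2\,ds\to\sigma_c^2$; and (c) upgrading the pointwise-in-$u$ limit to a uniform one via monotonicity of $u\mapsto\langle S^T\rangle_u$ and continuity of $u\mapsto u\,\sigma_c^2$ (a Dini-type argument). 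The vector version of this step also establishes the existence of $\Omega=\mathbb{P}\text{-}\lim_T T^{-1}\I_\psi(\nu^*)$ asserted in the text.

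Next I would verify the conditional Lindeberg (jump) condition: for every $\varepsilon>0$, $\E\bigl[\sum_{s\le T}|\Delta M^T_s|^2\mathbf{1}_{\{|\Delta M^T_s|>\varepsilon\}}\bigr]\to0$ with $\Delta M^T_s=\tfrac1{\sqrt T}\lambda(s;\theta^*)^{-1}U(s)\,\Delta N(s)$. Bounding $\mathbf{1}_{\{|\Delta M^T_s|>\varepsilon\}}\le\varepsilon^{-2}|\Delta M^T_s|^2$ reduces this to $\varepsilon^{-2}T^{-2}\,\E\!\int_0^T\lambda(s;\theta^*)^{-3}U(s)^4\lambda(s;\theta^*)\,ds=O(T^{-1})\to0$, which follows again from Conditions \ref{Cond: Conditions on Quasi MLE under null hypothesis}[A2] and \ref{Cond: Condition on sup lambda_g} (this is where $p\ge4$ is used). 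The quadratic-variation limit together with this Lindeberg condition yields, by the martingale functional CLT (e.g.\ Rebolledo's theorem, or \citep[Ch.~VIII]{jacod2013limit}), $(S^T_u)_{u\in[0,1]}\to^d(\sigma_c W_u)_{u\in[0,1]}$ in $\mathbf{D}([0,1])$, hence $(S^T_u)\to^d\Omega^{1/2}(W_u)$. It remains to show $\Omega$ is positive definite, i.e.\ $\sigma_c^2>0$ for all $c\neq0$: on the positive-probability event that $N^\infty$ has a point at some $s<0$, $U^\infty(0;\theta^*)$ contains the term $\vartheta^* w(-s;\alpha^*)c^\T G(\x;\phi^*)$, and since $N^\infty$ is independent of $\y$ while $\cov_{\phi^*}(H(\X))=\Omega_G(\phi^*)$ is positive definite by part (v) of Condition \ref{Cond: h function}, $c^\T G(\X;\phi^*)$ — and hence $U^\infty(0;\theta^*)$ — is non-degenerate, so $\sigma_c^2=\E[\lambda^\infty(0)^{-1}U^\infty(0;\theta^*)^2]>0$.

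I expect the main obstacle to be part (a) of the quadratic-variation step: transferring the time average from $(\lambda(\cdot;\theta^*),U(\cdot))$ to the stationary pair for the \emph{quadratic} functional $\lambda^{-1}U^2$, uniformly in $u$, when \eqref{Eqn: deviation L1} only controls the \emph{linear} discrepancies $\E|\lambda(t;\theta^*)-\lambda^\infty(t)|$ and $\E|U(t;\theta^*)-U^\infty(t;\theta^*)|$. The resolution will be to expand the difference of quadratics as $\lambda^{-1}(U-U^\infty)(U+U^\infty)+\bigl(\lambda^{-1}-(\lambda^\infty)^{-1}\bigr)(U^\infty)^2$ and bound each term by Cauchy--Schwarz (or H\"older) against the uniform fourth-moment bounds of Conditions \ref{Cond: Condition on sup lambda_g} and \ref{Cond: Marks Stat Ergodic} — which is precisely why the moment order $p=(\dim(\Theta)+1)\vee4$ appears in the hypotheses.
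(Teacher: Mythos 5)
Your proposal is correct and follows essentially the same route as the paper: convergence of the predictable bracket via the stationary approximation $(\lambda^\infty, U^\infty)$ of Lemma \ref{Lem U ergodicity} (the paper handles the quadratic discrepancy by the same domination plus uniform integrability from the fourth-moment bounds, which is what your Cauchy--Schwarz/interpolation step amounts to), the identical Lindeberg bound reducing to $T^{-1}\sup_t\E|U(t)|^4$, and the functional CLT of \citep[Theorem VIII.3.24]{jacod2013limit}. Your additional sketch of why $\Omega$ is positive definite is a point the paper's proof leaves implicit, but it does not change the argument.
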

\begin{proof}
	Similarly to \citep[proof of Lemma 3.13]{ClinetYoshida2017} we first show that
	\begin{equation*}
	\langle S^T,S^T \rangle_u  =\frac{1}{T} \int_{[0,uT]}  \lambda(t;\theta^{*})^{-1}U(t;\theta^*, \phi^*)^2 dt
	\end{equation*}
	converges in probability to $u c^\T\Omega c$. Introducing $\lambda^\infty$, $U^\infty$ as in Lemma \ref{Lem U ergodicity}, we need to show that 
	\begin{equation} \label{eqn: bracketErgodicity}
	\frac{1}{T} \int_{[0,uT]}  \left\{\lambda(t;\theta^{*})^{-1}U(t;\theta^*, \phi^*)^2 - \lambda^\infty(t)^{-1}U^\infty(t;\theta^*)^2 \right\}dt \to^P 0.
	\end{equation}
	Using the boundedness of $\lambda(t;\theta^*)^{-1}$ and $\lambda^{\infty}(t)^{-1}$, we have the domination 
	\begin{align*}
	A_t := &\left|\frac{U(t;\theta^*, \phi^*)^2}{\lambda(t;\theta^{*})} - \frac{U^\infty(t;\theta^*)^2}{\lambda^\infty(t)}\right|\\& \leq K\left|U(t;\theta^*, \phi^*)^2 - U^\infty(t;\theta^*)^2\right| + KU^\infty(t;\theta^*)^2\left|\lambda(t;\theta^*)- \lambda^{\infty}(t)\right|,
	\end{align*}
	for some constant $K>0$. By Lemma \ref{Lem U ergodicity}, we thus have $A_t \to^P 0$. Moreover, since by Condition 3, $U(t;\theta^*, \phi^*)$ and $U^\infty(t;\theta^*)$ are $\mathbb{L}^{2+\epsilon}$ bounded for some $\epsilon >0$, and $\lambda(t;\theta^*)$ and $\lambda^{\infty}(t)$ are $\mathbb{L}^p$ bounded for any $p>1$, we deduce that $\E |A_t| \to 0$. This, in turn, easily implies that $\E|T^{-1} \int_0^{uT} A_t dt| \to 0$, and thus we get \eqref{eqn: bracketErgodicity}. By the ergodicity property of Lemma \ref{Lem U ergodicity}, we also have 
	$$\frac{1}{T} \int_{[0,uT]}    \lambda^\infty(t)^{-1}U^\infty(t;\theta^*)^2  dt \to^P \E \left[\lambda^\infty(0)^{-1}U^\infty(0;\theta^*)^2 \right] = uc^\T \Omega c,$$
	where $\Omega = \E[\lambda^\infty(0)^{-1} \partial_\psi \lambda^\infty(0)\partial_\psi \lambda^\infty(0)^\T ]$, which proves our claim.

	Next, for Lindeberg's condition, for any $a>0$, similarly to \cite{ClinetYoshida2017}
	\begin{eqnarray*}
		\E[\sum_{s \le u}(\Delta S_s^T)^2\mathbf{1}_{\{\Delta S_s^T>a\}}]  &\le& \frac{1}{a^2}\E[\sum_{s \le u}(\Delta S_s^T)^{4}]\\
		&=& \frac{1}{a^2}\E[\int_{[0,uT]}\lvert \frac{1}{\sqrt{T}}\lambda(t;\theta^{*})^{-1} U(t;\theta^*, \phi^*)\rvert^{4} N(dt)]\\
		&=& \frac{1}{a^2T^2}\E[\int_{[0,uT]} \lambda(t;\theta^{*})^{-3} \lvert U(t;\theta^*, \phi^*)\rvert^{4}dt]\\
		&\leq& \frac{uK}{T}   \sup_{t \in \mathbb{R}_+} \E  \lvert U(t;\theta^*, \phi^*)\rvert^{4} \to0,\\
	\end{eqnarray*}
	where we have used Condition 3 along with the boundedness of $\lambda(t;\theta^{*})^{-1}$. As in \cite{ClinetYoshida2017}, application of \citep[3.24 chapter VIII]{jacod2013limit} gives the required functional CLT.

\end{proof}
\begin{lem} \label{Lem 4}														
	\begin{equation*}
	\frac{1}{\sqrt{T}}(\p_\psi l_g(\hat \nu_T)-\p_\psi l_g(\nu^{*})) \to^P 0.
	\end{equation*}
\end{lem}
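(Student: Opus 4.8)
Fix a nonzero $c\in\R^r$; since proving the scalar statement for every such $c$ gives the vector convergence, the plan is to show $T^{-1/2}\big(c^\T\p_\psi l_g(\hat\nu_T)-c^\T\p_\psi l_g(\nu^{*})\big)\to^P 0$. Recall from \eqref{Eqn: Score wrt general psi under H0} and \eqref{Eqn: U(t) definition} that $c^\T\p_\psi l_g(\theta,\phi,0)=\int_{[0,T]}\lambda(t;\theta)^{-1}U(t;\theta,\phi)N(dt)-\int_{[0,T]}U(t;\theta,\phi)dt$ with $U(t;\theta,\phi)=\vartheta\int_{[0,t)\times\mathbb{X}}w(t-s;\alpha)c^\T G(\x;\phi)N_g(ds\times d\x)$. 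Since $\hat\nu_T=(\hat\theta_T,\hat\phi_T,0)$ and $\nu^{*}=(\theta^{*},\phi^{*},0)$ differ only in their first two blocks, I would split the increment as $A_T+B_T$, where $A_T:=c^\T\p_\psi l_g(\hat\theta_T,\hat\phi_T,0)-c^\T\p_\psi l_g(\hat\theta_T,\phi^{*},0)$ is the effect of estimating the mark parameter and $B_T:=c^\T\p_\psi l_g(\hat\theta_T,\phi^{*},0)-c^\T\p_\psi l_g(\theta^{*},\phi^{*},0)$ the effect of estimating the intensity parameter, and prove $T^{-1/2}A_T\to^P 0$ and $T^{-1/2}B_T\to^P 0$ separately.

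For $A_T$ the key point is that $U$ is affine in $\hat\mu_H$: since $G(\x;\phi)=H(\x)-\mu_H(\phi)$ with $H$ not depending on $\phi$, and $\vartheta\int_{[0,t)\times\mathbb{X}}w(t-s;\alpha)N_g(ds\times d\x)=\lambda(t;\theta)-\eta$, one has exactly $U(t;\theta,\hat\phi_T)-U(t;\theta,\phi^{*})=c^\T(\mu_H-\hat\mu_H)\big(\lambda(t;\theta)-\eta\big)$, whence $A_T=c^\T(\mu_H-\hat\mu_H)\,M_T(\hat\theta_T)$ with $M_T(\theta):=\int_{[0,T]}\lambda(t;\theta)^{-1}(\lambda(t;\theta)-\eta)\big(N(dt)-\lambda(t;\theta)dt\big)$. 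At $\theta=\theta^{*}$ this is $\int_{[0,T]}\lambda(t;\theta^{*})^{-1}(\lambda(t;\theta^{*})-\eta^{*})\tilde N(dt)$ (with $\tilde N$ as in \eqref{Eqn: Score wrt psi under H0}), a martingale, hence $O_P(\sqrt T)$ --- precisely the vanishing of the $(\phi,\psi)$ block of the information matrix. Uniformly over $\theta$ in a neighbourhood of $\theta^{*}$, writing $N(dt)=\tilde N(dt)+\lambda(t;\theta^{*})dt$ splits $M_T(\theta)$ into a compensated stochastic integral that is $O_P(\sqrt T)$ uniformly (by the Burkholder--Davis--Gundy/maximal inequality estimates of \cite{ClinetYoshida2017} fed by Condition \ref{Cond: Conditions on Quasi MLE under null hypothesis} A2) and a Lebesgue term bounded by $O_P(T)\,|\theta-\theta^{*}|$; evaluating at $\hat\theta_T$, where $|\hat\theta_T-\theta^{*}|=O_P(T^{-1/2})$ by \citep[Theorem 3.11]{ClinetYoshida2017}, gives $T^{-1/2}M_T(\hat\theta_T)=O_P(1)$. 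Since $\hat\mu_H-\mu_H=o_P(1)$ by Condition \ref{Cond: Marks Stat Ergodic}, $T^{-1/2}A_T=o_P(1)$.

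For $B_T$ I would fix $\phi=\phi^{*}$ (abbreviating $U(t;\theta):=U(t;\theta,\phi^{*})$) and Taylor expand in $\theta$: $T^{-1/2}B_T=\big(T^{-1}\p_\theta(c^\T\p_\psi l_g)(\bar\theta_T,\phi^{*},0)\big)\sqrt T(\hat\theta_T-\theta^{*})$ for some $\bar\theta_T$ between $\hat\theta_T$ and $\theta^{*}$. With $\sqrt T(\hat\theta_T-\theta^{*})=O_P(1)$ and $\bar\theta_T\to^P\theta^{*}$ from \citep[Theorems 3.9 and 3.11]{ClinetYoshida2017}, it remains to show $T^{-1}\p_\theta(c^\T\p_\psi l_g)(\bar\theta_T,\phi^{*},0)\to^P 0$. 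Writing this derivative as $\int_{[0,T]}\p_\theta(\lambda(t;\theta)^{-1}U(t;\theta))N(dt)-\int_{[0,T]}\p_\theta U(t;\theta)dt$ and splitting $N(dt)=\tilde N(dt)+\lambda(t;\theta^{*})dt$: the compensated part, divided by $T$, is $T^{-1/2}$ times a stochastic integral that is $O_P(1)$ uniformly over $\theta$ near $\theta^{*}$ by the same maximal inequalities, now fed by Conditions \ref{Cond: Conditions on Quasi MLE under null hypothesis} A2 and \ref{Cond: Condition on sup lambda_g} (plus a Sobolev embedding, whence the requirement $p\ge\dim\Theta+1$), so it is $o_P(1)$; the remaining Lebesgue term $T^{-1}\int_{[0,T]}\big(\p_\theta(\lambda^{-1}U)(t;\theta)\lambda(t;\theta^{*})-\p_\theta U(t;\theta)\big)dt$ collapses algebraically at $\theta=\theta^{*}$ to $-T^{-1}\int_{[0,T]}\lambda(t;\theta^{*})^{-1}\p_\theta\lambda(t;\theta^{*})U(t;\theta^{*})dt$, and, by the $\mathbb{L}^1$ domination just used together with continuity in $\theta$, $T^{-1}\p_\theta(c^\T\p_\psi l_g)(\bar\theta_T,\phi^{*},0)$ has the same probability limit as its value at $\theta^{*}$, which by the ergodicity of Lemma \ref{Lem U ergodicity} (arguing as for \eqref{eqn: bracketErgodicity}) equals $-\E\big[\lambda^{\infty}(0)^{-1}\p_\theta\lambda^{\infty}(0)U^{\infty}(0;\theta^{*})\big]$. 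This quantity is $0$: conditioning on the jump times of $N^{\infty}$ and using $N^{\infty}\perp\y$ (Lemma \ref{Lem U ergodicity}(i)), stationarity of $\y$, and $\E_{\phi^{*}}[c^\T G(\X;\phi^{*})]=0$ (a consequence of Condition \ref{Cond: h function}), one gets $\E[U^{\infty}(0;\theta^{*})\mid N^{\infty}]=0$, hence the expectation vanishes --- the vanishing of the $(\theta,\psi)$ block of the information matrix. Therefore $T^{-1/2}B_T=o_P(1)$, and together with the previous paragraph this completes the proof.

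The main obstacle I anticipate is not the algebra but the uniform, correctly-scaled control of the compensated stochastic integrals appearing in both $A_T$ and $B_T$: extracting $\sup$-over-parameters bounds of order $\sqrt T$ from Conditions \ref{Cond: Conditions on Quasi MLE under null hypothesis} A2 and \ref{Cond: Condition on sup lambda_g} through Burkholder--Davis--Gundy estimates and a Sobolev embedding, essentially reproducing the estimates of \cite{ClinetYoshida2017} but now for integrands that also carry the mark functional $G(\x;\phi)$ --- this is precisely why Condition \ref{Cond: Condition on sup lambda_g} and the choice $p=(\dim\Theta+1)\vee 4$ were imposed. Conceptually, what makes the proof work are the two cancellations identified above: the $(\theta,\psi)$ cross-information block vanishes by independence of $N^{\infty}$ and $\y$ together with $\E_{\phi^{*}}[G(\X;\phi^{*})]=0$, and the $(\phi,\psi)$ cross-score is a genuine $\tilde N$-martingale because $U$ is affine in $\mu_H(\phi)$ with $\p_\phi$-derivative a deterministic multiple of a functional of the point process alone; these are the quantitative forms of the block-diagonality of the information matrix established in \cite{RichDunsmuirPeters}.
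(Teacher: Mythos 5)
Your proposal is correct and follows essentially the same route as the paper: the same split into the $\hat\phi_T$-effect and the $\hat\theta_T$-effect, the same observation that the first is $c^\T(\hat\mu_H-\mu_H)$ times an $O_P(\sqrt T)$ martingale-type quantity (the $\vartheta$-score of the unmarked likelihood), and the same Taylor expansion of the second into the three terms mirroring $\p_\theta^2 l_T$, controlled by Sobolev/Burkholder--Davis--Gundy bounds under Conditions \ref{Cond: Conditions on Quasi MLE under null hypothesis} A2 and \ref{Cond: Condition on sup lambda_g}, with the ergodic limit vanishing because $\E_{\phi^*}[G(\X)]=0$ and $N^\infty\perp\y$. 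Your write-up is, if anything, slightly more explicit than the paper about why the $(\theta,\psi)$ cross-information block is zero.
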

\begin{proof}
	Rewrite
	\begin{multline}\label{Eqn: Score Est - Score True}
	\frac{1}{\sqrt{T}} c^\T(\p_\psi l_g(\hat \nu_T)-\p_\psi l_g(\nu^{*})) \\
	=\frac{1}{\sqrt{T}} c^\T(\p_\psi l_g(\hat \theta_T, \hat \phi_T,0)-\p_\psi l_g(\hat\theta_T,\phi^*,0))
	+\frac{1}{\sqrt{T}} c^\T (\p_\psi l_g(\hat \theta_T, \phi^*,0)-\p_\psi l_g(\theta^*, \phi^*,0))
	\end{multline}
	Consider the first term in \eqref{Eqn: Score Est - Score True}. Recall that $\hat G_c(\X)-G_c(\X) = \hat \mu_H-\mu_H$, we have  
	\begin{align*}
	&U(t;\hat \theta_T, \hat \phi_T)-U(t;\hat \theta_T, \phi^*)\\
	&=c^\T\p_\psi \lambda_g(t;\hat \theta_T, \hat \phi_T,0)-c^\T\p_\psi \lambda_g(t;\hat\theta_T, \phi^*,0) \\
	&= \hat \vartheta_T \int_{[0,t)\times \mathbb{X}}w(t-s;\hat \alpha_T)[\hat G_c(\x)-G_c(\x)]N_g^0(ds \times d\x) \\
	& = \hat \vartheta_T \int_{[0,t)}w(t-s;\hat \alpha_T)N(ds) c^\T(\hat \mu_H-\mu_H)
	\end{align*}
	giving
	\begin{align*}
	&\frac{1}{\sqrt{T}} c^\T(\p_\psi l_g(\hat \theta_T, \hat \phi_T)-\p_\psi l_g(\hat\theta_T,\phi^*))\\
	&= 	\frac{1}{\sqrt{T}} \hat \vartheta_T \int_{[0,T]}\lambda(t;\hat\theta_T)^{-1}\int_{[0,t)}  w(t-s;\hat\alpha_T) N(ds) \{N(dt)- \lambda(t;\hat \theta_T) dt\} c^\T(\hat \mu_H-\mu_H)
	\end{align*}	
	Now by Condition \ref{Cond: Marks Stat Ergodic}, $\hat \mu_H -\mu_H\to^P 0$. Also, using the consistency of the quasi likelihood estimates for the unmarked process, $\hat \vartheta \to^P \vartheta^*$. Finally
	\[
	\frac{1}{\sqrt{T}}\int_{[0,T]}\lambda(t;\hat\theta_T)^{-1}\int_{[0,t)} w(t-s;\hat\alpha_T)N(ds)  \{N(dt)- \lambda(t;\hat \theta_T) dt\}
	\]
	is precisely the same as the derivative of the nonboosted likelihood w.r.t. the branching ratio parameter $\vartheta$ and it converges in distribution to a normal random variable directly from \citep[Proof of Theorem 3.11]{ClinetYoshida2017}. Hence the first term in \eqref{Eqn: Score Est - Score True} converges to zero in probability.
	
	Consider the second term in \eqref{Eqn: Score Est - Score True} which is written as
	\begin{align*}
	&\frac{1}{\sqrt{T}} c^\T [\p_\psi l_g(\hat \theta_T, \phi^*,0)-\p_\psi l_g(\theta^*, \phi^*,0)]\\
	&=\frac{1}{\sqrt{T}}\left\lbrace \int_{[0,T]}\lambda(t;\hat \theta_T)^{-1} U(t;\hat \theta_T)N(dt)-\int_{[0,T]}U(t;\hat \theta_T)dt\right\rbrace\\&\quad-\frac{1}{\sqrt{T}}\left\lbrace\int_{[0,T]}\lambda(t; \theta^*)^{-1} U(t; \theta^*)N(dt)-\int_{[0,T]}U(t;\theta^*)dt\right\rbrace\\
	&= \frac{1}{T}\left\lbrace \int_{[0,T]}\p_\theta(\lambda(t;\bar \theta_T)^{-1} U(t;\bar\theta_T))N(dt)-\int_{[0,T]}\p_\theta U(t;\bar \theta_T)dt\right\rbrace \sqrt{T}(\hat \theta_T - \theta^*)\\
	\end{align*}
	using a first order Taylor series expansion where $\bar \theta_T \in [\theta^*, \hat \theta_T]$. By the central limit theorem in \cite{ClinetYoshida2017} $\sqrt{T}(\hat \theta_T - \theta^*)$ is asymptotically normal. We show that the term multiplying this converges to zero in probability using a similar argument as to that in \citep[Proof of Lemma 3.12]{ClinetYoshida2017}.  Now, at any $\theta$ we have
	\begin{align*}
	&\frac{1}{T}\left\lbrace \int_{[0,T]}\p_\theta\{\lambda(t; \theta)^{-1} U(t;\theta))\}N(dt)-\int_{[0,T]}\p_\theta U(t; \theta)dt\right\rbrace \nonumber\\
	& = \frac{1}{T}\int_{[0,T]}\p_{\theta}\{\lambda(t; \theta)^{-1} U(t;\theta)\} \tilde N(dt)\\
	&-\frac{1}{T}\int_{[0,T]}\lambda(t,\theta)^{-2}\p_\theta\lambda(t;\theta) U(t;\theta)\lambda(t;\theta^*)dt\\
	&-\frac{1}{T}\int_{[0,T]}\p_\theta U(t;\theta)\lambda(t;\theta)^{-1}[\lambda(t;\theta)-\lambda(t;\theta^*)]dt
	\end{align*}
	These three terms are analogous to the three terms appearing in the expression for $\p_\theta^2 l_T(\theta)$ in \citep[middle p. 1809]{ClinetYoshida2017} and are listed in the same order.
	
	The third term converges in probability to zero uniformly on a ball, $V_T$ centred on $\theta^*$ shrinking to $\{\theta^*\}$ using similar arguments to those in \citep[p. 1810]{ClinetYoshida2017} for their third term and Lemma \ref{Lem U ergodicity}.
	
	The second term also converges to a limit uniformly on a ball, $V_T$ centred on $\theta^*$ shrinking to $\{\theta^*\}$ and uses ergodicity from Lemma \ref{Lem U ergodicity} and similar arguments to \cite{ClinetYoshida2017} but note that the limit is a matrix of zeros because its expectation is zero corresponding to the block diagonal structure of the full information matrix.
	
	Finally consider the first, martingale term,
	\begin{equation*}
	M_T(\theta)= \frac{1}{T}\int_{[0,T]}\p_{\theta}\{\lambda(t; \theta)^{-1} U(t;\theta)\} \tilde N(dt)
	\end{equation*}
	which we will show converges to zero in probability uniformly in $\theta \in \Theta$ (uniformity allow us to deal with the evaluation at $\bar \theta_T$) and use $\E[\left| M_{a,T}(\bar\theta_T)\right|^p] \le \E[\sup_{\theta \in \Theta }\left|M_{a,T}(\theta)\right|^p]$ where $M_{a,T}$ is the $a$'th component. 
	For $p = dim(\Theta)+1$
	\begin{equation*}
	\E[\sup_{\theta \in \Theta }\left|M_{a,T}(\theta)\right|^p]
	\le K(\Theta,p) \left\lbrace \int_{\Theta}d\theta \E[|M_{T}(\theta)|^p] + \int_{\Theta}d\theta \E[|\p_\theta M_{T}(\theta)|^p]\right\rbrace 
	\end{equation*} 
	where $K(\Theta,p) <\infty$ using Sobolev's inequality as in \citep[Proof of Lemma 3.10]{ClinetYoshida2017}. We next apply the Davis-Burkholder-Gundy inequality followed by Jensen's inequality to each of $\E[|M_T(\theta)|^p]$ and $\E[|\p_\theta M_T(\theta)|^p]$.
	
	First
	\begin{align*}
	\E[|M_T(\theta)|^p]&\le C T^{-p}\E\left[ \int_{[0,T]}(\p_{\theta}\{\lambda(t; \theta)^{-1} U(t;\theta)\})^2  \lambda(t;\theta^*)dt\right]^\frac{p}{2}\\
	& \le CT^{-p+\frac{p}{2}-1}   \int_{[0,T]}\E\left[|\p_{\theta}\{\lambda(t; \theta)^{-1} U(t;\theta)\}|^p  \lambda(t;\theta^*)^\frac{p}{2}\right]dt\\
	& \le C T^{-\frac{p}{2}} \sup_{t \in \Rp}\E\left[ \sup_{\theta \in \Theta} |\p_{\theta}\{\lambda(t; \theta)^{-1} U(t;\theta)\}|
	^p  \lambda(t;\theta^*)^\frac{p}{2}  \right]
	\end{align*}
	Similarly
	\begin{align*}
	\E[|\p_{\theta} M_T(\theta)|^p]&\le C T^{-p}\E\left[ \int_{[0,T]}(\p_{\theta}^2\{\lambda(t; \theta)^{-1} U(t;\theta)\})^2  \lambda(t;\theta^*)dt\right]^\frac{p}{2}\\
	& \le C T^{-\frac{p}{2}} \sup_{t \in \Rp}\E\left[ \sup_{\theta \in \Theta} |\p_{\theta}^2\{\lambda(t; \theta)^{-1} U(t;\theta)\}|
	^p  \lambda(t;\theta^*)^\frac{p}{2}  \right]
	\end{align*}
	Now, as in \cite{ClinetYoshida2017} proof of Lemma 3.12, the processes $ |\p_{\theta}\{\lambda(t; \theta)^{-1} U(t;\theta)\}|
	^p  \lambda(t;\theta^*)^\frac{p}{2} $ and $|\p_{\theta}^2\{\lambda(t; \theta)^{-1} U(t;\theta)\}|
	^p  \lambda(t;\theta^*)^\frac{p}{2}$
	are dominated by polynomials in $\lambda(t;\theta)^{-1}$, $\p_\theta^i \lambda(t;\theta)$ and $\p_\theta^i U(t;\theta)$ for $i\in{0,1,2}$. The first two terms are covered by \cite{ClinetYoshida2017} condition A2 and shown to be true by them for the exponential decay Hawkes process. The terms $\p_\theta^i U(t;\theta)$ are covered by Condition \ref{Cond: Condition on sup lambda_g} which is shown to be true for the exponential decay model in Lemma \ref{Lem 1}.

\end{proof}
\begin{lem} \label{Lem 5}
	The estimated information matrix, $\hat \I_{\psi}$  defined in \eqref{Eqn: Estimated Ipsipsi under H0, ver 3} satisfies
	\begin{equation*}\label{Eqn: Asympotitic Inf Mat Gamma}
	\frac{1}{T}\hat \I_{\psi}\to^P \Omega.
	\end{equation*}
\end{lem}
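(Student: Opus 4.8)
The plan is to prove the scalar statement $\tfrac1T\,c^\T\hat\I_\psi c\to^P c^\T\Omega c$ for every fixed $c\in\R^r$ and to recover the matrix version by polarization. With the notation of the proof of Lemma \ref{Lem 4}, $c^\T\hat\I_\psi c=\int_{[0,T]}\lambda(t;\hat\theta_T)^{-2}\hat U(t;\hat\theta_T)^2\,N(dt)$, where $\hat U(t;\theta)=\hat\vartheta_T\int_{[0,t)\times\mathbb{X}}w(t-s;\hat\alpha_T)\hat G_c(\x)N_g^0(ds\times d\x)$. I would compare this with its ``true parameter'' analogue $\int_{[0,T]}\lambda(t;\theta^*)^{-2}U(t;\theta^*)^2\,N(dt)$ by splitting, exactly as in \eqref{Eqn: Score Est - Score True}, into a step that replaces $\hat\phi_T$ (equivalently $\hat\mu_H$) by $\phi^*$ and a step that replaces $\hat\theta_T$ by $\theta^*$ with $\phi^*$ held fixed, and then identify the limit at $\nu^*$.

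For the $\hat\phi$-step I would use the identity from the proof of Lemma \ref{Lem 4}, namely $\hat U(t;\hat\theta_T)-U(t;\hat\theta_T,\phi^*)=\hat\vartheta_T\,c^\T(\hat\mu_H-\mu_H)\,\p_\vartheta\lambda(t;\hat\theta_T)$ (using $\p_\vartheta\lambda(t;\theta)=\int_{[0,t)}w(t-s;\alpha)N(ds)$), expand the square, and note that every term other than $U(t;\hat\theta_T,\phi^*)^2$ carries a factor $c^\T(\hat\mu_H-\mu_H)$ or its square, which is $o_P(1)$ by Condition \ref{Cond: Marks Stat Ergodic}. The accompanying time averages, e.g. $\tfrac1T\int_{[0,T]}\lambda(t;\hat\theta_T)^{-2}|U(t;\hat\theta_T,\phi^*)|\,|\p_\vartheta\lambda(t;\hat\theta_T)|\,N(dt)$ and $\tfrac1T\int_{[0,T]}\lambda(t;\hat\theta_T)^{-2}\p_\vartheta\lambda(t;\hat\theta_T)^2\,N(dt)$, are $O_P(1)$: bound the integrand uniformly over $\theta\in\Theta$ (using that $\lambda(t;\theta)^{-1}$ is uniformly bounded), pass to the compensator, and apply Hölder together with the uniform bounds $\sup_t\|\sup_\theta|U(t;\theta)|\|_4<\infty$, $\sup_t\|\sup_\theta|\p_\vartheta\lambda(t;\theta)|\|_4<\infty$ and $\sup_t\|\lambda(t;\theta^*)\|_2<\infty$ from Conditions \ref{Cond: Condition on sup lambda_g} and \ref{Cond: Conditions on Quasi MLE under null hypothesis}[A2]. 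For the $\hat\theta$-step I would Taylor-expand $\theta\mapsto\lambda(t;\theta)^{-2}U(t;\theta,\phi^*)^2$ about $\theta^*$, so that the difference equals $\big(\tfrac1T\int_{[0,T]}\p_\theta\{\lambda(t;\bar\theta_T)^{-2}U(t;\bar\theta_T)^2\}\,N(dt)\big)(\hat\theta_T-\theta^*)$ for some $\bar\theta_T\in[\theta^*,\hat\theta_T]$; since $\hat\theta_T-\theta^*=o_P(1)$ it suffices that $\tfrac1T\int_{[0,T]}\sup_{\theta\in\Theta}|\p_\theta\{\lambda(t;\theta)^{-2}U(t;\theta)^2\}|\,N(dt)=O_P(1)$, which holds because that derivative is a polynomial in $\lambda(t;\theta)^{-1}$, $\p_\theta^i\lambda(t;\theta)$ and $\p_\theta^iU(t;\theta)$ for $i\le1$, all with $T$-uniform $L^4$ moments, so its expectation (after compensation) is bounded uniformly in $T$.

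It then remains to show $\tfrac1T\int_{[0,T]}\lambda(t;\theta^*)^{-2}U(t;\theta^*)^2\,N(dt)\to^P c^\T\Omega c$. I would write $N(dt)=\tilde N(dt)+\lambda(t;\theta^*)\,dt$. The compensator part $\tfrac1T\int_{[0,T]}\lambda(t;\theta^*)^{-1}U(t;\theta^*)^2\,dt$ is precisely the bracket $\langle S^T,S^T\rangle_1$ treated in the proof of Lemma \ref{Lem 3}, which (introducing $\lambda^\infty,U^\infty$ from Lemma \ref{Lem U ergodicity} and invoking ergodicity together with the $L^1$ convergence \eqref{Eqn: deviation L1}) was shown there to converge in probability to $\E[\lambda^\infty(0)^{-1}U^\infty(0;\theta^*)^2]=c^\T\Omega c$. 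The martingale remainder $\tfrac1T\int_{[0,T]}\lambda(t;\theta^*)^{-2}U(t;\theta^*)^2\,\tilde N(dt)$ has predictable quadratic variation $\tfrac1{T^2}\int_{[0,T]}\lambda(t;\theta^*)^{-3}U(t;\theta^*)^4\,dt$, whose expectation is at most $\tfrac{K}{T}\sup_{t\in\Rp}\E|U(t;\theta^*)|^4\to0$ by Condition \ref{Cond: Condition on sup lambda_g}, so the remainder is $o_P(1)$. Combining the three steps gives $\tfrac1T c^\T\hat\I_\psi c\to^P c^\T\Omega c$, hence the lemma.

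The hard part is less any individual estimate than keeping every ``replace $\hat\nu_T$ by $\nu^*$'' manipulation uniform: one may only use that $\hat\theta_T\in\Theta$ (or lies in a ball shrinking to $\theta^*$) and that $\hat\mu_H\to^P\mu_H$, so each comparison must be dominated by a $T$-uniform bound on a supremum over $\theta$, matched against Conditions \ref{Cond: Conditions on Quasi MLE under null hypothesis}[A2] and \ref{Cond: Condition on sup lambda_g} at the exponent $p=4$ --- precisely the slack furnished by Lemma \ref{Lem 1}. The one genuinely new ingredient relative to Lemma \ref{Lem 3} is the martingale remainder created by integrating against the empirical measure $N(dt)$ in \eqref{Eqn: Estimated Ipsipsi under H0, ver 3} instead of its compensator, which is where the fourth moment of $G(\X)$ --- rather than the $2+\epsilon$ used elsewhere --- is required.
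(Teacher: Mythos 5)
Your proposal is correct and follows essentially the same route as the paper: the same split of $\hat\I_\psi-\hat\I_\psi(\nu^*)$ into a $\hat\phi_T\to\phi^*$ step (factoring out $c^\T(\hat\mu_H-\mu_H)=o_P(1)$ against $O_P(1)$ time averages) and a $\hat\theta_T\to\theta^*$ step, followed by writing $\hat\I_\psi(\nu^*)$ as its compensator plus an $o_P(1)$ martingale and invoking the ergodic limit $c^\T\Omega c$ from Lemma \ref{Lem 3}. You are in fact somewhat more explicit than the paper on the $\hat\theta_T$-step (the Taylor expansion with a $T$-uniform bound on the $\theta$-derivative), which the paper handles only by reference to \cite{ClinetYoshida2017}.
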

\begin{proof}
Recall from \eqref{Eqn: Estimated Ipsipsi under H0, ver 3}
\[
\hat \I_{\psi} = \int_{[0,T]}  \lambda(t;
\hat\theta_T)^{-2}(\p_\psi\lambda_g(t;\hat\nu_T))^{\otimes 2} N(dt)
\]
and let
\[	
\hat \I_{\psi}(\nu^*) =\int_{[0,T]}  \lambda(t;\theta^{*})^{-2}(\p_\psi\lambda_g(t;\nu^{*}))^{\otimes 2} N(dt).	
\]
Note that, by similar arguments to that of the proof of Lemma 3.12 in \cite{ClinetYoshida2017}, we have 
\[
T^{-1}\hat \I_{\psi}(\nu^*) = T^{-1}\int_{[0,T]}  \lambda(t;\theta^{*})^{-1}(\p_\psi\lambda_g(t;\nu^{*}))^{\otimes 2}dt + M_T,
\]
where $M_T$ is a martingale of order $O_P(T^{-1/2})$. by ergodicity, we thus have that $T^{-1}\hat \I_{\psi}(\nu^*)\to \Omega$ where $\Omega$ is the same positive definite matrix as in Lemma \ref{Lem 3}. Hence to prove Lemma \ref{Lem 5} it is sufficient to show that $\frac{1}{T}c^\T\{\hat \I_{\psi}-\I_{\psi}(\nu^*)\}c\to 0$ for any $c \in \mathbb{R}^r$. 
Let $R(t;\theta,\phi)  = \lambda(t;\theta)^{-1}U(t;\theta,\phi)$. Then
\begin{align*}
 \frac{1}{T}c^\T\{\hat \I_{\psi}-\hat \I_{\psi}(\nu^*)\}c &=\frac{1}{T} \int_{[0,T]} \{R(t;\hat \theta_T, \hat \phi_T)^2 - R(t;\theta^*, \phi^*)^2\}N(dt)\\
 &=\frac{1}{T} \int_{[0,T]} \{R(t;\hat \theta_T, \hat \phi_T)^2 - R(t;\hat \theta_T, \phi^*)^2\}N(dt)\\
 & \quad + \frac{1}{T} \int_{[0,T]} \{R(t;\hat \theta_T, \phi^*)^2 - R(t; \theta^*, \phi^*)^2\}N(dt)
\end{align*}
Now, using a Taylor series expansion
\begin{align*}
&\frac{1}{T} \int_{[0,T]} \{R(t;\hat \theta_T, \hat \phi_T)^2 - R(t;\hat \theta_T, \phi^*)^2\}N(dt)\\
&\quad =2c^\T(\hat \mu_H - \mu_H)\frac{1}{T} \int_{[0,T]}\{\lambda(t;\hat \theta_T)^{-1}\int_{[0,t)}\hat\vartheta_T w(t-s;\hat \alpha_T)N(ds)\} R(t;\hat \theta_T, \phi^*)N(dt) \\
&\quad \quad+\{c^\T(\hat \mu_H - \mu_H)\}^2\frac{1}{T} \int_{[0,T]}\{\lambda(t;\hat \theta_T)^{-1}\int_{[0,t)}\hat\vartheta_T w(t-s;\hat \alpha_T)N(ds)\}^2 N(dt).
\end{align*}
Now $c^\T(\hat \mu_H - \mu_H)\to^P 0$ and, similarly to \cite{ClinetYoshida2017}, both integrals are uniformly bounded in probability for all $T$ hence $\frac{1}{T}c^\T\{\hat \I_{\psi}-\I_{\psi}(\nu^*)\}c$ converges to zero in probability, completing the proof.
\end{proof}
\section{Proof of Theorem \ref{thm LocalPower}}\label{Sec: Proof Thm 2}
We have divided the proof of Theorem \ref{thm LocalPower} into a series of Lemmas. Before we derive the asymptotic distribution of the score statistic we need some definitions. For the sake of simplicity, we will use the notation $\lambda^{T}(.;\theta) := \lambda_g^{T}(.;\theta,\phi,0)$ (which is independent of $\phi \in \Phi$). By Proposition \ref{propExistence}, we may assume the existence of an unmarked Hawkes process $N^{(0)}$ generated by the same measure $\bar{N}_g$ on $\mathbb{R}^2 \times \mathbb{X}$ as the sequence of processes $N_g^T$. $N^{(0)}$ is thus a marked Hawkes process with mark function $g(., \phi^*, 0) = 1$. We call $\lambda^{(0)}$ its associated stochastic intensity, that is for any $\theta \in \Theta$ 
\begin{eqnarray*}
	\lambda^{(0)}(t;\theta) &=& \eta + \vartheta \int_{(-\infty,t) \times \mathbb{X} } w(t-s;\alpha) g(\x; \phi^*,0)N^{(0)}(ds \times d\x)\\
	&=& \eta + \vartheta \int_{(-\infty,t) } w(t-s;\alpha) N^{(0)}(ds \times \mathbb{X})\\
\end{eqnarray*}
where $\lambda^{(0)}(t;\theta^*)$ is the actual stochastic intensity of $N^{(0)}$, that is $\int_0^t \lambda^{(0)}(s;\theta^*) ds$ is the predictable compensator of $N^{(0)}([0,t] \times \mathbb{X})$. Finally, we define for $i \in \{0,1\}$, $\theta \in \Theta$, $\phi \in \Phi$,
$$\lambda^{(0),i}(t;\theta, \phi) = \vartheta \int_{[0,t)\times\mathbb{X}} w(t-s;\alpha) \partial_\psi^i g(\x;\phi,0)N^{(0)}(ds \times d\x).$$

We first show that in the sense of (\ref{deviation local alternative 1}) and (\ref{deviation local alternative 3}) below, $N_g^T$ converges to $N^{(0)}$ when $T \to +\infty$.

\begin{lem} \label{lem deviation alternatives} Let $f$ be a predictable process depending on $\theta \in \Theta$ such that $$\sup_{t \in [0,T]} \E \sup_{\theta \in \Theta}|f(t,\theta)|^p < +\infty$$ for some $p \geq 2$. Then we have
	\begin{eqnarray} \label{deviation local alternative 1}
	\E \sup_{\theta \in \theta} \left|\int_{[0,T) \times \mathbb{X}} f(t,\theta) \{N_g^T - N^{(0)}\}(dt \times d\x)\right| = O\left(T^{1/2}\right)
	\end{eqnarray} 
	and for any $i \in \{0,1\}$
	\begin{eqnarray}\label{deviation local alternative 3}
	\sup_{t \in \mathbb{R}_+} \E \sup_{\theta \in \theta} | \partial_\psi^i \lambda_g^{T}(t;\theta,\phi^*,0) - \lambda^{(0),i}(t;\theta,\phi^*) |^2 = O\left(T^{-1/2}\right).
	\end{eqnarray}
\end{lem}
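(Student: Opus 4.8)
The plan is to exploit that $N_g^T$ and $N^{(0)}$ are both obtained by thinning the \emph{same} Poisson measure $\bar N_g$, differing only in the acceptance threshold (the intensity), so their difference is controlled by the difference of intensities, which in turn is driven by the difference $g(\x;\phi^*,\psi_T^*)-g(\x;\phi^*,0)=O(\psi_T^*)=O(T^{-1/2})$. First I would introduce $\delta_T(t) := \lambda_g^T(t;\theta^*,\phi^*,\psi_T^*)-\lambda^{(0)}(t;\theta^*)$, the gap between the two true stochastic intensities, and set $e_T(t):=\E|\delta_T(t)|$. Arguing exactly as in the proof of Proposition \ref{propExistence} (taking expectations through the thinning representation and using $\int_0^\infty w(s;\alpha^*)ds=1$, $\vartheta^*<1$), one gets a renewal-type inequality $e_T(t)\le \vartheta^* w(\cdot;\alpha^*)\ast e_T(t) + \vartheta^* C_T \int_{-\infty}^t w(t-s;\alpha^*)\E[\lambda^{(0)}(s;\theta^*)]ds$, where $C_T:=\sup_{\x}|g(\x;\phi^*,\psi_T^*)-1|$ controlled by (\ref{momentLocalPower}) in Condition \ref{conditionLocalPower} and a Taylor expansion of $g$ in $\psi$ around $0$, giving $C_T=O(T^{-1/2})$ (more carefully, since $g$ is not uniformly bounded one works with $\E|G_c|$-type moments rather than a sup; the $p$-th moment bounds of Condition \ref{conditionLocalPower} make this rigorous). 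Iterating the convolution inequality as in the proof of Lemma \ref{Lem U ergodicity} and using Young's inequality with the resolvent kernel $R=\sum_k \vartheta^{*k}w(\cdot;\alpha^*)^{\ast k}$, which is integrable with $\int R \le 1/(1-\vartheta^*)$, yields $\sup_{t}e_T(t)=O(T^{-1/2})$.

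Next I would upgrade this $\mathbb L^1$ bound on $\delta_T$ to the $\mathbb L^2$ bound needed in (\ref{deviation local alternative 3}). The point is that $\partial_\psi^i\lambda_g^T(t;\theta,\phi^*,0)-\lambda^{(0),i}(t;\theta,\phi^*)$ is a stochastic convolution of $w(t-\cdot;\alpha)$ (uniformly dominated by $\bar w$ via (\ref{uniform w localpower})) against $\partial_\psi^i g(\x;\phi^*,0)$ integrated against the \emph{difference measure} $N_g^T-N^{(0)}$. Splitting $N_g^T-N^{(0)}$ into its compensated part plus the difference of compensators $\int (\lambda_g^T(s;\ldots)F_s^T(d\x)-\lambda^{(0)}(s;\ldots)F_s(d\x))ds$, applying the Davis--Burkholder--Gundy inequality to the martingale part and Jensen (with the probability measure built from $w$) to the drift part — precisely the two-term decomposition already used throughout Appendix \ref{Sec: Proof Lemma 1} — reduces everything to bounding $\sup_t\E|\delta_T(t)|^2$ and moments of $\partial_\psi^i g$. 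Here I would need a second-moment analogue of the renewal argument above, i.e. run the same fixed-point/convolution estimate for $\E|\delta_T(t)|^2$ using the $p\ge 2$ moment assumptions, obtaining $\sup_t\E|\delta_T(t)|^2=O(T^{-1})$; combined with the $O(T^{-1/2})$ contribution coming directly from $C_T$ multiplying bounded moments, this gives the claimed $O(T^{-1/2})$ rate for the squared $\mathbb L^2$ deviation, uniformly over $\theta\in\Theta$ (the uniformity handled by the Sobolev/Sobolev-ball argument of \citep[Proof of Lemma 3.10]{ClinetYoshida2017}, using that the moment bounds in Condition \ref{conditionLocalPower} already control $\partial_\theta^i$ of everything in sight).

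For (\ref{deviation local alternative 1}), with the integrand $f(t,\theta)$ predictable and $\mathbb L^p$-bounded uniformly in $t,\theta$, I would again write $\int_{[0,T)\times\mathbb X} f\,d(N_g^T-N^{(0)})$ as (compensated martingale) $+$ (difference of compensators). The martingale term has predictable quadratic variation bounded by $\int_0^T f(t,\theta)^2(\lambda_g^T(t;\theta^*,\phi^*,\psi_T^*)+\lambda^{(0)}(t;\theta^*))dt = O_P(T)$, so by DBG it is $O_P(T^{1/2})$ in $\mathbb L^1$; the drift term is $\int_0^T f(t,\theta)\,\delta_T(t)\,dt$ up to mark-distribution corrections, and is bounded in expectation by $T\cdot\sup_t\|f(t,\cdot)\|_p\cdot\sup_t\E|\delta_T(t)| = T\cdot O(1)\cdot O(T^{-1/2}) = O(T^{1/2})$; uniformity in $\theta$ follows once more from the Sobolev inequality over the compact $\Theta$ together with the corresponding bounds for $\partial_\theta f$ (supplied by the hypothesis on $f$ applied componentwise, or, in the applications of this lemma, by Condition \ref{conditionLocalPower}). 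The main obstacle I anticipate is the non-boundedness of the boost function $g$ and of the marks: one cannot simply pull $\sup_\x|g(\x;\phi^*,\psi_T^*)-1|$ out of the integrals, so the renewal/convolution estimates must be carried out at the level of $p$-th moments of $G_c(\x)$ and of $\partial_\psi g$, carefully tracking that the $O(T^{-1/2})$ smallness of $\psi_T^*$ survives after taking those moments — this is exactly where the uniform moment bounds and the uniform decay-function domination (\ref{uniform w localpower}) of Condition \ref{conditionLocalPower} are indispensable, and it is the step requiring the most care.
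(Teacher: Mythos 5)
Your overall strategy coincides with the paper's: couple $N_g^T$ and $N^{(0)}$ through the common driving measure $\bar{N}_g$, establish a renewal (fixed-point) estimate $\sup_{t\le T}\E|\delta_T(t)|=O(T^{-1/2})$ for the intensity gap $\delta_T(t)=\lambda_g^{T}(t;\theta^*,\phi^*,\psi_T^*)-\lambda^{(0)}(t;\theta^*)$, upgrade to a second-moment estimate, and deduce \eqref{deviation local alternative 1} from the fact that the total-variation measure $|N_g^T-N^{(0)}|$ has compensator $|\delta_T(t)|\,dt$. Your first step is essentially the paper's Step 1 (the paper takes a supremum over $[0,T]$ and solves the resulting inequality rather than iterating the resolvent kernel, which is cosmetic), and your insistence on working with moments of $\partial_\psi g$ rather than a sup over $\x$ is exactly right.

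The genuine gap is your claim that $\sup_t\E|\delta_T(t)|^2=O(T^{-1})$. No second-moment renewal argument can deliver this, and the rate is in fact wrong: the martingale part of $\delta_T$ is a stochastic integral against the compensated difference $\tilde{N}_g^{T}-\tilde{N}^{(0)}$, and because $N_g^T-N^{(0)}$ is itself a signed point measure whose total-variation measure has intensity $|\delta_T(s)|$, the second moment of that martingale is of order $\int w(t-s)^2\,\E|\delta_T(s)|\,ds=O(T^{-1/2})$ --- a \emph{first}-moment quantity. This jump contribution caps the $L^2$ rate at $O(T^{-1/2})$; that is precisely what the paper's Step 2 proves, and it is the reason \eqref{deviation local alternative 3} is stated with rate $O(T^{-1/2})$ for the \emph{squared} deviation rather than $O(T^{-1})$. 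Your final rate for \eqref{deviation local alternative 3} happens to be correct, but you attribute it to the drift term $C_T$ rather than to this jump term, and the overclaim propagates into \eqref{deviation local alternative 1}: in the drift estimate $\int_0^T\E[|f(t,\theta)|\,|\delta_T(t)|]\,dt$ you cannot pair $\|f\|_p$ with $\E|\delta_T|$ (that would require $f$ bounded); H\"older forces you to pay an $L^{q}$ norm of $\delta_T$ with $q$ conjugate to $p$, and with the correct $\E|\delta_T|^2=O(T^{-1/2})$ this yields $O(T^{3/4})$ rather than the claimed $O(T^{1/2})$. (The paper's own one-line derivation of \eqref{deviation local alternative 1} via Cauchy--Schwarz is comparably loose, and only $o(T)$ is used downstream, so the applications of the lemma survive; but as a proof of the stated $O(T^{1/2})$ your argument, resting on the unobtainable $O(T^{-1})$, does not close.)
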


\begin{proof}
	We prove our claim in three steps.\\
	\textbf{Step 1.} Letting $\delta^T(t) = \E|\lambda_g^{T}(t;\theta^*,\phi^*,\psi_T^*) - \lambda^{(0)}(t;\theta^*)|$, we prove $\sup_{t \in [0,T]}\delta^T(t)  = O(T^{-1/2})$. We have 
	\begin{eqnarray*}
		\delta^T(t) &\leq&\vartheta\E  \int_{[0,t) \times \mathbb{X}} w(t-s; \alpha^*) g(\x;\phi^*,\psi_T^*) |N_g^T - N^{(0)}|(ds \times d\x)\\
		&+& \vartheta\E  \int_{[0,t) \times \mathbb{X}} w(t-s; \alpha^*) |g(\x;\phi^*,\psi_T^*) - 1| N^{(0)}(ds \times d\x)\\
		&\leq&  \vartheta \E  \int_{[0,t)  } w(t-s; \alpha^*)  \E[g(\y_s;\phi^*,\psi_T^*) | \calf_{s-}^{\y}] |\lambda_g^{T}(s;\theta^*,\phi^*,\psi_T^*) - \lambda^{(0)}(s;\theta^*)|  ds\\
		&+& T^{-1/2}|\gamma^*|\vartheta\E  \int_{[0,t) \times \mathbb{X}} w(t-s; \alpha^*) |\sup_{\psi \in [0, \psi_T^*]} \partial_\psi g(\x;\phi^*,\psi)| N^{(0)}(ds \times d\x)\\
		&\leq&  \vartheta \E  \int_{[0,t)  } w(t-s; \alpha^*)  \E[g(\y_s;\phi^*,\psi_T^*) | \calf_{s-}^{\y}] |\lambda_g^{T}(s;\theta^*,\phi^*,\psi_T^*) - \lambda^{(0)}(s;\theta^*)|  ds\\
		&+& T^{-1/2}K\\
		&\leq& C \vartheta  \int_{[0,t) } w(t-s; \alpha^*) \delta^T(s)ds + T^{-1/2}K\\
		&\leq & C \vartheta \sup_{s \in [0,T]} \delta^T(s) + T^{-1/2}K,
	\end{eqnarray*}   
	for some constant $K >0$, where we have used that $\E[g(\y_s;\phi^*,\psi_T^*) | \calf_{s-}^{\y}] \leq C < 1/\vartheta$, that $\int_0^{+\infty}w(.;\alpha^*) = 1$, and Condition \ref{conditionLocalPower}. Moreover, for a vector $x$, we have used the notation $|x| = \sum_i |x_i|$. Taking the supremum over $[0,T]$ on the left hand side, we deduce $\sup_{s \in [0,T]} \delta^T(s) \leq KT^{-1/2}/(1-C\vartheta)$ and we are done.\\
	\textbf{Step 2.} Letting $\epsilon^T(t) = \E|\lambda_g^{T}(t;\theta^*,\phi^*,\psi_T^*) - \lambda^{(0)}(t;\theta^*)|^2$, we prove $\sup_{t \in [0,T]}\epsilon^T(t)  = O(T^{-1/2})$. We have for some $c > 0$ arbitrary small,
	\begin{eqnarray*}
		\epsilon^T(t) &\leq& (1+c)\vartheta^2\E  \left|\int_{[0,t) \times \mathbb{X}} w(t-s; \alpha^*) g(\x;\phi^*,\psi_T^*) (N_g^T - N^{(0)})(ds \times d\x)\right|^2 \\
		&+& (1+c^{-1})\vartheta^2 \E \left| \int_{[0,t) \times \mathbb{X}} w(t-s; \alpha^*) |g(\x;\phi^*,\psi_T^*) - 1| N^{(0)}(ds \times d\x)\right|^2,\\
		&=& I + II, 
	\end{eqnarray*}
	where we have used the inequality $(x+y)^2 \leq (1+c)x^2+(1+c^{-1})y^2$ for any $c > 0$. First, we have 
	\begin{eqnarray*}
		I &\leq & (1+c)(1+c^{-1})\vartheta^2 \E  \left|\int_{[0,t) \times \mathbb{X}} w(t-s; \alpha^*) g(\x;\phi^*,\psi_T^*) (\tilde{N}_g^{T} - \tilde{N}^{(0)})(ds \times d\x)\right|^2 \\
		&+& (1+c)^2\vartheta^2 \E  \left|\int_{[0,t)} w(t-s; \alpha^*) \E[g(\y_s;\phi^*,\psi_T^*) | \calf_{s-}^\y] (\lambda_g^{T}(s;\theta^*,\phi^*,\psi_T^*) - \lambda^{(0)}(s;\theta^*)) ds \right|^2\\
		&=& I_A + I_B.
	\end{eqnarray*}
	Now, applying Jensen's inequality with respect to the probability measure \newline $w(s;\alpha^*)ds/\int_0^t w(s;\alpha^*)ds$, and then using $\int_0^{+\infty}w(.;\alpha^*) =1$, and \newline $\E[g(\y_{s};\phi^*,\psi_T^*) | \calf_{s-}^\y] \leq C < 1/\vartheta$ yields
	\begin{eqnarray*} 
		I_B &\leq& (1+c)^2\vartheta^2 C^2 \E \int_{[0,t)} w(t-s; \alpha^*)  \epsilon^T(s) ds \\
		& \leq & (1+c)^2 \vartheta^2 C^2 \sup_{s \in [0,T]} \epsilon^T(s).
	\end{eqnarray*}
	Now, for $I_A$, we have
	\begin{eqnarray*}
		I_A &\leq& K \E  \left|\int_{[0,t) \times \mathbb{X}} w(t-s; \alpha^*) (g(\x;\phi^*,\psi_T^*)-1) (\tilde{N}_g^{T} - \tilde{N}^{(0)})(ds \times d\x)\right|^2  \\
		&+& K \E  \left|\int_{[0,t) \times \mathbb{X}} w(t-s; \alpha^*) (\tilde{N}_g^{T} - \tilde{N}^{(0)})(ds \times d\x)\right|^2  \\
		&\leq& K\E \int_{[0,t)} w(t-s; \alpha^*)^2 \E[(g(\y_{s};\phi^*,\psi_T^*) -1)^2 | \calf_{s-}^\y]\\ &&\quad|\lambda_g^{T}(s;\theta^*,\phi^*,\psi_T^*)- \lambda^{(0)}(s;\theta^*) |ds\\
		& &\quad +K \E \int_{[0,t)} w(t-s; \alpha^*)^2 |\lambda_g^{T}(s;\theta^*,\phi^*,\psi_T^*) - \lambda^{(0)}(s;\theta^*) |ds\\
		&\leq&  \frac{K}{T} \int_{[0,t)} w(t-s; \alpha^*)^2 \E[\sup_{\psi \in [0,\psi_T^*]}\partial_\psi g(\y_{s};\phi^*,\psi)^4 ]^{1/2} \epsilon^T(s)^{1/2}ds \\&& \quad  +K \sup_{s \in [0,T]} \delta^T(s)\\
		& \leq & \frac{K}{T} \sup_{s \in [0,T]} \left(\epsilon^T(s) \vee 1\right) + K  \sup_{s \in [0,T]} \delta^T(s),
	\end{eqnarray*}
	where we have used Cauchy-Schwarz inequality along with (\ref{momentLocalPower}) and (\ref{uniform w localpower}). Moreover, following a similar path as for Step 1, we also have that $II \leq K T^{-1}$ by (\ref{momentLocalPower}). Thus, overall, using that $\sup_{s \in [0,T] }\delta^T(s) \leq KT^{-1/2}$ by Step 1, we obtain for some constant $K>0$
	$$ \epsilon^T(t) \leq K(T^{-1} + T^{-1/2}) + ((1+c)^2 \vartheta^2 C^2 + KT^{-1}) \sup_{s \in [0,T]} \epsilon^T(s),$$
	and taking the supremum over $[0,T]$ on the left hand side, taking $c>0$ close enough to $0$ and $T$ large enough so that $((1+c)^2 \vartheta^2 C^2 + KT^{-1}) < A$ for some constant $A <1$, we get 
	$$ \sup_{s \in [0,T]} \epsilon^T(s) \leq  K(T^{-1} + T^{-1/2})/(1-A) \leq \tilde{K} T^{-1/2}$$
	for some $\tilde{K} >0$.
	
	\textbf{Step 3.} We prove (\ref{deviation local alternative 1}) and  (\ref{deviation local alternative 3}). For  (\ref{deviation local alternative 1}), this is a direct consequence of the fact that the compensator of $|N_g^T - N^{(0)}|$ is $\int_0^T |   \lambda_g^{T}(t;\theta^*,\phi^*,\psi_T^*) - \lambda^{(0)}(t;\theta^*) |dt$, Cauchy-Schwarz inequality and the uniform condition on $f$. For (\ref{deviation local alternative 3}), let $i \in \{0,1\}$. We have 
	\begin{align*}
		&\E \sup_{\theta \in \Theta} |\partial_\psi^i \lambda_g^{T}(t;\theta,\phi^*,0) - \lambda^{(0),i}(t;\theta,\phi^*)|^2 \\&\leq K \E  \sup_{\alpha \in \mathcal{A}} \left| \int_{[0,t) \times \mathbb{X}} w(t-s; \alpha) \partial_\psi^i g(\x;\phi^*,0) (N_g^T - N^{(0)})(ds \times d\x) \right|^2 \\
		&\leq  K \E   \left| \int_{[0,t) \times \mathbb{X}} \bar{w}(t-s)|\partial_\psi^i g(\x;\phi^*,0)||N_g^T - N^{(0)}|(ds \times d\x) \right|^2\\
\end{align*}
	And from here, using Burkholder-Davis-Gundy inequality, Step 2 of this proof along with conditions (\ref{momentLocalPower}) and (\ref{uniform w localpower}) we deduce that the above term is dominated by $K T^{-1/2}$ for some $K >0$ uniformly in $t \in \mathbb{R}_+$.
\end{proof} 

\begin{lem} (Consistency of $\hat{\nu}_T$ under the local alternatives) \label{lem consistencyMLE local power}
	Under $H_1^T$, we have
	$$\hat{\nu_T} \to^P \nu^* := (\theta^*, \phi^*, 0).$$ 
\end{lem}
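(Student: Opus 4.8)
The plan is to reduce the statement to the consistency of $\hat\theta_T$: the third coordinate of $\hat\nu_T$ is identically $0$, and $\hat\phi_T\to^P\phi^*$ is assumed, so only $\hat\theta_T\to^P\theta^*$ has to be proved, and I would do this by adapting the proof of \citep[Theorem 3.9]{ClinetYoshida2017}. Concretely, it suffices to establish the uniform convergence
\[
\sup_{\theta\in\bar\Theta}\bigl|\mathbb{Y}_T^T(\theta)-\mathbb{Y}(\theta)\bigr|\to^P 0,\qquad \mathbb{Y}_T^T(\theta):=\tfrac1T\bigl(l_g^T(\theta)-l_g^T(\theta^*)\bigr),
\]
where $\mathbb{Y}$ is the ergodic limit of Condition \ref{Cond: Conditions on Quasi MLE under null hypothesis}[A4]: indeed $\mathbb{Y}\le 0$ (as in \citep{ClinetYoshida2017}) with $\mathbb{Y}(\theta^*)=0$, and by [A4] $\theta^*$ is its only zero on the relatively compact set $\bar\Theta$, while $\mathbb{Y}_T^T(\hat\theta_T)\ge\mathbb{Y}_T^T(\theta^*)=0$ because $\hat\theta_T$ maximises $l_g^T$; the standard argmax argument over a compact set then forces $\hat\theta_T\to^P\theta^*$, hence $\hat\nu_T\to^P\nu^*$.

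The key step will be to compare $\mathbb{Y}_T^T$ with the analogous contrast $\mathbb{Y}_T^{(0)}(\theta)=T^{-1}\bigl(l_g^{(0)}(\theta)-l_g^{(0)}(\theta^*)\bigr)$ built from the pure Hawkes process $N^{(0)}$ of Lemma \ref{lem deviation alternatives} and its intensity $\lambda^{(0)}(\cdot;\theta)$; for $N^{(0)}$, Condition \ref{Cond: Conditions on Quasi MLE under null hypothesis} already yields $\sup_\theta|\mathbb{Y}_T^{(0)}(\theta)-\mathbb{Y}(\theta)|\to^P 0$ through \citep[Lemma 3.10]{ClinetYoshida2017}, so it remains to prove $\sup_\theta|\mathbb{Y}_T^T(\theta)-\mathbb{Y}_T^{(0)}(\theta)|\to^P 0$. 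Writing $\lambda^T(\cdot;\theta)=\lambda_g^T(\cdot;\theta,\phi^*,0)$ for the intensity entering $l_g^T$ and $N^T=N_g^T(\cdot,\mathbb{X})$, the quantity $T\bigl[\mathbb{Y}_T^T(\theta)-\mathbb{Y}_T^{(0)}(\theta)\bigr]$ splits as the sum of
\begin{align*}
\text{(a)}\quad & \int_{[0,T]}\Bigl[\ln\tfrac{\lambda^T(t;\theta)}{\lambda^T(t;\theta^*)}-\ln\tfrac{\lambda^{(0)}(t;\theta)}{\lambda^{(0)}(t;\theta^*)}\Bigr]N^T(dt),\\
\text{(b)}\quad & \int_{[0,T]}\ln\tfrac{\lambda^{(0)}(t;\theta)}{\lambda^{(0)}(t;\theta^*)}\,(N^T-N^{(0)})(dt),\\
\text{(c)}\quad & -\int_0^T\bigl[(\lambda^T-\lambda^{(0)})(t;\theta)-(\lambda^T-\lambda^{(0)})(t;\theta^*)\bigr]\,dt,
\end{align*}
and each term must be shown to be $o_P(T)$ uniformly in $\theta$.

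For (b), the process $f(t,\theta)=\ln\bigl(\lambda^{(0)}(t;\theta)/\lambda^{(0)}(t;\theta^*)\bigr)$ is predictable and, because $\lambda^{(0)}\ge\underline{\eta}>0$ and $\sup_\theta\lambda^{(0)}(t;\theta)$ has all moments bounded uniformly in $t$ by Condition \ref{Cond: Conditions on Quasi MLE under null hypothesis}[A2], satisfies $\sup_t\E\sup_\theta|f(t,\theta)|^p<\infty$ for every $p$; Lemma \ref{lem deviation alternatives}, equation \eqref{deviation local alternative 1}, then gives $\E\sup_\theta|\text{(b)}|=O(T^{1/2})$. For (c), Fubini together with the domination $w(\cdot;\alpha)\le\bar w$ and $\int_0^\infty\bar w<\infty$ from Condition \ref{conditionLocalPower} bound $\sup_\theta|\text{(c)}|$ by a constant times $|N^T-N^{(0)}|([0,T))$, whose expectation equals $\E\int_0^T|\lambda_g^T(t;\theta^*,\phi^*,\psi_T^*)-\lambda^{(0)}(t;\theta^*)|\,dt=O(T^{1/2})$ by Step 1 of the proof of Lemma \ref{lem deviation alternatives}; after dividing by $T$ both contribute $O_P(T^{-1/2})$. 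The delicate term is (a): using $\lambda^T,\lambda^{(0)}\ge\underline{\eta}$ one has $|\ln\lambda^T-\ln\lambda^{(0)}|\le\underline{\eta}^{-1}|\lambda^T-\lambda^{(0)}|$, so $T^{-1}\sup_\theta|\text{(a)}|$ is at most $\underline{\eta}^{-1}T^{-1}\int_{[0,T]}\bigl(\sup_\theta|\lambda^T(t;\theta)-\lambda^{(0)}(t;\theta)|+|\lambda^T(t;\theta^*)-\lambda^{(0)}(t;\theta^*)|\bigr)N^T(dt)$, and I would take expectations, replace $N^T(dt)$ by its compensator $\lambda_g^T(t;\theta^*,\phi^*,\psi_T^*)\,dt$, and apply Cauchy--Schwarz to obtain the bound $\underline{\eta}^{-1}T^{-1}\int_0^T\bigl(\E\sup_\theta|\lambda^T(t;\theta)-\lambda^{(0)}(t;\theta)|^2\bigr)^{1/2}\bigl(\E|\lambda_g^T(t;\theta^*,\phi^*,\psi_T^*)|^2\bigr)^{1/2}dt$; here the first factor is $O(T^{-1/4})$ uniformly in $t$ by \eqref{deviation local alternative 3} (case $i=0$) and the second is bounded uniformly in $t$ and $T$ by Condition \ref{conditionLocalPower} (valid since $\psi_T^*\in\mathcal{B}(0,\epsilon)$ for $T$ large), so $\E\bigl[T^{-1}\sup_\theta|\text{(a)}|\bigr]=O(T^{-1/4})\to0$ and Markov's inequality gives $T^{-1}\sup_\theta|\text{(a)}|\to^P0$.

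Combining the three bounds with the convergence for $N^{(0)}$ yields $\sup_\theta|\mathbb{Y}_T^T(\theta)-\mathbb{Y}(\theta)|\to^P0$, hence $\hat\theta_T\to^P\theta^*$ and $\hat\nu_T=(\hat\theta_T,\hat\phi_T,0)\to^P(\theta^*,\phi^*,0)=\nu^*$. I expect term (a) to be the main obstacle, because it is an integral against the random counting measure $N^T$ whose compensator is the \emph{boosted} intensity $\lambda_g^T(\cdot;\theta^*,\phi^*,\psi_T^*)$; controlling it forces one to combine, at the right rates, the $L^2$ smallness of the model-intensity discrepancy $\lambda^T-\lambda^{(0)}$ supplied by Lemma \ref{lem deviation alternatives} with the uniform $L^p$ moment control of the true intensity granted by Condition \ref{conditionLocalPower}.
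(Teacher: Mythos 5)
Your proposal is correct and follows essentially the same route as the paper: reduce to $\hat\theta_T\to^P\theta^*$, couple $N_g^T$ with the pure Hawkes process $N^{(0)}$, and show $T^{-1}\sup_\theta|l_g^T(\theta)-l_T^{(0)}(\theta)|\to^P 0$ using the bound $|\ln\lambda^T-\ln\lambda^{(0)}|\le\underline{\eta}^{-1}|\lambda^T-\lambda^{(0)}|$ together with \eqref{deviation local alternative 1} and \eqref{deviation local alternative 3}, before invoking the standard argmax/identifiability argument via [A4]. The only differences are cosmetic: you work with the contrasts $\mathbb{Y}_T^T-\mathbb{Y}_T^{(0)}$ and integrate the log-intensity discrepancy against $N^T$ (controlling it through the compensator $\lambda_g^T(\cdot;\nu_T^*)$ and Cauchy--Schwarz), whereas the paper integrates it against $N^{(0)}$ and puts the cross term on $\log\lambda^T\,(N^{(0)}-N_g^T)$ --- both yield the same $o_P(1)$ rates from the same lemmas.
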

\begin{proof}
	The convergence of the third component is obvious, and the convergence of the second one is assumed. All we have to show is that $\hat{\theta}_T \to^P \theta^*$. Let $l_T^{(0)}(\theta) = \int_{[0,T) \times \mathbb{X}} \log \lambda^0(t;\theta) N^{(0)}(dt \times d\x) - \int_0^T \lambda^{(0)}(t;\theta) dt$, where we recall that $\lambda^{(0)}$ is the stochastic intensity of $N^{(0)}$. We need to show that uniformly in $\theta \in \Theta$, we have the convergence $T^{-1} (l_T(\theta) - l_T^{(0)}(\theta)) \to^P 0$. But note that $T^{-1} (l_T(\theta) - l_T^{(0)}(\theta)) = I + II$ with 
	$$ I = -T^{-1} \int_{[0,T) \times \mathbb{X}} \left\{\log \lambda^{(0)}(t;\theta) N^{(0)}(dt \times d\x) - \log \lambda^{T}(t;\theta) N_g^T(dt \times d\x) \right\} $$
	and 
	$$ II =  -T^{-1}\int_0^T \{\lambda^{(0)}(t;\theta) - \lambda^{T}(t;\theta) \}dt. $$
	By (\ref{deviation local alternative 3}), we immediately have that $\E \sup_{\theta \in \Theta} |\{\lambda^{(0)}(t;\theta) - \lambda^{T}(t;\theta)| = O(T^{-1/2})$ uniformly in $t \in [0,T]$, so that $II \to^P 0 $ uniformly in $\theta \in \Theta$. Writing $I$ as the sum 
	\begin{eqnarray*}
		&& T^{-1} \int_{[0,T) \times \mathbb{X}} \left\{\log \lambda^{(0)}(t;\theta)  - \log \lambda^{T}(t;\theta) \right\}N^{(0)}(dt \times d\x) \\
		&+& T^{-1} \int_{[0,T) \times \mathbb{X}} \log \lambda^{T}(t;\theta) \left\{N^{(0)}(dt \times d\x) - N_g^T(dt \times d\x) \right\}\\
		&=&  A + B,
	\end{eqnarray*}
	we need to show that both terms tend to $0$. Since $|\log \lambda^{(0)}(t;\theta)  - \log \lambda^{T}(t;\theta)| \leq \underline{\eta}^{-1} | \lambda^{(0)}(t;\theta)  - \lambda^{T}(t;\theta)|$, we easily get by Cauchy-Schwarz inequality and  (\ref{deviation local alternative 3}) that $\E \sup_{\theta \in \Theta} |A| \to 0$. Moreover, using $\log \lambda^{T}(t;\theta) \leq \lambda^{T}(t;\theta) - 1$, by (\ref{deviation local alternative 1}) and Condition \ref{conditionLocalPower} we have that $\E \sup_{\theta \in \Theta} |B| \to 0$ and we are done. 
	
\end{proof}

\begin{lem} \label{lem local power}
	Under $H_1^T$, we have 
	$$T^{-1/2}\partial_\psi l_g(\hat{\nu}_T) \to^d \mathcal{N}(\Omega \gamma^*, \Omega)$$ 
\end{lem}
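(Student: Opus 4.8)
The plan is to reproduce, under $H_1^T$, the two--step argument used in the null case (Lemmas~\ref{Lem 3} and \ref{Lem 4}), working componentwise via the Cram\'er--Wold device as in Lemma~\ref{Lem 3}. The one genuinely new feature is a non--vanishing drift that appears because the true compensator of $N^T:=N_g^T(\cdot,\mathbb{X})$ is now $\lambda_g^T(t;\theta^*,\phi^*,\psi_T^*)\,dt$ rather than $\lambda^T(t;\theta^*)\,dt:=\lambda_g^T(t;\theta^*,\phi^*,0)\,dt$. \textbf{Step 1 (reduction to $\nu^*$).} First I would show $T^{-1/2}\{\partial_\psi l_g^T(\hat\nu_T)-\partial_\psi l_g^T(\nu^*)\}\to^P 0$, splitting the increment as in \eqref{Eqn: Score Est - Score True} into a $\phi$--part and a $\theta$--part. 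The $\phi$--part is, exactly as in Lemma~\ref{Lem 4}, proportional to $\hat\mu_H-\mu_H$ times the derivative of the unmarked quasi--likelihood of $N^T$ with respect to the branching coefficient; the latter is $O_P(1)$ because $N^T$ is $O(T^{1/2})$--close to the genuine Hawkes process $N^{(0)}$ by \eqref{deviation local alternative 1}, while $\hat\mu_H-\mu_H\to^P 0$ by consistency of $\hat\phi_T$ (Lemma~\ref{lem consistencyMLE local power}) and Condition~\ref{Cond: Marks Stat Ergodic}. The $\theta$--part is treated by a first--order Taylor expansion about $\theta^*$, writing it as $\{T^{-1}\partial_\theta\partial_\psi l_g^T(\bar\theta_T,\phi^*,0)\}\sqrt{T}(\hat\theta_T-\theta^*)$ with $\bar\theta_T$ between $\theta^*$ and $\hat\theta_T$; here $\sqrt{T}(\hat\theta_T-\theta^*)=O_P(1)$ is obtained by re--running the \cite{ClinetYoshida2017} quasi--MLE CLT argument and transferring from $N^T$ to $N^{(0)}$ through Lemmas~\ref{lem deviation alternatives} and~\ref{lem consistencyMLE local power}, while $T^{-1}\partial_\theta\partial_\psi l_g^T(\bar\theta_T,\phi^*,0)\to^P 0$ because it converges to the $\theta$--$\psi$ block of the limiting Fisher information, which vanishes since $\E_{\phi^*}[G(\X;\phi^*)]=0$ and the marks are independent of $N^{(0)}$ — exactly as for the corresponding term in Lemma~\ref{Lem 4}, the uniform--in--$\theta$ bounds coming from Condition~\ref{conditionLocalPower} and a Sobolev inequality.

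\textbf{Step 2 (martingale/drift split and the CLT).} Using the true compensator of $N^T$ I would write $T^{-1/2}\partial_\psi l_g^T(\nu^*)=T^{-1/2}M_T+T^{-1/2}D_T$ with
\[
M_T=\int_{[0,T]}\lambda^T(t;\theta^*)^{-1}\partial_\psi\lambda_g^T(t;\nu^*)\,\tilde N^T(dt),\qquad \tilde N^T(dt)=N^T(dt)-\lambda_g^T(t;\theta^*,\phi^*,\psi_T^*)\,dt,
\]
\[
D_T=\int_{[0,T]}\lambda^T(t;\theta^*)^{-1}\partial_\psi\lambda_g^T(t;\nu^*)\left[\lambda_g^T(t;\theta^*,\phi^*,\psi_T^*)-\lambda^T(t;\theta^*)\right]dt.
\]
For $T^{-1/2}M_T\to^d\mathcal{N}(0,\Omega)$ I would follow Lemma~\ref{Lem 3}: in $T^{-1}\langle M\rangle_T=T^{-1}\int_0^T\lambda^T(t;\theta^*)^{-2}(\partial_\psi\lambda_g^T(t;\nu^*))^{\otimes 2}\lambda_g^T(t;\theta^*,\phi^*,\psi_T^*)\,dt$, replacing the compensator $\lambda_g^T(t;\theta^*,\phi^*,\psi_T^*)$ by $\lambda^T(t;\theta^*)$ costs $O_P(T^{-1/4})$ by Step~2 of Lemma~\ref{lem deviation alternatives} and Condition~\ref{conditionLocalPower}; then replacing $(\partial_\psi\lambda_g^T,\lambda^T)$ by $(\lambda^{(0),1},\lambda^{(0)})$ via \eqref{deviation local alternative 3} and invoking ergodicity of the stationary process $N^{(0)}$ gives $T^{-1}\langle M\rangle_T\to^P\Omega$; Lindeberg's condition holds as in Lemma~\ref{Lem 3} from the fourth--moment bound in Condition~\ref{conditionLocalPower}, and \citep[VIII.3.24]{jacod2013limit} delivers the functional CLT.

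\textbf{Step 3 (identification of the drift and conclusion).} Taylor--expanding $g(\x;\phi^*,\psi_T^*)=1+(\psi_T^*)^\T G(\x;\phi^*)+r_T(\x)$ with $\psi_T^*=\gamma^*/\sqrt{T}$ and $G(\x;\phi^*)=\partial_\psi g(\x;\phi^*,0)$, the remainder $r_T$ being negligible by \eqref{momentLocalPower} and dominated convergence, gives $\lambda_g^T(t;\theta^*,\phi^*,\psi_T^*)-\lambda^T(t;\theta^*)=T^{-1/2}(\partial_\psi\lambda_g^T(t;\nu^*))^\T\gamma^*+R_T(t)$ with $\sup_t\E|R_T(t)|^2=o(T^{-1})$, so that
\[
T^{-1/2}D_T=\left(\frac1T\int_0^T\lambda^T(t;\theta^*)^{-1}(\partial_\psi\lambda_g^T(t;\nu^*))^{\otimes 2}\,dt\right)\gamma^*+o_P(1)\to^P\Omega\gamma^*,
\]
the matrix limit being the same one already established in Step~2. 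Combining Steps~1--3 by Slutsky's theorem gives $T^{-1/2}\partial_\psi l_g^T(\hat\nu_T)\to^d\mathcal{N}(0,\Omega)+\Omega\gamma^*=\mathcal{N}(\Omega\gamma^*,\Omega)$.

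The delicate part is Step~3: one must simultaneously control the Taylor remainder in $\psi$ and the passage from the $T$--dependent, non--stationary $N_g^T$ to the stationary $N^{(0)}$ — precisely what Lemma~\ref{lem deviation alternatives}, especially the $i=1$ case of \eqref{deviation local alternative 3}, is designed for — and, crucially, verify that the limiting matrix is exactly the $\Omega$ attached to the null model, so that Theorem~\ref{thm LocalPower} acquires non--centrality parameter $\Omega^{1/2}\gamma^*$ and not that of some other positive definite matrix. A secondary obstacle is establishing $\sqrt{T}(\hat\theta_T-\theta^*)=O_P(1)$ under $H_1^T$ in Step~1, which forces one to re--derive the quasi--MLE asymptotics of \cite{ClinetYoshida2017} for the perturbed process $N^T$.
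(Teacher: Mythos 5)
Your proposal is correct and follows essentially the same route as the paper: the same reduction to $\nu^*$ via the argument of Lemma \ref{Lem 4} transferred through Lemmas \ref{lem deviation alternatives} and \ref{lem consistencyMLE local power}, the same martingale-plus-drift split with respect to the true compensator $\lambda_g^T(t;\theta^*,\phi^*,\psi_T^*)\,dt$ (your $M_T+D_T$ is exactly the paper's $I+II$), and the same identification of the drift as $\Omega\gamma^*$ — the only cosmetic difference being that you Taylor-expand $g$ in $\psi$ with an explicit remainder where the paper uses the mean-value form $\partial_\psi\lambda_g^T(t;\theta^*,\phi^*,\tilde\gamma_T)$. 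Your flagged obstacles (the $\sqrt{T}$-rate of $\hat\theta_T$ under $H_1^T$ and matching the limit to the null-model $\Omega$) are real and are handled in the paper exactly as you anticipate.
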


\begin{proof}
	First, note that by application of Lemma \ref{lem deviation alternatives}, Lemma \ref{lem consistencyMLE local power}, and following the same path as for the proof of Lemma \ref{Lem 4}, we deduce 
	$$T^{-1/2}\partial_\psi l_g(\hat{\nu}_T) - T^{-1/2}\partial_\psi l_g(\nu^*) \to^P 0.$$
	Next, We have 
	\begin{eqnarray*}
		T^{-1/2}\partial_\psi l_g(\theta^*,\phi^*,0) &=& T^{-1/2}\int_{(0,T)} \p_\psi \log \lambda_g^{T}(t;\theta^*,\phi^*,0) \tilde{N}_g^{T}(dt) \\&+& T^{-1/2} \int_{(0,T)} \partial_\psi \lambda_g^{T}(t;\theta^*,\phi^*,0)\left(\frac{\lambda_g^{T}(t;\theta^*,\phi^*,\psi_T^*)}{\lambda^T(t;\theta^*)}-1\right)  dt\\
		&=& I + II,
	\end{eqnarray*}
	where we have used the notation $\tilde{N}_g^{T}(dt) = N_g^T(dt, \mathbb{X}) - \lambda_g^{T}(t; \theta^*,\phi^*, \psi_T^*)dt$. We derive the limit of the first term following the same path as for the proof of Lemma \ref{Lem 3}. Letting $S_u^T = T^{-1/2}\int_{(0,uT)} \p_\psi \log \lambda_g^{T}(t;\theta^*,\phi^*,0) \tilde{N}_g^{T}(dt)$, we directly have that
	$$ \langle S^T,S^T\rangle_u = T^{-1} \int_0^{uT} \frac{\p_\psi \lambda_g^{T}(t;\theta^*,\phi^*,0) \p_\psi \lambda_g^{T}(t;\theta^*,\phi^*,0)^\T}{\lambda^{T}(t;\theta^*)^2} \lambda_g^{T}(t; \theta^*,\phi^*, \psi_T^*) dt.$$
	By (\ref{deviation local alternative 3}), the boundedness of moments of $\lambda_g^{T}$ and its derivatives and H{\"older's inequality} we easily deduce that
	$$ \langle S^T,S^T\rangle_u = T^{-1} \int_0^{uT} \frac{ \lambda^{(0),1}(t;\theta^*,\phi^*) \p_\psi \lambda^{(0),1}(t;\theta^*,\phi^*)^\T}{\lambda^{(0)}(t;\theta^*)}dt + o_P(1),$$ 
	which converges in probability to $u\Omega$ by Lemma \ref{Lem 3}. Similarly, Lindeberg's condition $\E \sum_{s \leq u} (\Delta S_s^T)^2 \mathbf{1}_{\{ |\Delta S_s^T| > a\}} \to 0$ for any $a >0$ is satisfied, so that by 3.24, Chapter VIII in \cite{jacod2013limit}, we get that $I = S_1^T \to^d \mathcal{N}(0,\Omega)$. Now we derive the limit for $II$. We have for some $\tilde{\gamma}_T \in [0, \hat{\gamma}_T]$
	
	\begin{eqnarray*}
		II = T^{-1} \int_{(0,T)} \lambda^{T}(t;\theta^*,\phi^*)^{-1} \partial_\psi \lambda_g^{T}(t;\theta^*,\phi^*,0) \partial_\psi \lambda_g^{T}(t;\theta^*,\phi^*,\tilde{\gamma}_T)^\T \gamma^*  dt.
	\end{eqnarray*}
	Now, using H{\"older's inequality}, the uniform boundedness of moments of $\lambda_g^{T}$ in $\nu$, and (\ref{deviation local alternative 3}), we deduce as previously that 
	$$ II = T^{-1} \int_{(0,T)} \lambda^{(0)}(t;\theta^*)^{-1}   \lambda^{(0),1}(t;\theta^*,\phi^*)  \lambda^{(0),1}(t;\theta^*,\phi^*)^\T \gamma^* dt + o_P(1),$$
	which, by the proof of Lemma \ref{Lem 5}, tends in probability to the limit $\Omega \gamma^*$. By Slutsky's Lemma, we get the desired convergence in distribution for $T^{-1/2}\partial_\psi l_g(\hat{\nu}_T)$. 
\end{proof}

\begin{lem} \label{lem Fisher local power} 
	Under $H_1^T$, we have 
	$$ T^{-1} \hat{\I}_\psi \to^P \Omega.$$
\end{lem}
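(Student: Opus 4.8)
The plan is to follow the proof of Lemma~\ref{Lem 5} essentially line by line, the only new ingredient being that both the counting process $N^T(dt)=N_g^T(dt,\mathbb{X})$ and the intensity $\lambda^T$ now depend on $T$; Lemma~\ref{lem deviation alternatives} is precisely what lets us transfer the computation to the fixed limiting unmarked process $N^{(0)}$. By polarization it suffices, for every fixed $c\in\R^r$, to prove $T^{-1}c^\T\hat{\I}_\psi c\to^P c^\T\Omega c$, where here
\[
\hat{\I}_\psi=\int_{[0,T]}\lambda^T(t;\hat\theta_T)^{-2}\bigl(\p_\psi\lambda_g^T(t;\hat\nu_T)\bigr)^{\otimes 2}N^T(dt)
\]
and $\Omega=\E[\lambda^\infty(0)^{-1}\p_\psi\lambda^\infty(0)\p_\psi\lambda^\infty(0)^\T]$ is the positive definite matrix of Lemma~\ref{Lem 3}.

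First I would replace $\hat\nu_T$ by $\nu^*=(\theta^*,\phi^*,0)$. Writing $R^T(t;\theta,\phi)=\lambda^T(t;\theta)^{-1}c^\T\p_\psi\lambda_g^T(t;\theta,\phi,0)$, we have $T^{-1}c^\T\hat{\I}_\psi c=T^{-1}\int_{[0,T]}R^T(t;\hat\theta_T,\hat\phi_T)^2\,N^T(dt)$, and $R^T(t;\hat\theta_T,\hat\phi_T)^2-R^T(t;\theta^*,\phi^*)^2$ splits, exactly as in Lemma~\ref{Lem 5}, into a term proportional to $c^\T(\hat\mu_H-\mu_H)$ and a first--order Taylor term in $\theta$ around $\theta^*$. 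The former is $o_P(1)$ because $\hat\mu_H\to^P\mu_H$ (Condition~\ref{Cond: Marks Stat Ergodic} is still in force, since $\hat\phi_T$ is assumed consistent under $H_1^T$) while the accompanying time--averaged integrals are $O_P(1)$ uniformly in $T$ by the uniform moment bounds of Condition~\ref{conditionLocalPower}; the latter is $o_P(1)$ because $\hat\theta_T\to^P\theta^*$ (Lemma~\ref{lem consistencyMLE local power}) and the time average of $\p_\theta(R^T)^2$ along the intermediate point stays bounded in probability, by the same ergodicity and uniform--integrability arguments used in Lemma~\ref{Lem 5} (themselves patterned on \cite[Proof of Lemma 3.12]{ClinetYoshida2017}), with Condition~\ref{conditionLocalPower} supplying the uniform $\mathbb{L}^p$ control of $\p_\theta^i\lambda_g^T$ and $\p_\theta^i\p_\psi\lambda_g^T$. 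This reduces the claim to $T^{-1}c^\T\hat{\I}_\psi(\nu^*)c\to^P c^\T\Omega c$ with $\hat{\I}_\psi(\nu^*)=\int_{[0,T]}\lambda^T(t;\theta^*)^{-2}(\p_\psi\lambda_g^T(t;\nu^*))^{\otimes 2}N^T(dt)$.

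Next I would compensate and then pass to $N^{(0)}$. The predictable compensator of $N^T(dt)$ is $\lambda_g^T(t;\theta^*,\phi^*,\psi_T^*)\,dt$, so
\[
T^{-1}c^\T\hat{\I}_\psi(\nu^*)c=T^{-1}\int_{[0,T]}\lambda^T(t;\theta^*)^{-2}\bigl(c^\T\p_\psi\lambda_g^T(t;\nu^*)\bigr)^2\lambda_g^T(t;\theta^*,\phi^*,\psi_T^*)\,dt+M_T,
\]
where $M_T$ is a martingale which a Davis--Burkholder--Gundy bound together with Condition~\ref{conditionLocalPower} shows to be $O_P(T^{-1/2})$, exactly as in Lemma~\ref{Lem 5}. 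In the remaining integral, Step~1 of Lemma~\ref{lem deviation alternatives} lets one replace $\lambda_g^T(t;\theta^*,\phi^*,\psi_T^*)$ by $\lambda^{(0)}(t;\theta^*)$ at cost $O(T^{-1/2})$, and \eqref{deviation local alternative 3} does the same for $\lambda^T(t;\theta^*)$ and for $\p_\psi\lambda_g^T(t;\nu^*)$, replacing them by $\lambda^{(0)}(t;\theta^*)$ and $\lambda^{(0),1}(t;\theta^*,\phi^*)$; combining these rates with the uniform $\mathbb{L}^p$ bounds and H\"older's inequality gives
\[
T^{-1}c^\T\hat{\I}_\psi(\nu^*)c=T^{-1}\int_{[0,T]}\lambda^{(0)}(t;\theta^*)^{-1}\bigl(c^\T\lambda^{(0),1}(t;\theta^*,\phi^*)\bigr)^2\,dt+o_P(1).
\]
Finally, by the ergodicity argument of the proof of Lemma~\ref{Lem 3} (after passing from the truncated integral to its $(-\infty,t)$ version as in Lemma~\ref{Lem U ergodicity}), the time average above converges in probability to $\E[\lambda^\infty(0)^{-1}(c^\T\p_\psi\lambda^\infty(0))^2]=c^\T\Omega c$, which is the desired conclusion.

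The main obstacle is the combined substitution in the first step: we must pass from $\hat\nu_T$ to $\nu^*$ while simultaneously passing from the $T$--indexed process $N_g^T$ to the fixed process $N^{(0)}$, without ever having a law of large numbers directly for $N_g^T$. This is exactly the situation Lemma~\ref{lem deviation alternatives} is designed to handle, so once those deviation estimates are in hand the remaining work is routine bookkeeping. It is worth noting that, in contrast with the null--hypothesis proof, only consistency of $\hat\theta_T$ (Lemma~\ref{lem consistencyMLE local power}), not $\sqrt{T}$--consistency, is used here, since the factor multiplying $\hat\theta_T-\theta^*$ in the Taylor expansion of $T^{-1}c^\T\hat{\I}_\psi c$ is $O_P(1)$ rather than $O_P(\sqrt{T})$.
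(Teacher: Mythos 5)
Your proposal is correct and follows essentially the same route as the paper: reduce from $\hat\nu_T$ to $\nu^*$ via Lemma \ref{lem consistencyMLE local power} and the argument of Lemma \ref{Lem 5}, then use the deviation bounds of Lemma \ref{lem deviation alternatives} to replace the $T$-indexed quantities by their $N^{(0)}$ counterparts, and conclude by the ergodic limit already established under the null. The only cosmetic difference is ordering — you compensate $N^T(dt)$ first and then swap to $\lambda^{(0)}$, whereas the paper swaps the counting-measure integral to $N^{(0)}(dt\times d\x)$ directly and then cites Lemma \ref{Lem 5} wholesale — but the ingredients and estimates are identical.
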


\begin{proof}
	First, as for Lemma \ref{lem local power}, note that by application of Lemma \ref{lem deviation alternatives}, Lemma \ref{lem consistencyMLE local power}, and following the same path as for the proof of Lemma \ref{Lem 5}, we have 
	$$T^{-1}( \hat{\I}_\psi - \hat{\I}_\psi(\nu^*)) \to^P 0.$$
	Now recall that 
	$$T^{-1}\hat{\I}_\psi(\nu^*)  = T^{-1}\int_{[0,T] \times \mathbb{X}}  \lambda^T(t;\theta^{*})^{-2}(\p_\psi\lambda_g^{T}(t;\nu^{*}))^{\otimes 2} N_g^T(dt \times d\x).$$
	By (\ref{deviation local alternative 1}), (\ref{deviation local alternative 3}), the boundedness of moments of $\lambda_g^{T}$ and its derivatives and H{\"older's inequality} we get 
	$$T^{-1}\hat{\I}_\psi(\nu^*) = T^{-1}\int_{[0,T] \times \mathbb{X}}  \lambda^{(0)}(t;\theta^{*})^{-2}(\lambda^{(0),1}(t;\nu^{*}))^{\otimes 2} N^{(0)}(dt \times d\x) + o_P(1),$$
	and by Lemma \ref{Lem 5}, the right-hand side converges in probability to $\Omega$.
\end{proof} 
\section*{Acknowledgements}
K-A Richards gratefully acknowledges PhD scholarship support by Boronia Capital Pty. Ltd., Sydney, Australia. The research of S. Clinet is supported by a special grant from Keio University. W. T.M. Dunsmuir was supported by travel funds from the Faculty of Sciences, University of New South Wales.
\bibliography{ScoreTestMarksinHawkesSEPP}
\bibliographystyle{chicago}
\end{document}